\documentclass{elsarticle}
\DeclareFontFamily{OML}{script}{}
\DeclareFontShape{OML}{script}{m}{it}
{ <5-20> rsfs10 }{}
\DeclareMathAlphabet{\mathscript}{OML}{script}{m}{it}

\renewcommand{\mathcal}[1]{{\mathscript #1}\hspace{0.2ex}}
\usepackage{color}
\ifx\red\undefined
\newcommand{\red}{\color{red}\huge}

\fi
\pagestyle{plain}
\usepackage[ansinew]{inputenc}
\usepackage[encapsulated]{CJK}

 % gkai looks better, but only for 12 pt or larger
 % gbsn can be used as small as 6 pt

%%%%%%%%%%%%\usepackage{refcheck}
%\usepackage{draftwatermark}\SetWatermarkScale{8}
\usepackage{graphicx,graphics}
\usepackage{pifont}
\usepackage{amsthm}

\usepackage{float}
\usepackage{amscd}
\usepackage{amsmath}
\usepackage{latexsym}
\usepackage{amsfonts}
\usepackage{amssymb}
\usepackage{color}
\usepackage{multicol}
\usepackage{bm}
\allowdisplaybreaks[4]

\textwidth=6.5in \textheight=9.38in
\oddsidemargin=0.0in
\evensidemargin=0.0in
\topmargin=-0.58in
% Trying to be closer to 20 pages (excluding appendix and references)

\newcommand{\re}[1]{\mbox{\rm$($\ref{#1}$)$}}
\newcommand{\dis}{\displaystyle}
\newcommand{\m}{\hspace{1em}}
\newcommand{\mm}{\hspace{2em}}
\newcommand{\p}{\partial}

\newcommand{\Rmnum}[1]{\uppercase\expandafter{\romannumeral #1}}

\renewcommand{\epsilon}{\varepsilon}
\renewcommand{\t}{\widetilde}
%\renewcommand{\theta}{\vartheta}
%\renewcommand{\phi}{\varphi}

%%%%%%%%%%%%%%%%%%%%%%%%%%%%%%%%%%%%%%%%%%%%%%%%%%%%%%%%%%%%%%%%%%%%%%%%%%%%%
\hfuzz=4mm \vfuzz=4mm
\hbadness=20000 \vbadness=20000

%%%%%%%%%%%%%%%%%%%%%%%%%%%%%%%%%%%%%%%%%%%%%%%%%%%%%%%%%%%%%%%%%%%%%%%%%%%%%

\ifx\text\undefined
\newcommand{\text}{\mbox}
\fi
\ifx\operatorname\undefined
\newcommand{\operatorname}{\mathop}
\fi
%%%%%%%%%%%%%%%%%%%%%%%%%%%%%%%%%%%%%%%%%%%%%%%%%%%%%%%%%%%%%%%%%%%%%
%%----

\newcommand\be{\begin{equation}}
\newcommand\ee{\end{equation}}
\newcommand\bea{\begin{eqnarray}}
\newcommand\eea{\end{eqnarray}}
\newcommand\beaa{\begin{eqnarray*}}
\newcommand\eeaa{\end{eqnarray*}}
\newcommand{\dif}{\mathrm{d}}

\newenvironment{eqa}{\begin{equation}%
  \begin{array}{rcl}}{\end{array}\end{equation}}

\newcommand\beqa{\begin{eqa}}
\newcommand\eeqa{\end{eqa}}
%%%----

\numberwithin{equation}{section}
\renewcommand{\theequation}{\arabic{section}.\arabic{equation}}
\renewcommand{\tilde}{\widetilde}
\renewcommand{\hat}{\widehat}
\renewcommand{\bar}{\overline}

\newtheorem{thm}{Theorem}[section]

\newtheorem{lem}[thm]{Lemma}

\newtheorem{rem}{Remark}[section]

\newcommand{\void}[1]{}

%\newcommand\vep{{\varepsilon}}
  %%%%%%%%%% for complex field
  %%%%%%%%%% for real field

%\newcommand\bess{\begin{eqnarray*}}

%\newcommand\eess{\end{eqnarray*}}
%\newcommand\bes{\begin{eqnarray}}
%\newcommand\ees{\end{eqnarray}}

\def\theequation{\arabic{section}.\arabic{equation}}

\setlength\baselineskip{20pt}

\numberwithin{equation}{section}

\begin{document}
\title{Bifurcation for a free boundary problem modeling a small arterial plaque\footnote{\today}
}
\author{Xinyue Evelyn Zhao}\author{Bei Hu}
\address{Department of Applied and Computational Mathematics and Statistics, \\
University of Notre Dame,  Notre Dame, IN 46556, USA, \\ xzhao6@nd.edu, b1hu@nd.edu }

\begin{abstract}
Atherosclerosis, hardening of the arteries, originates from small plaque in the arteries; it is a major cause of disability and premature death in the United States and worldwide. In this paper, we study the bifurcation of a highly nonlinear and highly coupled PDE model describing the growth of arterial plaque in the early stage of atherosclerosis. The model involves LDL and HDL cholesterols, macrophage cells, and foam cells, with the interface separating the plaque and blood flow regions being a free boundary. We establish finite branches of symmetry-breaking stationary solutions which bifurcate from the radially symmetric solution. Since plaque in reality is unlikely to be strictly radially symmetric, our result would be useful to explain the asymmetric shapes of plaque.
\end{abstract}
\begin{keyword} 
free boundary problem, atherosclerosis, bifurcation, symmetry-breaking.
\end{keyword}

\maketitle

\section{Introduction}
%Plaque in the artery can result in serious diseases such as atherosclerosis.
Atherosclerosis, known as an inflammatory disease, is a major cause of disability and premature death in the United States and worldwide. It occurs when fat, cholestrol, and other substances build up in and on the artery walls. These deposits are called plaques, which harden and narrow the arteries over time. The plaque can rupture, triggering a blood clot which restricts blood flow. During this process, a heart attack, stroke, or sudden cardiac death may occur.  Every year about 735,000 Americans have a heart attack, and about 610,000 people die of heart diseases in the United States --- that is 1 in every 4 deaths (cf.,\cite{web4,web3}).

There are several mathematical models that describe the growth of plaque in the arteries (see \cite{CEMR, CMT,FHplaque1,FHHplaque, FHplaque2, mckay2005towards, mukherjee2019reaction}). All of these models recognize the critical role of the ``bad'' cholesterols, low density lipoprotein (LDL), and the ``good'' cholesterols, high density lipoprotein (HDL), in determining whether plaque will grow or shrink. Recently, a free boundary PDE model was proposed in \cite{FHplaque2}, in which a risk-map was generated for any pair values of (LDL, HDL), showing the important influence of LDL and HDL on plaque formation. Later, on the foundation of the model, Hao and Friedman added the impact of reverse cholesterol transport (RCT) in \cite{FHplaque1}. In addition, the existence of a small radially symmetric stationary plaque and its stability condition were theoretically established for a simplified free boundary model in \cite{FHHplaque}. Nevertheless, there is no theoretical work to analyze the bifurcation of plaque model. As the plaque in reality is unlikely to be radially symmetric, it is necessary to investigate the non-radially symmetric solutions. Hence in this paper we shall carry out the bifurcation for the plaque model proposed in  \cite{FHHplaque} (also see \cite[Chapters 7 and 8]{mathbio}).

\begin{figure}[H]
\centering
\includegraphics[height=1.45in]{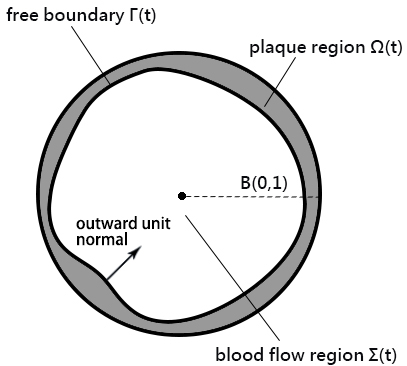}
\caption{\small The cross section of an artery.}
\label{plaque}
\end{figure}

The process of plaque formation is as follows: when a lesion develops in the inner surface of the arterial wall, it enables LDL and HDL to move into the intima and become oxidized by free radicals. Oxidized LDL triggers endothelial cells to secrete chemoattractant proteins that attract macrophage cells (M) from the blood. Macrophage cells
can engulf oxidized LDL, they then become foam cells (F), and the accumulation of foam cells results in the formation of plaque. The effect of oxidized LDL on plaque growth can be reduced by the good cholesterols, HDL: HDL can remove harmful bad cholesterol out from the foam cells and revert foam cells back into macrophage cells; moreover, HDL also competes with LDL on free radicals, decreasing the amount of radicals that are available to oxidize LDL. In the model, we let
\begin{equation*}
\begin{split}
&L\text{ = concentration of LDL,}\hspace{5em} H\text{ = concentration of HDL,}\\
&M\text{ = density of macrophage cells,}\hspace{2em} F\text{ = density of foam cells.}
\end{split}
\end{equation*}
Assuming the artery is a very long circular cylinder with radius 1 (after normalization), we consider a circular cross section of the artery. As can be seen in Fig. \ref{plaque}, the cross section is divided into two regions: blood flow region $\Sigma(t)$ and plaque region $\Omega(t)$, with a moving boundary $\Gamma(t)$ separating these two regions (since plaque can either grow or shrink). The variables $L,H,M,F$ satisfy the following equations in the plaque region $\{\Omega(t), t>0\}$ (cf., \cite[Chapters 7 and 8]{mathbio} and \cite{FHHplaque}):
\begin{gather}
\frac{\partial L}{\partial t} - \Delta L = - k_1 \frac{ML}{K_1 + L} - \rho_1 L,\label{p1}\\
\frac{\partial H}{\partial t} - \Delta H =- k_2 \frac{HF}{K_2 + F} - \rho_2 H,\label{p2}\\
\frac{\partial M}{\partial t} - D\Delta M + \nabla\cdot(M \vec{v}) = -k_1\frac{ML}{K_1+L} + k_2\frac{HF}{K_2+F} + \lambda\frac{ML}{\gamma+H}-\rho_3 M,\label{p3}\\
\frac{\partial F}{\partial t} - D\Delta F + \nabla\cdot(F \vec{v}) = k_1\frac{ML}{K_1+L}-k_2\frac{HF}{K_2+F}-\rho_4 F,\label{p4}
\end{gather}
where $\rho_1$, $\rho_2$, $\rho_3$ and $\rho_4$ denote the natural death rate of $L$, $H$, $M$, and $F$, respectively. In equations \re{p1} --- \re{p4}, the aforementioned transitions between macrophage cells ($M$) and foam cells ($F$) are included: $k_1\frac{ML}{K_1 + L}$ accounts for the fact that $M$ becomes foam cell by combining with $L$, $k_2\frac{HF}{K_2+F}$ describes the removal of foam cell by $H$, and the extra term $\lambda\frac{ML}{\gamma+H}$ in equation \re{p3} models the  effects that oxidized $L$ attracts $M$ while  $H$
decreases this impact by competing for free radicals.

We assume that the density of cells in the plaque is approximately a constant, and take
\begin{equation}
    M+F \equiv M_0\hspace{2em}\text{in } \Omega(t).\label{p5}
\end{equation}
Since there are cells migrating into and out of the plaque, the total number of cells keeps changing and, under the assumption \re{p5}, cells are continuously ``pushing'' each other. This gives rise to an internal pressure among the cells which is associated with the velocity $\vec{v}$ in \re{p3} and \re{p4}. 
We further assume that the plaque  texture is of a porous medium type, and invoke Darcy's law
\begin{equation}
    \vec{v} = -\nabla p \mm (\mbox{the proportional constant is normalized to 1}),\label{p6}
\end{equation}
where $p$ is the internal pressure {\em relative to the outside pressure} (and therefore can admit positive or negative sign). Combining \re{p3} -- \re{p6}, we derive
\begin{equation}
    -\Delta p = \frac{1}{M_0}\Big[\lambda\frac{(M_0-F)L}{\gamma+H}-\rho_3(M_0-F) - \rho_4 F\Big].\label{peqn}
\end{equation}
Due to the assumption \re{p5}, we can decrease the number of equations by 1, and replace $M$ by $M_0 - F$ in \re{p1} -- \re{p4}, hence we shall have 4 PDEs, for $L$, $H$, $F$ and $p$, respectively. In particular, combining with \re{peqn}, we write the equation for $F$ in the following form
\begin{equation}
    \label{Feqn}
    \frac{\p F}{\p t}-D\Delta F - \nabla F\cdot \nabla p = k_1\frac{(M_0-F)L}{K_1+L}-k_2\frac{HF}{K_2+F}-\lambda\frac{F(M_0-F)L}{M_0(\gamma+H)}+(\rho_3-\rho_4)\frac{(M_0-F)F}{M_0}.
\end{equation}

We now turn to the boundary conditions. We assume no flux condition on the blood vessel wall ($r=1$) for all variables (no exchange through the blood vessel): 
\begin{equation}
\frac{\partial L}{\partial r} = \frac{\partial H}{\partial r} = \frac{\partial F}{\partial r} =  \frac{\partial p}{\partial r} = 0 \hspace{2em}\text{at } r=1;\label{p7}
\end{equation}
while on the free boundary $\Gamma(t)$, we take
\begin{eqnarray}
&\frac{\partial L}{\partial {\bf n}} + \beta_1 (L-L_0) = 0\hspace{2em} &\text{on }\Gamma(t),\label{p8}\\
&\frac{\partial H}{\partial {\bf n}}+\beta_1 (H-H_0) = 0\hspace{2em}&\text{on }\Gamma(t),\label{p9}\\
%&\frac{\partial M}{\partial {\bf n}} + \beta_2(M-M_0) = 0\hspace{2em}&\text{on }\Gamma(t),\label{p10}\\
&\frac{\partial F}{\partial {\bf n}}+\beta_2 F = 0\hspace{2em}&\text{on }\Gamma(t),\label{p11}\\
&p = \kappa \hspace{2em} &\text{on }\Gamma(t),\label{p12}
\end{eqnarray}
where ${\bf n}$ is the outward unit normal for $\Gamma(t)$ which points inward towards the blood region (as shown in Fig. \ref{plaque}), and $\kappa$ is the corresponding mean curvature in the direction of ${\bf n}$ (i.e., $\kappa=-\frac{1}{R(t)}$ if $\Gamma(t)=\{r=R(t)\}$). The cell-to-cell adhesiveness constant in front of $\kappa$ is normalized to 1. The flux boundary conditions \re{p8} and \re{p9} are based on the fact that the concentrations of $L$ and $H$ in the blood are $L_0$ and $H_0$, respectively; and the meaning of \re{p11} is similar: there are, of course, no foam cells in the blood.

Furthermore, we assume that the velocity is continuous up to the boundary, so that the free boundary $\Gamma(t)$ moves in the outward normal direction ${\bf n}$ with velocity $\vec{v}$; based on \re{p6}, the normal velocity of the free boundary is defined by
\begin{equation}
    \label{p13}
    V_n = -\frac{\partial p}{\partial {\bf n}}\hspace{2em}\text{on }\Gamma(t).
\end{equation}

In \cite{FHHplaque}, Friedman et al. analyzed the system \re{p1} -- \re{p13} in the radially symmetric case and established the existence of a unique radially symmetric steady state solution in a ring-region $1-\epsilon < r <1$ with $\epsilon$ being small. It is, however, unreasonable to assume plaque is of strictly radially symmetric shape, hence we'd like to investigate the symmetric-breaking bifurcation for the system. To do that, we study the corresponding stationary problem of \re{p1} -- \re{p13}:
\begin{eqnarray}
&- \Delta L = - k_1 \frac{(M_0-F)L}{K_1 + L} - \rho_1 L\hspace{1em} &\text{in } \Omega,\label{e1}\\
&- \Delta H =- k_2 \frac{HF}{K_2 + F} - \rho_2 H\hspace{1em} &\text{in } \Omega,\label{e2}\\
&-D\Delta F - \nabla F\cdot \nabla p = k_1\frac{(M_0-F)L}{K_1+L}-k_2\frac{HF}{K_2+F}-\lambda\frac{F(M_0-F)L}{M_0(\gamma+H)}+(\rho_3-\rho_4)\frac{(M_0-F)F}{M_0}\hspace{0.5em} &\text{in }\Omega,\label{e3}\\
&-\Delta p = \frac{1}{M_0}\Big[\lambda\frac{(M_0-F)L}{\gamma+H}-\rho_3(M_0-F) - \rho_4 F\Big]\hspace{1em} &\text{in }\Omega,\label{e4}\\
&\frac{\partial L}{\partial r} = \frac{\partial H}{\partial r} = \frac{\partial F}{\partial r} =  \frac{\partial p}{\partial r} = 0 \hspace{2em}&  r=1,\label{e5}\\
&\frac{\partial L}{\partial {\bf n}} + \beta_1 (L-L_0) = 0, \hspace{1em}  \frac{\partial H}{\partial {\bf n}} + \beta_1 (H-H_0)=0, \hspace{1em} \frac{\partial F}{\partial {\bf n}} + \beta_2 F = 0&\text{on }\Gamma,\label{e6}\\
&p = \kappa &\text{on }\Gamma,\label{e7}\\
&V_n = -\frac{\partial p}{\partial {\bf n}}=0 \hspace{2em}&\text{on }\Gamma.\label{e8}
\end{eqnarray}

In recent years, considerable research works have been carried out on bifurcation analysis for various tumor growth models (see \cite{CE3, FFBessel, FH3, FR2, Hao1, Hao2, angio1, Fengjie, Hongjing, Zejia, W, WZ2,zhao2,song}), where the Crandall-Rabinowitz theorem (will be mentioned in Section 2) is a primary tool. Compared with tumor growth models, our system \re{e1} -- \re{e8} contains more equations which are highly nonlinear and coupled together, therefore it is a formidable task to analyze our model. Besides, the absence of an explicit stationary solution presents a big challenge to verify the Crandall-Rabinowitz theorem. Even though the problems in \cite{zhao, zhao2} do not admit explicit representations, the structure of the problem studied here is very different. To overcome it, we establish a lot of sharp estimates in Section 4. To the best of our knowledge, this is the first paper on the study of bifurcation for the system \re{p1} -- \re{p13}. Our main result is stated as follows:

For convenience we shall use $\mu= \frac1\epsilon[\lambda L_0-\rho_3(\gamma+H_0)]$ as our bifurcation parameter. We will
keep all parameters fixed except $L_0$ and $\rho_4$, and vary $\mu$
by changing $L_0$.

\begin{thm}\label{result} 
For each integer $n\ge 2$, we can find a small $E>0$ and for each $0<\epsilon<E$, there exists a unique  $\mu_n=(\gamma+H_0)n^2(1-n^2)+ O(n^5 \epsilon)$
such that if $\mu_n > \mu_c$ ($\mu_c$ is defined in (\ref{muc})), then $\mu=\mu_n$ is a bifurcation point of the symmetry-breaking stationary solution of the system \re{e1} -- \re{e8}. Moreover, the free boundary of this bifurcation solution is of the form
\begin{equation*}
    r = 1 - \epsilon + \tau \cos(n\theta) + o(\tau), \hspace{1em} \text{where }\hspace{1em} |\tau|\ll \epsilon.
\end{equation*}
\end{thm}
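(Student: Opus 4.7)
The plan is to apply the Crandall-Rabinowitz theorem to an abstract operator equation $\mathcal{F}(R,\mu) = 0$, where $R(\theta)$ is a small $2\pi$-periodic perturbation of the radial free boundary $r = 1 - \epsilon$. Using a Hanzawa-type diffeomorphism, I would pull the interior equations \re{e1}--\re{e4} and boundary conditions \re{e5}--\re{e7} back to the fixed annulus $\{1-\epsilon < r < 1\}$, and define $\mathcal{F}$ to be the remaining free-boundary quantity $\partial p / \partial {\bf n}$ evaluated on $\Gamma_R$, cf.\ \re{e8}. Working in Hölder spaces of periodic functions, e.g.\ $R \in C^{4+\alpha}(\mathbb{S}^1)$ and $\mathcal{F}(R,\mu) \in C^{1+\alpha}(\mathbb{S}^1)$, zeros of $\mathcal{F}$ locally parametrize stationary solutions of \re{e1}--\re{e8}. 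To compute the Fréchet derivative $\mathcal{F}_R(0,\mu)$, I would linearize the pulled-back system at the radial solution $(L_s,H_s,F_s,p_s)$ produced in \cite{FHHplaque}; since the construction is rotation invariant, a Fourier decomposition in $\theta$ reduces the problem to a coupled ODE system in $r\in(1-\epsilon,1)$ for mode-$n$ perturbations $(L_n,H_n,F_n,p_n)$, with boundary data read off from linearizing \re{e5}--\re{e7}.

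A nontrivial mode-$n$ kernel element exists precisely when a scalar compatibility relation $\Phi_n(\mu,\epsilon) = 0$ holds. Since $(L_s,H_s,F_s,p_s)$ is not explicit, the core technical step is an $\epsilon$-expansion of every ingredient; this is exactly where the sharp estimates of Section~4 enter. At leading order the thin annulus degenerates to a flat strip of width $\epsilon$ and the mode-$n$ ODE system becomes explicitly solvable: the linearized mean curvature of $r = 1 - \epsilon + \tau\cos(n\theta)$ contributes a term proportional to $(1-n^2)$ in the boundary datum for $p_n$, and balancing it against the pressure source in \re{e4} driven by $F_n$ through the $(\gamma+H)^{-1}$-term forces $\mu_n = (\gamma+H_0)n^2(1-n^2) + O(n^5\epsilon)$. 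Uniqueness of $\mu_n(\epsilon)$ for small $\epsilon$ then follows from the implicit function theorem applied to $\Phi_n(\cdot,\epsilon)$, once $\partial_\mu \Phi_n \neq 0$ is checked at leading order.

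To close the argument via Crandall-Rabinowitz at $\mu = \mu_n$ I would verify: (i) the kernel of $\mathcal{F}_R(0,\mu_n)$ is one-dimensional, spanned by $\cos(n\theta)$, because $\Phi_m(\mu_n,\epsilon) \neq 0$ for every $m \neq n$ once $\epsilon$ is small (the leading-order quartic $m^2(1-m^2)$ separates distinct modes, and the assumption $\mu_n > \mu_c$ rules out accidental coincidences with lower modes); (ii) the range has codimension one, by Fredholm theory applied to the pulled-back elliptic system; (iii) the transversality condition $\mathcal{F}_{R\mu}(0,\mu_n)[\cos(n\theta)] \notin \text{Range}\,\mathcal{F}_R(0,\mu_n)$ reduces at leading order to $\partial_\mu \Phi_n \neq 0$, the same non-vanishing used above. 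Crandall-Rabinowitz then produces the branch with free boundary $r = 1-\epsilon + \tau\cos(n\theta) + o(\tau)$.

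The hardest part will be the uniform-in-$n$ asymptotic analysis in $\epsilon$: because no explicit formula for $(L_s,H_s,F_s,p_s)$ is available, every coefficient of $\mathcal{F}_R(0,\mu)$ and of $\Phi_n(\mu,\epsilon)$ must be tracked to the required order in $\epsilon$ with remainders controlled uniformly in $n$, so that both the stated error $O(n^5\epsilon)$ and the mode-decoupling argument stand on rigorous ground. A secondary difficulty is the transport term $-\nabla F\cdot\nabla p$ in \re{e3}, which couples $F$ and $p$ in the linearization and prevents any equation-by-equation reduction; the mode-$n$ system must be solved as a genuinely coupled system, which is why the sharp estimates of Section~4 are both necessary and delicate.
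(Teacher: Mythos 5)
Your proposal matches the paper's proof in all essential structure: reducing the free-boundary problem to $\mathcal{F}(R,\mu)=0$ with $\mathcal{F}$ defined through $\partial p/\partial\mathbf{n}$ on the perturbed boundary, decomposing in Fourier modes $\cos(n\theta)$, reducing the kernel condition to a scalar relation in $\mu$, expanding in $\epsilon$ (using sharp $L^\infty$ estimates for the non-explicit radial solution and the linearization), solving $U(\mu,\epsilon)=0$ by the implicit function theorem to get $\mu_n$, and checking the four Crandall–Rabinowitz hypotheses with mode-separation estimates that are uniform in $m$. You also correctly single out the two genuine difficulties the paper spends most of its effort on — the lack of an explicit radial steady state, forcing careful $\epsilon$-asymptotics with $n$-uniform error bounds, and the transport coupling $-\nabla F\cdot\nabla p$ forcing the linearized system to be treated as fully coupled.

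One minor conceptual slip: you suggest that the hypothesis $\mu_n>\mu_c$ is invoked to ``rule out accidental coincidences with lower modes.'' In the paper, $\mu_n>\mu_c$ is used only to guarantee that the radially symmetric steady state exists (Theorem~\ref{thm21}), i.e., to establish the trivial branch $\mathcal{F}(0,\mu_n)=0$ required by condition (1) of Crandall–Rabinowitz. Mode separation ($W(m)\neq0$ for $m\neq n$) is proved entirely from the strict monotonicity of $m^2(1-m^2)$ combined with the $(n^2+1)\epsilon$-sized remainder bounds $J_1,J_2^m$, split into the three cases $m\epsilon\le\tfrac12$, $m\epsilon>\tfrac12$, and $m$ bounded. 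Similarly, the leading-order cancellation that produces $\tfrac{\mu}{\gamma+H_0}$ in the source for $p_1^n$ comes from a combination of $L_1^n$, $H_1^n$, and $F_1^n$ (via the identities $L_1^n\approx\mu/\lambda-L_*^1$, $H_1^n\approx -H_*^1$, $F_1^n\approx -F_*^1$), not from $F_1^n$ alone. Neither of these affects the soundness of the overall plan.
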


\begin{rem}
Unlike tumor protrusions which are usually unstable and may cause metastases, the protrusions of plaques are towards the blood region with limited {\bf spatial freedom}. As $n$ gets bigger, $\mu_n$ becomes negative
with larger absolute value. By the definition of $\mu_n$, this means
that the concentration of the {\bf good cholesterol (HDL)} must be substantially larger than the concentration of the {\bf bad cholesterol (LDL)} for the bifurcation to occur. The more protrusions, the larger $H_0$ over $L_0$ will be required to balance the protrusion forces. Based on the stability results from \cite{FHHplaque}, it is likely to have some stable bifurcation branches.
\end{rem}

 The structure of this paper is as follows. In Section 2, we give some preliminaries; in section 3, we rigorously justify some expansions which will be needed in applying the Crandall-Rabinowitz theorem; and then we carry out our proof of Theorem \ref{result} in Section 4. Some well-known results 
are collected in the Appendix.

\section{Radially symmetric stationary solution}

\subsection{A small radially symmetric stationary solution}
We consider a radially symmetric stationary solution in a small ring-region $\Omega_*=\{1-\epsilon < r < 1\}$, and denote the solution by $(L_*, H_*, F_*, p_*)$. Based on \re{e1} -- \re{e8}, the solution satisfies
\begin{eqnarray}
&- \Delta L_* = - k_1 \frac{(M_0-F_*)L_*}{K_1 + L_*} - \rho_1 L_* &\text{in }\Omega_*,\label{r1}\\
&- \Delta H_* =- k_2 \frac{H_*F_*}{K_2 + F_*} - \rho_2 H_* &\text{in }\Omega_*,\label{r2}\\
&\label{r3}
    -D\Delta F_* - \frac{\p F_*}{\p r}\frac{\p p_*}{\p r}= k_1\frac{(M_0-F_*)L_*}{K_1+L_*}-k_2\frac{H_*F_*}{K_2+F_*}-\lambda\frac{F_*(M_0-F_*)L_*}{M_0(\gamma+H_*)}+(\rho_3-\rho_4)\frac{(M_0-F_*)F_*}{M_0} &\text{in }\Omega_*,\\
&-\Delta p_* = \frac{1}{M_0}\Big[\lambda\frac{(M_0-F_*)L_*}{\gamma+H_*}-\rho_3(M_0-F_*) - \rho_4 F_*\Big]\hspace{1em} &\text{in }\Omega_*,\label{r4}\\
&\frac{\partial L_*}{\partial r} = \frac{\partial H_*}{\partial r} = \frac{\partial F_*}{\partial r} =  \frac{\partial p_*}{\partial r} = 0, \hspace{2em}& r=1,\label{r5}\\
&\m -\frac{\partial L_*}{\partial r} + \beta_1 (L_*-L_0) = 0, \hspace{1em}  -\frac{\partial H_*}{\partial r} + \beta_1 (H_*-H_0)=0, \hspace{1em} -\frac{\partial F_*}{\partial r} + \beta_2 F_* = 0,\hspace{1em}&r=1-\epsilon,\label{r6}\\
&p_* = -\frac{1}{1-\epsilon},\hspace{1em}  &r=1-\epsilon,\label{r7}\\
&  \frac{\partial p_*}{\partial r}=0, &r=1-\epsilon.\label{r7a}
\end{eqnarray}
Viewing $\frac{\p p_*}{\p r}$ as $-v$, and following \textit{Theorem 3.1} in \cite{FHHplaque}, for every $H_0=O(1)$ and $\epsilon$ small, we can find 
a unique $L_0$ and a constant $K_*$, such that there is a unique classical solution to the above system with $|\lambda L_0 - 
\rho_3 (H_0+\gamma)| < K_* \epsilon$. The existence theorem for radially symmetric solution of this form, however, {\em is not good enough} for the
bifurcation theorem.

There are many parameters in our system. We need to choose one as the bifurcation parameter. We let $\mu= \frac1\epsilon [\lambda L_0 - \rho_3 (\gamma+ H_0)]$ to be our bifurcation parameter. We can vary $\mu$ by,
say, keeping $\lambda, \gamma, \rho_3, H_0$ and $\epsilon$ fixed
while changing $L_0$ only. For simplicity, we shall assume all the parameters are fixed and of order $O(1)$ except $L_0$ and $\rho_4$. With these settings, 
varying $L_0$ corresponds to varying $\mu$. In the rest of this paper, \textit{we shall thus use
$\mu$ and $\rho_4$ as our parameters}.

Here is our existence theorem for the radially symmetric solutions.
We define
\be\label{muc}
 \mu_c = \frac{ \rho_3}{\beta_1}\Big\{  
 (\gamma + H_0) \Big( \frac{\lambda k_1 M_0  }{\lambda K_1+\rho_3(\gamma+H_0)}+ {\rho_1 } \Big) -\rho_2 H_0 \Big\}.
\ee
\begin{thm}\label{thm21}
 For every $\mu^*>\mu_c$ and $\mu_c<\mu<\mu^*$, we can find a small $\epsilon^*>0$, 
 and for each $0<\epsilon<\epsilon^*$, there exists a unique $\rho_4$ such
 that the system \re{r1} -- \re{r7a} admits a unique solution $(L_*, H_*, F_*, p_*)$. 
\end{thm}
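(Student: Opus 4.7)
The plan is to use the thin-domain structure $\Omega_*=\{1-\epsilon<r<1\}$ to reduce the system to an asymptotic problem on a unit interval, then view the overdetermined pressure boundary conditions as an equation that determines $\rho_4$ implicitly from $\mu$. First I would introduce the rescaled radial variable $t=(1-r)/\epsilon\in[0,1]$, so that derivatives pick up factors of $1/\epsilon$ and the Laplacian is dominated by $(1/\epsilon^2)\partial_t^2$. In these coordinates, I would seek formal expansions
\begin{equation*}
L_*=L_0+\epsilon L^{(1)}(t)+O(\epsilon^2),\quad H_*=H_0+\epsilon H^{(1)}(t)+O(\epsilon^2),\quad F_*=\epsilon F^{(1)}(t)+O(\epsilon^2),\quad p_*=p^{(0)}+\epsilon p^{(1)}(t)+O(\epsilon^2),
\end{equation*}
where the leading orders are forced by the Robin conditions on $L_*,H_*,F_*$ at $r=1-\epsilon$ and the Neumann conditions on all four unknowns at $r=1$, together with the fact that no foam cells can be sustained when the plaque has zero thickness.

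Once this ansatz is in hand, equations \re{r1}--\re{r3} together with the Robin/Neumann data for $L_*,H_*,F_*$ reduce, at each order in $\epsilon$, to a decoupled sequence of linear two-point ODE problems on $[0,1]$ whose explicit solution is easy to write down. The bifurcation parameter $\mu$ enters through the identity $\lambda L_0=\rho_3(\gamma+H_0)+\mu\epsilon$, while $\rho_4$ enters only through equations \re{r3} and \re{r4}. Treating $(\mu,\rho_4)$ as free, this procedure constructs a smooth family of solutions to \re{r1}--\re{r6} together with the Neumann conditions on $p_*$ at both boundary circles, leaving the additive constant in $p_*$ undetermined.

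The remaining two conditions on $p_*$, namely the Dirichlet condition \re{r7} and the stationary condition \re{r7a}, are then imposed. Integrating \re{r4} over $\Omega_*$ and using the divergence theorem with $\partial_r p_*=0$ on both boundary components yields the compatibility condition
\begin{equation*}
\int_{\Omega_*}\Big[\lambda\frac{(M_0-F_*)L_*}{\gamma+H_*}-\rho_3(M_0-F_*)-\rho_4 F_*\Big]\,dA=0,
\end{equation*}
which is a single scalar equation for $\rho_4$ in terms of $\mu$ and $\epsilon$; the Dirichlet condition \re{r7} then pins down the additive constant in $p_*$. A direct leading-order computation brings the compatibility equation into the form $a(\mu,\epsilon)\rho_4=b(\mu,\epsilon)$ with $a(\mu,0)\neq 0$, and shows that the threshold $\mu>\mu_c$ defined in \re{muc} is precisely the condition making the resulting $\rho_4$ and the foam-cell density $F_*$ strictly positive, hence physically admissible.

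Finally, to upgrade the formal expansion to a genuine classical solution, I would set up a contraction argument (or equivalently an implicit function theorem) in a suitably weighted H\"older space on the fixed rescaled annulus, with the leading-order profiles as the base point. The main obstacle is establishing uniform-in-$\epsilon$ invertibility: the thin-domain scaling makes the natural norms degenerate as $\epsilon\to 0$, and the overdetermined structure demands that the Fr\'echet derivative of the compatibility map with respect to $\rho_4$ be bounded away from zero uniformly in $\epsilon$. Both difficulties are handled by the sharp estimates alluded to in Section~4 together with the quantitative bound $\mu_c<\mu<\mu^*$, which prevents the leading coefficient $a(\mu,\epsilon)$ from degenerating on compact subsets. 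Once uniform invertibility is secured, the implicit function theorem delivers the unique pair $(\rho_4,(L_*,H_*,F_*,p_*))$ and completes the proof.
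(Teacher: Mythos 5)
Your proposal follows essentially the same route as the paper: both view \re{r7a} as an overdetermination, reduce it (via the divergence theorem applied to the pressure equation with Neumann data) to the scalar compatibility condition $\int_{1-\epsilon}^1\big[\lambda\tfrac{(M_0-F_*)L_*}{\gamma+H_*}-\rho_3(M_0-F_*)-\rho_4 F_*\big]r\,\dif r=0$, and then expand this in $\epsilon$, observe the cancellation of the $O(1)$ contribution, and extract the leading relation between $\mu-\mu_c$ and $\rho_4 F_*^1$ to determine $\rho_4$. The only differences are cosmetic — the paper borrows the base existence from Lemma~3.1 of \cite{FHHplaque} and closes with the intermediate value theorem together with the monotonicity $\partial\Phi/\partial\rho_4<0$, whereas you rebuild the base solution via a thin-domain rescaling and close with the implicit function theorem; also note that the full compatibility map is not literally linear in $\rho_4$ (the solution components depend on $\rho_4$), but this does not affect the argument since the required nondegeneracy of $\partial\Phi/\partial\rho_4$ is exactly what you invoke.
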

\begin{proof}
The proof is similar to that in \cite{FHHplaque} but much more involved.
Following \textit{Lemma 3.1} of \cite{FHHplaque}, for all parameters of order $O(1)$,
the system \re{r1} -- \re{r7} admits a unique solution for small $\epsilon$.
In order for this solution to be the solution of our problem, we need to
verify \re{r7a}. We shall do so by keeping all parameters fixed except 
$\rho_4$.

Note that \re{r7a} is equivalent to 
\be \label{fb}
\Phi(\rho_4,\epsilon, \mu) = 0, \mm  \text{where }
\Phi(\rho_4,\epsilon, \mu) \triangleq \int_{1-\epsilon}^1 \Big[\lambda\frac{(M_0-F_*)L_*}{\gamma+H_*}-\rho_3(M_0-F_*) - \rho_4 F_*\Big] r \dif r.
\ee
As in \cite[(3.29)--(3.32)]{FHHplaque}, recalling also 
(see Appendix \ref{app-super})
$\xi(r) =  \frac{1-r^2}4 + \frac12\log r =O(\epsilon^2)$
(the formulas in  \cite[(3.23)--(3.25), (3.26)--(3.28), (3.29)]{FHHplaque}
are all missing minus signs; as a result, the corrected 
\cite[(3.29)]{FHHplaque} should read:
\renewcommand{\theequation}{\bf [9,(3.29)]}
\be
L_*(r)  =  L_0 -  
 \Big( \frac{k_1 M_0 L_0}{K_1+L_0}+\rho_1 L_0\Big)\Big(
 \xi(r)+\frac\epsilon{\beta_1}\Big)+ \text{Const}\cdot \epsilon^2
 +O(\epsilon^3),
\ee
\addtocounter{equation}{-1}
\renewcommand{\theequation}{\thesection.\arabic{equation}}
\hspace{-1.24em} and \cite[(3.30),(3.31)]{FHHplaque} should be corrected in a similar manner; this correction does not change the proof in \cite{FHHplaque}),
 we can establish the following:
\begin{eqnarray}
 \label{Ls} L_*(r) & = & L_0 - \frac{\epsilon}{\beta_1}
 \Big( \frac{k_1 M_0 L_0}{K_1+L_0}+\rho_1 L_0\Big) + O(\epsilon^2) \\
 & = & \frac{\rho_3(\gamma+H_0)}\lambda + \epsilon \Big[ \frac\mu\lambda  \nonumber
  - \frac{\rho_3(\gamma+H_0)}{\beta_1}
 \Big( \frac{k_1 M_0  }{\lambda K_1+\rho_3(\gamma+H_0)}+\frac{\rho_1 }\lambda  \Big)\Big]+  O(\epsilon^2) \\
 & \triangleq & \frac{\rho_3(\gamma+H_0)}\lambda +\epsilon L_*^1 + O(\epsilon^2),\nonumber  \\
 \label{Hs} H_*(r) & = & H_0  -   \epsilon  \frac{\rho_2 H_0 }{\beta_1} +  O(\epsilon^2)
\; \triangleq \; H_0 + \epsilon H_*^1 + O(\epsilon^2),\\
 \label{Fs} F_*(r) & = &  \frac\epsilon{\beta_2}  \frac{k_1 M_0 L_0}{D(K_1+L_0)}   +  O(\epsilon^2) \\
 & = & \epsilon \; \frac{\rho_3(\gamma+H_0)}{\beta_2 D} \;
 \frac{k_1 M_0  }{\lambda K_1+\rho_3(\gamma+H_0)}+ O(\epsilon^2) 
\; \triangleq \; \epsilon F_*^1 + O(\epsilon^2).\nonumber
\end{eqnarray}
Substituting these expressions into the formula \re{fb} for $\Phi$,
we find that the $O(1)$ terms in the bracket $[\cdots]$  cancel out, and  
\be
 \Phi(\rho_4,\epsilon, \mu) = \int_{1-\epsilon}^1 \bigg\{ \epsilon \Big[
 \frac{M_0(\lambda L_*^1-\rho_3 H_*^1)}{\gamma+H_0} -\rho_4 F_*^1\Big] + O(\epsilon^2) \bigg\} rdr.
\ee
A direct computation shows that 
\bea\label{mum}
 \frac{M_0(\lambda L_*^1-\rho_3 H_*^1)}{\gamma+H_0}  
 & = & \frac{M_0}{\gamma + H_0} (\mu - \mu_c).
\eea
It follows that, for small $\epsilon$, 
$ \Phi(0,\epsilon, \mu) > 0 $ and  $ \Phi(\rho_4,\epsilon, \mu)<0$
for large $\rho_4$, hence there must be a value of $\rho_4$ on which 
$ \Phi(\rho_4,\epsilon, \mu) = 0$.

To finish the proof, it suffices to show $\frac\p{\p \rho_4} \Phi(\rho_4,\epsilon) < 0$;
the proof is similar to that 
of \cite[Theorem 3.1]{FHHplaque} in the second part, but is actually
a little easier.
\end{proof}

\begin{rem}
By ODE theories, the solution $(L_*, H_*, F_*, p_*)$ can be extended to the bigger region $\Omega_{2\epsilon}=\{1-2\epsilon <r <1\}$ while maintaining $C^\infty$ regularity. For notational convenience, we still use $(L_*, H_*, F_*, p_*)$ to denote the extended solution. 
\end{rem}

\begin{rem} The case $\mu_c<0$ is certainly true within reasonable parameter range. 
\end{rem}

Following the above proof, we also derive
\begin{lem}
Let $\mu > \mu_c$. Then
\bea
  \rho_4 & = & \frac1{F_*^1}\;  \frac{M_0}{\gamma + H_0} (\mu-\mu_c) + O(\epsilon) \;=\;\frac{\beta_2 D [ \lambda K_1 + \rho_3(\gamma+ H_0)]}{\rho_3 k_1(\gamma+H_0)^2} (\mu-\mu_c) + O(\epsilon),\label{rho4}\\
  \frac{\p \rho_4}{\p \mu} & = & \frac1{F_*^1}\;  \frac{M_0}{\gamma + H_0} + O(\epsilon)  \;=\;
  \frac{\beta_2 D [ \lambda K_1 + \rho_3(\gamma+ H_0)]}{\rho_3 k_1(\gamma+H_0)^2}+ O(\epsilon).\label{rho4D}
\eea
\end{lem}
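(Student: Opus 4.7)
The plan is to extract both formulas directly from the expression for $\Phi(\rho_4,\epsilon,\mu)$ that was already set up in the proof of Theorem~\ref{thm21}. Recall from that proof,
\[
 \Phi(\rho_4,\epsilon, \mu) \;=\; \int_{1-\epsilon}^1 \bigg\{ \epsilon \Big[
 \frac{M_0(\lambda L_*^1-\rho_3 H_*^1)}{\gamma+H_0} -\rho_4 F_*^1\Big] + O(\epsilon^2) \bigg\} r\,dr,
\]
and, by \re{mum}, the bracket equals $\epsilon\bigl[\tfrac{M_0}{\gamma+H_0}(\mu-\mu_c)-\rho_4 F_*^1\bigr]+O(\epsilon^2)$. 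Since $\int_{1-\epsilon}^1 r\,dr=\epsilon-\tfrac{\epsilon^2}{2}$, factoring this common $\epsilon^2(1+O(\epsilon))$ and imposing $\Phi=0$ yields, after cancellation,
\[
 \frac{M_0}{\gamma+H_0}(\mu-\mu_c)-\rho_4 F_*^1 \;=\; O(\epsilon),
\]
which is exactly \re{rho4} after dividing by $F_*^1$. Substituting the explicit value of $F_*^1$ from \re{Fs} and simplifying gives the closed-form constant $\tfrac{\beta_2 D[\lambda K_1+\rho_3(\gamma+H_0)]}{\rho_3 k_1(\gamma+H_0)^2}$.

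For \re{rho4D} I would appeal to the implicit function theorem applied to $\Phi(\rho_4,\epsilon,\mu)=0$. The identity
\[
 \frac{\p \rho_4}{\p \mu} \;=\; -\frac{\p \Phi/\p \mu}{\p \Phi/\p \rho_4}
\]
requires nonvanishing of $\p \Phi/\p \rho_4$, which is precisely the monotonicity statement established at the end of the proof of Theorem~\ref{thm21}. Differentiating term by term in the expression for $\Phi$, the $\mu$-dependence enters only through $L_*^1$ and the higher-order corrections, so at leading order $\p \Phi/\p\mu=\epsilon(\epsilon-\tfrac{\epsilon^2}{2})\tfrac{M_0}{\gamma+H_0}+O(\epsilon^3)$, while $\p \Phi/\p\rho_4=-\epsilon(\epsilon-\tfrac{\epsilon^2}{2})F_*^1+O(\epsilon^3)$. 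Taking the ratio produces $\tfrac{1}{F_*^1}\cdot\tfrac{M_0}{\gamma+H_0}+O(\epsilon)$, and again substituting \re{Fs} gives the explicit form in \re{rho4D}.

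The only genuinely delicate point is justifying that the $O(\epsilon^2)$ remainder in $\Phi$ (and its $\rho_4,\mu$ derivatives) is uniform in $\mu$ on the interval $\mu_c<\mu<\mu^*$ and in $\rho_4$ on any compact subset of its range. This is what makes the $O(\epsilon)$ error bound honest; it follows from tracking the dependence of the expansions \re{Ls}--\re{Fs} on parameters, where $L_0=\tfrac{\rho_3(\gamma+H_0)}{\lambda}+O(\epsilon)$ varies with $\mu$ smoothly, and from the fact that the implicit equation for $\rho_4$ has a unique solution with $\p\Phi/\p\rho_4$ bounded away from zero. Once that uniformity is in hand, both \re{rho4} and \re{rho4D} follow by direct substitution.
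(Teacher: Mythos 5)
Your proposal is correct and follows essentially the same route the paper implicitly takes — the paper presents this lemma with no separate proof, prefaced only by ``Following the above proof, we also derive,'' and the intended argument is exactly your extraction of $\rho_4$ from the equation $\Phi(\rho_4,\epsilon,\mu)=0$ together with the identity \re{mum} and the explicit value of $F_*^1$ from \re{Fs}. Your use of the implicit function theorem for \re{rho4D}, leaning on the already-established sign $\partial\Phi/\partial\rho_4<0$, is a slightly more careful rendering of what the paper leaves as formal differentiation, and your remark about uniformity of the $O(\epsilon^2)$ remainders in $\mu$ is a legitimate point of care that the paper glosses over but that indeed follows from the explicit $\mu$-dependence of $L_*^1$ (and $\mu$-independence of $H_*^1$, $F_*^1$) in \re{Ls}--\re{Fs}.
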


\begin{rem}
In contrast to \cite{angio1, Zejia, zhao2}, where $\tilde \sigma$ is independent of $\mu$,
here the explicit dependence of $\rho_4$ with respect to $\mu$ is given 
in the above lemma.
\end{rem}

The following estimates are useful later on:
\begin{lem} The following estimate holds for first derivatives,
\be
 \label{deri} |L_*'(r)| + |H_*'(r)| + |F_*'(r)| + |p_*'(r)|\le C \epsilon,
 \m 1-\epsilon\le r\le 1.
\ee
\end{lem}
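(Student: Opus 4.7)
The plan is to exploit the radial ODE structure of the system: each of \re{r1}--\re{r4} takes the form $-(rU')'/r = \mathrm{RHS}(r)$, and the Neumann condition \re{r5} forces $U'(1)=0$. Integrating once from $r$ to $1$ on the thin interval $[1-\epsilon,1]$ gives
\begin{equation*}
U'(r) = -\frac{1}{r}\int_r^1 s\,\mathrm{RHS}(s)\,ds,
\end{equation*}
so the entire lemma reduces to controlling the pointwise size of each right-hand side on $[1-\epsilon,1]$.

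For $L_*$ and $H_*$ the expansions \re{Ls}--\re{Hs} show that every coefficient and every reaction term is uniformly bounded, so the integration yields $|L_*'(r)|+|H_*'(r)|\le C\epsilon$ at once. For $p_*$ I would exploit the cancellation encoded in the bifurcation parameter: since $\lambda L_0-\rho_3(\gamma+H_0)=\epsilon\mu$ and $F_*=O(\epsilon)$ by \re{Fs}, substituting \re{Ls}--\re{Fs} into the bracketed right-hand side of \re{r4} gives
\begin{equation*}
\frac{\lambda(M_0-F_*)L_*}{\gamma+H_*}-\rho_3(M_0-F_*)-\rho_4 F_* = O(\epsilon),
\end{equation*}
so $|\Delta p_*|=O(\epsilon)$ and the same integration actually yields the sharper bound $|p_*'(r)|\le C\epsilon^2$.

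The estimate on $F_*'$ is the genuinely delicate one, because \re{r3} contains the convection term $F_*'\,p_*'$, which is quadratic in the derivatives we are trying to bound. Writing the radial form of \re{r3} as $(rF_*')' = -(r/D)F_*'p_*' - (r/D)g(r)$ with $|g|\le C$, integrating from $r$ to $1$ and using $F_*'(1)=0$, I would obtain
\begin{equation*}
|rF_*'(r)|\le \frac{1}{D}\int_r^1 s\,|F_*'(s)|\,|p_*'(s)|\,ds + C\epsilon.
\end{equation*}
Feeding in $|p_*'|\le C\epsilon^2$ from the previous step, the first integral is bounded by $(C\epsilon^3/D)\max_{[1-\epsilon,1]}|F_*'|$, and a one-line absorption argument (or, equivalently, a Gr\"onwall estimate) closes the inequality and produces $|F_*'(r)|\le C\epsilon$ for all sufficiently small $\epsilon$.

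The only step where any real care is required is extracting the extra factor of $\epsilon$ in the bound on $p_*'$: without that improvement, the convective coupling in \re{r3} would give an implicit inequality that cannot be closed uniformly in $\epsilon$. Once that cancellation is in hand, everything else is routine one-dimensional integration on a radial interval of length $\epsilon$.
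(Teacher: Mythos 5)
Your overall strategy is the same as the paper's: integrate each radial ODE from $r$ to $1$, use the Neumann condition at $r=1$, and close the $F_*$ estimate by absorbing the quadratic convection term over the thin interval. The computation showing $|\Delta p_*|=O(\epsilon)$ (from the cancellation $\lambda L_0-\rho_3(\gamma+H_0)=\epsilon\mu$ together with $F_*=O(\epsilon)$) is correct, and it does yield the sharper bound $|p_*'|\le C\epsilon^2$.

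However, your concluding remark --- that without the extra factor of $\epsilon$ in the $p_*'$ bound ``the convective coupling in \re{r3} would give an implicit inequality that cannot be closed uniformly in $\epsilon$'' --- is false, and this is precisely where your proof departs from the paper's. The paper uses only $|\Delta p_*|\le C$, which after a single integration already gives $|p_*'|\le C\epsilon$, and the resulting absorption inequality
\begin{equation*}
|rF_*'(r)|\le C\epsilon+\frac{C\epsilon^2}{D}\max_{1-\epsilon\le r\le 1}|rF_*'(r)|
\end{equation*}
closes for all small $\epsilon$ since $\frac{C\epsilon^2}{D}<1$. The extra power of $\epsilon$ you extract only turns the coefficient $\frac{C\epsilon^2}{D}$ into $\frac{C\epsilon^3}{D}$; it makes the absorption more comfortable, but it is in no way necessary. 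You should drop the claim of necessity (and then the $O(\epsilon)$-cancellation step, while correct, becomes an unneeded detour): the one‑dimensional integration over a length-$\epsilon$ interval already supplies the smallness needed to absorb the convection term.
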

\begin{proof}
From \re{Ls}--\re{Fs} we derive that $|\Delta L_*|\le C$,
$|\Delta H_*|\le C$, $|\Delta p_*|\le C$. Using the boundary condition
$L_*'(1)=0$, we find that
\[
 | rL_*'(r) | = \Big|\int_r^1 (\xi L_*'(\xi))' d\xi \Big| \le C\epsilon,
  \m 1-\epsilon\le r \le 1.
 \]
 The estimates for $H_*'(r)$  and for $p_*'(r)$ are similar.
 Finally, for $F_*'(r)$, using the above estimates we find
 \[
  |(rF_*'(r))'|\le C + \frac{C\epsilon}{D} \max_{1-\epsilon\le r\le 1}
  |rF_*'(r)|.
 \]
We then integrate over $(r,1)$ and use $F_*'(1)=0$ to derive
 \[
  | rF_*'(r) | \le C\epsilon + \frac{C\epsilon^2}{D} \max_{1-\epsilon\le r\le 1}
  |rF_*'(r)|,
 \]
which implies $| rF_*'(r) | \le C\epsilon$.
\end{proof}

\subsection{The Crandall-Rabinowitz theorem} Next we state a useful theorem which is  critical in studying bifurcations.

\begin{thm}\label{CRthm}  {\bf (Crandall-Rabinowitz theorem, \cite{crandall})}
Let $X$, $Y$ be real Banach spaces and $\mathcal{F}(\cdot,\cdot)$ a $C^p$ map, $p\ge 3$, of a neighborhood $(0,\mu_0)$ in $X \times \mathbb{R}$ into $Y$. Suppose
\begin{itemize}
\item[(1)] $\mathcal{F}(0,\mu) = 0$ for all $\mu$ in a neighborhood of $\mu_0$,
\item[(2)] $\mathrm{Ker} \,\mathcal{F}_x(0,\mu_0)$ is one dimensional space, spanned by $x_0$,
\item[(3)] $\mathrm{Im} \,\mathcal{F}_x(0,\mu_0)=Y_1$ has codimension 1,
\item[(4)] $\mathcal{F}_{\mu x}(0,\mu_0) x_0 \notin Y_1$.
\end{itemize}
Then $(0,\mu_0)$ is a bifurcation point of the equation $\mathcal{F}(x,\mu)=0$ in the following sense: In a neighborhood of $(0,\mu_0)$ the set of solutions $\mathcal{F}(x,\mu) =0$ consists of two $C^{p-2}$ smooth curves $\Gamma_1$ and $\Gamma_2$ which intersect only at the point $(0,\mu_0)$; $\Gamma_1$ is the curve $(0,\mu)$ and $\Gamma_2$ can be parameterized as follows:
$$\Gamma_2: (x(\epsilon),\mu(\epsilon)), |\epsilon| \text{ small, } (x(0),\mu(0))=(0,\mu_0),\; x'(0)=x_0.$$
\end{thm}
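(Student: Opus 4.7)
The plan is to use a Lyapunov--Schmidt reduction combined with a rescaling that factors out the trivial branch $\{(0,\mu)\}$, thereby reducing the bifurcation problem to a direct application of the implicit function theorem (IFT).

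First I would fix closed splittings $X = \mathrm{span}\{x_0\} \oplus X_1$ (possible because the one-dimensional kernel is complemented in any Banach space) and $Y = \mathrm{span}\{y_0\} \oplus Y_1$ (possible because $Y_1$ has codimension $1$), and write every nearby $x$ uniquely as $x = \alpha x_0 + z$ with $\alpha \in \mathbb{R}$, $z \in X_1$. Then I would rescale by setting $z = \alpha w$ with $w \in X_1$, and introduce
\begin{equation*}
  \Psi(\alpha, w, \mu) \;=\; \int_0^1 \mathcal{F}_x\bigl(t\alpha(x_0+w),\, \mu\bigr)(x_0+w)\, dt,
\end{equation*}
which by hypothesis (1) equals $\mathcal{F}(\alpha(x_0+w),\mu)/\alpha$ for $\alpha \neq 0$ and equals $\mathcal{F}_x(0,\mu)(x_0+w)$ at $\alpha = 0$. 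The integral representation makes $\Psi$ of class $C^{p-1}$ in a neighborhood of $(0,0,\mu_0)$; establishing this regularity up to $\alpha = 0$ (via Taylor's theorem for $\mathcal{F}$) is the one slightly delicate point and is the main technical obstacle I foresee.

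Next I would apply the IFT to $\Psi = 0$ at $(0, 0, \mu_0)$. Since $x_0 \in \mathrm{Ker}\,\mathcal{F}_x(0,\mu_0)$, one has $\Psi(0, 0, \mu_0) = 0$, and
\begin{equation*}
  D_{(w,\mu)}\Psi(0, 0, \mu_0)(v, \eta) \;=\; \mathcal{F}_x(0, \mu_0)\,v \;+\; \eta\, \mathcal{F}_{\mu x}(0, \mu_0)\,x_0.
\end{equation*}
Hypotheses (2)--(3) make the restriction $\mathcal{F}_x(0,\mu_0)|_{X_1}: X_1 \to Y_1$ a linear isomorphism, while hypothesis (4) ensures $\mathcal{F}_{\mu x}(0,\mu_0)\,x_0$ has a nonzero component along $y_0$; consequently the above derivative is an isomorphism $X_1 \times \mathbb{R} \to Y$. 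The IFT then yields unique $C^{p-2}$ curves $\alpha \mapsto (w(\alpha), \mu(\alpha))$ with $w(0)=0$, $\mu(0)=\mu_0$ solving $\Psi \equiv 0$ near $\alpha=0$. Setting $x(\alpha) := \alpha(x_0 + w(\alpha))$ produces a curve $\Gamma_2$ with $\mathcal{F}(x(\alpha),\mu(\alpha)) = \alpha\,\Psi(\alpha, w(\alpha), \mu(\alpha)) \equiv 0$ and $x'(0) = x_0 + w(0) = x_0$, while $\Gamma_1 = \{(0,\mu)\}$ is provided directly by hypothesis (1).

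Finally I would handle the uniqueness/completeness claim. For an arbitrary solution $(x,\mu)$ near $(0,\mu_0)$, decompose $x = \alpha x_0 + z$ with $z \in X_1$. If $\alpha \neq 0$, set $w = z/\alpha$; the uniqueness part of the IFT applied to $\Psi$ forces $(w,\mu) = (w(\alpha),\mu(\alpha))$, so $(x,\mu) \in \Gamma_2$. If $\alpha = 0$ then $\mathcal{F}(z,\mu) = 0$ with $z \in X_1$, and a separate IFT applied to the $Y_1$-projection of this equation (again using that $\mathcal{F}_x(0,\mu_0)|_{X_1}$ is an isomorphism onto $Y_1$) gives $z = 0$, placing $(x,\mu)$ on $\Gamma_1$. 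The two curves meet only at $(0,\mu_0)$ because $x(\alpha) \neq 0$ for small $\alpha \neq 0$. Aside from the regularity of $\Psi$ noted above, the other subtle point is the bookkeeping of lost derivatives (one through the rescaling in $\Psi$, one through the IFT), which is precisely what produces the stated $C^{p-2}$ regularity of the bifurcating branch.
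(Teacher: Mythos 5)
The paper does not prove this theorem: it is a classical result, quoted verbatim and attributed to Crandall and Rabinowitz \cite{crandall}, and is used as a black box in the proof of Theorem \ref{result}. There is therefore no ``paper proof'' for your argument to be compared against; the natural comparison is with the proof in \cite{crandall}, and your proposal is essentially that proof (Lyapunov--Schmidt reduction with the rescaling $x = \alpha(x_0 + w)$ and the integral representation of $\mathcal{F}/\alpha$, followed by the implicit function theorem).

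A few remarks on the details. The construction of $\Psi$ and the computation of $D_{(w,\mu)}\Psi(0,0,\mu_0)$ are correct: hypothesis (1) is exactly what makes $\mathcal{F}(\alpha(x_0+w),\mu)=\alpha\Psi(\alpha,w,\mu)$, and hypotheses (2)--(4) are exactly what make the partial derivative an isomorphism of $X_1\times\mathbb{R}$ onto $Y$ (injectivity of $\mathcal{F}_x(0,\mu_0)|_{X_1}$ follows from (2), surjectivity onto $Y_1$ plus the open mapping theorem give the isomorphism, and (4) supplies the missing codimension-one direction). The uniqueness argument, splitting into $\alpha\neq 0$ and $\alpha=0$ and using the IFT uniqueness in each case, is also sound. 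The one place where your bookkeeping is slightly pessimistic: with $\mathcal{F}\in C^p$, the integrand $\mathcal{F}_x(t\alpha(x_0+w),\mu)(x_0+w)$ is a $C^{p-1}$ function of $(\alpha,w,\mu)$ uniformly in $t$, so $\Psi\in C^{p-1}$ and the IFT already gives $C^{p-1}$ curves $(w(\alpha),\mu(\alpha))$; hence $x(\alpha)=\alpha(x_0+w(\alpha))$ is $C^{p-1}$, which is a bit stronger than the $C^{p-2}$ claimed in the statement. Since the statement asserts less, this is not an error, but you should not describe the IFT as ``losing a derivative'' here --- the only derivative lost is the one from $\mathcal{F}$ to $\mathcal{F}_x$.
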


\section{Bifurcations - preparations}
Let's consider a family of perturbed domains $\Omega_\tau=\{1-\epsilon+ \tilde{R} < r < 1\}$ and denote the corresponding inner boundary to be $\Gamma_\tau$, where $\tilde{R}=\tau S(\theta)$, $|\tau| \ll \epsilon$ and $|S|\le 1$. Let $(L,H,F,p)$ be the solution of
\begin{eqnarray} 
&- \Delta L = - k_1 \frac{(M_0-F)L}{K_1 + L} - \rho_1 L\hspace{1em} &\text{in } \Omega_\tau,\label{b1}\\
&- \Delta H =- k_2 \frac{HF}{K_2 + F} - \rho_2 H\hspace{1em} &\text{in } \Omega_\tau,\label{b2}\\
&-D\Delta F - \nabla F\cdot \nabla p = k_1\frac{(M_0-F)L}{K_1+L}-k_2\frac{HF}{K_2+F}-\lambda\frac{F(M_0-F)L}{M_0(\gamma+H)}+(\rho_3-\rho_4)\frac{(M_0-F)F}{M_0}\hspace{1em} &\text{in }\Omega_\tau,\label{b3}\\
&-\Delta p = \frac{1}{M_0}\Big[\lambda\frac{(M_0-F)L}{\gamma+H}-\rho_3(M_0-F) - \rho_4 F\Big]\hspace{1em} &\text{in }\Omega_\tau,\label{b4}\\
&\frac{\partial L}{\partial r} = \frac{\partial H}{\partial r} = \frac{\partial F}{\partial r} =  \frac{\partial p}{\partial r} = 0, \hspace{2em}& r=1,\label{b5}\\
&\frac{\partial L}{\partial {\bf n}} + \beta_1 (L-L_0) = 0, \hspace{1em}  \frac{\partial H}{\partial {\bf n}} + {\beta_1}(H-H_0)=0, \hspace{1em} \frac{\partial F}{\partial {\bf n}} + \beta_2 F = 0&\text{on } \Gamma_\tau,\label{b6}\\
&p = \kappa &\text{on } \Gamma_\tau.\label{b7}
\end{eqnarray}
The existence and uniqueness of such a solution is guaranteed by the following lemma.

\begin{lem}\label{lem3.1}
Let $S\in C^{4+\alpha}(\Sigma)$ ($\Sigma$ denotes the unit closed disk) with $\|S\|_{C^{4+\alpha}(\Sigma)}\le 1$. For sufficiently small $\epsilon$ and $|\tau|\ll\epsilon$, there is a unique solution $(L,H,F,p)$ to the problem \re{b1} -- \re{b7}.
\end{lem}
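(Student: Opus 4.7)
The plan is to prove Lemma 3.1 by a contraction mapping argument centered on the radially symmetric solution $(L_*,H_*,F_*,p_*)$ furnished by Theorem 2.2. Since the perturbed domain $\Omega_\tau$ is a $C^{4+\alpha}$ deformation of $\Omega_*$ of size $|\tau|\ll\epsilon$, one expects a unique solution $\epsilon$-close to the radial one in suitable Hölder norms. I would first introduce a Hanzawa-type diffeomorphism $\Phi_\tau:\bar\Omega_*\to\bar\Omega_\tau$ that fixes the outer boundary $\{r=1\}$ and sends $\{r=1-\epsilon\}$ to $\Gamma_\tau$, for example $\Phi_\tau(r,\theta)=(r+\tau S(\theta)\chi(r),\theta)$ with $\chi$ a smooth cutoff equal to $1$ near $r=1-\epsilon$ and $0$ near $r=1$. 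For $|\tau|\ll\epsilon$ this is a $C^{4+\alpha}$ diffeomorphism whose Jacobian is $O(\tau/\epsilon)$-close to the identity. Pulling back \eqref{b1}--\eqref{b7} via $\Phi_\tau$ converts the free boundary problem into a coupled elliptic system on the fixed thin annulus $\Omega_*$ whose coefficients and right-hand sides depend smoothly on $\tau S$ and reduce to \eqref{r1}--\eqref{r7} at $\tau=0$.

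Next, writing $(L,H,F,p)=(L_*,H_*,F_*,p_*)+(u,v,w,q)$ and moving the nonlinear reaction terms, the advection $\nabla F\cdot\nabla p$, and the pullback discrepancies to the right-hand side, I would recast the system as a fixed-point problem $(u,v,w,q)=\mathcal{T}(u,v,w,q)$ in $[C^{4+\alpha}(\bar\Omega_*)]^4$. The map $\mathcal{T}$ is defined by inverting the leading linear elliptic operators (Laplacians with Robin boundary conditions for $u,v,w$ and a Dirichlet condition for $q$), whose bounded invertibility on the thin annulus follows from standard Schauder theory, since the radial linearization is nondegenerate. A routine computation then shows that the source term generated by the domain perturbation has $C^{2+\alpha}$-norm of order $|\tau|/\epsilon$, while the quadratic and higher-order corrections contribute $O(\tau^2/\epsilon^2)$.

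Consequently $\mathcal{T}$ maps a ball of radius $C|\tau|/\epsilon$ into itself and is a contraction once $|\tau|/\epsilon$ is sufficiently small, yielding a unique fixed point. Pushing forward by $\Phi_\tau^{-1}$ produces the unique classical solution $(L,H,F,p)$ of \eqref{b1}--\eqref{b7} on $\Omega_\tau$, inheriting the $C^{4+\alpha}$ regularity from the boundary data $S$. The main obstacle is the quasilinear coupling $\nabla F\cdot\nabla p$ in \eqref{b3}, which couples $F$ and $p$ at first order and prevents a naive decoupling of the linearized system; one must treat the pair $(F,p)$ jointly and verify that the Schauder constants for the linearized operator on the thin annulus $\Omega_*$ are controlled in $\epsilon$. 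The estimate $|p_*'(r)|\le C\epsilon$ from the previous lemma is exactly what keeps the first-order term $\nabla F_*\cdot\nabla q + \nabla w\cdot\nabla p_*$ subordinate to the Laplacian, so that the linearization remains uniformly invertible for all admissible $S$ as $\epsilon\to 0$, and the contraction estimate closes.
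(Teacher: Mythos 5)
Your overall strategy---contraction mapping near the radial solution---matches the paper's, but the details differ substantially, and as written there is a genuine gap in your argument.

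The paper's proof does \emph{not} pull back to the fixed annulus $\Omega_*$. Instead it defines the iteration directly on $\Omega_\tau$: given $(L,H,F)$ in the set $\mathcal M=\{0\le L\le L_0,\ 0\le H\le H_0,\ 0\le F\le M_0\}$, it solves linear Robin problems for $\hat L,\hat H$, then a Dirichlet problem for $\hat p$, then a linear Robin/advection problem for $\hat F$, and shows the map is a contraction in the $L^\infty$ norm. Crucially, all the estimates are obtained from the maximum principle with explicit barriers $\xi(r)+c_1(\beta,\epsilon)+c_2(\beta,\tau)$ built in Appendix~\ref{app-super}, so no Schauder constant on a thin domain ever enters the contraction estimate. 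The one place the paper does need a gradient bound (for $\hat p$ and $\hat F$, used to control the advection term) it first applies the anisotropic rescaling $T_\tau$ that maps $\Omega_\tau$ to a strip $[\tfrac12,1]\times[0,\tfrac{2\pi}{\epsilon}]$ of \emph{uniform} width, covers the strip by bounded overlapping sets, and applies interior sub-Schauder estimates there, so that the Schauder constant is independent of $\epsilon$ by construction. The contraction factor that results is $C^{**}(\epsilon+|\tau|)$, so the closing of the argument only needs $\epsilon$ small and $|\tau|\ll\epsilon$.

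The gap in your proposal is precisely the point you flag but do not resolve: you set up a fixed-point iteration in $[C^{4+\alpha}(\bar\Omega_*)]^4$ and assert ``bounded invertibility of the leading linear elliptic operators \dots\ follows from standard Schauder theory.'' On a domain of width $\epsilon$, the Schauder constant for the Dirichlet/Robin Laplacian is \emph{not} uniform in $\epsilon$; it degenerates as $\epsilon\to 0$. Without a rescaling step in the spirit of $T_\tau$, your claim that the source term (which you estimate at $O(|\tau|/\epsilon)$) produces a fixed point in a ball of radius $C|\tau|/\epsilon$, with a contraction that closes for $|\tau|/\epsilon$ small, is unjustified: the unknown $\epsilon$-dependence of $C$ can swamp the smallness of $|\tau|/\epsilon$. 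The observation that $|p_*'|\le C\epsilon$ keeps the advection term subordinate is correct but addresses a different issue (it controls the first-order term relative to the Laplacian), not the degeneracy of the Schauder constant itself. To repair your argument you would either need to redo it in $L^\infty$ with maximum-principle barriers as the paper does, or carry out the Hanzawa pullback \emph{and} a rescaling to a uniform-thickness domain with an $\epsilon$-weighted H\"older norm (tracking that the boundary datum for the pressure is $O(\tau)$, not $O(\tau/\epsilon)$, because the leading $-1/(1-\epsilon)$ is absorbed by $p_*$). Either way this is more work than ``routine'' Schauder theory, and it is exactly the work the paper's proof is designed to make explicit.
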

\begin{proof}
We shall use the contraction mapping principle to prove this lemma. Let
\begin{equation}\label{contraction}
    \mathcal{M}=\{(L,H,F);\; 0\le L\le L_0,\, 0\le H\le H_0,\, 0\le F\le M_0\}.
\end{equation}

\noindent \textbf{\textit{Step 1.}}
For each $(L,M,F)\in \mathcal{M}$, we define a map $\mathcal{L}:(L,H,F)\rightarrow (\hat{L},\hat{H},\hat{F})$ as follows: we first solve $\hat{L}$ and $\hat{H}$ from the elliptic equations
\begin{eqnarray*}
    &- \Delta \hat{L} = - k_1 \frac{(M_0-F)\hat{L}}{K_1 + L} - \rho_1 \hat{L}\hspace{2em} &\text{in } \Omega_\tau,\\
&- \Delta \hat{H} =- k_2 \frac{\hat{H}F}{K_2 + F} - \rho_2 \hat{H}\hspace{2em} &\text{in } \Omega_\tau,
\end{eqnarray*}
with the
boundary conditions
\begin{eqnarray*}
    &\frac{\p \hat{L}}{\p r}=\frac{\p \hat{H}}{\p r}=0, \hspace{2em} & r=1,\\
    &\frac{\partial \hat{L}}{\partial {\bf n}} + {\beta_1}(\hat{L}-L_0) = \frac{\partial \hat{H}}{\partial {\bf n}} + {\beta_1}(\hat{H}-H_0)=0\hspace{2em}&\text{on }\Gamma_\tau.
\end{eqnarray*}
By the maximum principle, we clearly have
\begin{equation}
    0\le \hat{L} \le L_0,\hspace{2em} 0\le \hat{H}\le H_0 \hspace{2em} \text{in }\bar{\Omega}_\tau. \label{lem31-1}
\end{equation}

We then define $\hat{p}$ by the solution of the system
\begin{gather}
\label{lemp}
    -\Delta \hat{p} = \frac{1}{M_0}\Big[\lambda\frac{(M_0-F)L}{\gamma+H}-\rho_3(M_0-F) - \rho_4 F\Big]\hspace{2em} \text{in }\Omega_\tau,\\
    \label{lempbc}
    \frac{\p \hat{p}}{\p r}\Big|_{r=1} = 0,\hspace{2em} \hat{p}\Big|_{\Gamma_\tau} = \kappa .
\end{gather}
Since $L,H,F$ are all bounded, the right-hand side of \re{lemp} is bounded under supremum norm, i.e.,
\begin{equation}
\label{boundp}
    |\Delta (\hat{p}+1)| \le C.
\end{equation}
Also, we use the mean-curvature formula, i.e.,
\begin{equation}
    \label{kappa}
    \kappa|_{\Gamma_\tau} = -\frac{1}{1-\epsilon}+\frac{\tau}{(1-\epsilon)^2}(S+S_{\theta\theta})+\tau^2 f_1, \hspace{1em} \text{where } \|f_1\|_{C^{1+\alpha}}\le C\|S\|_{C^{3+\alpha}(\Sigma)},
\end{equation}
to derive that 
\begin{equation}
\label{boundpbc}
    \|\hat{p}+1\|_{C^{1+\alpha}(\Gamma_\tau)} \le C\epsilon.
\end{equation}
By the maximum principle,
\begin{equation}
    \label{hatpbound}
    \|\hat{p}+1\|_{L^\infty(\Omega_\tau)} \le C(\xi+\epsilon)\le C(O(\epsilon^2)+\epsilon)\le C\epsilon,
\end{equation}
where $\xi$ is defined in Appendix \ref{app-super}. Next we are going to estimate $\|\hat{p}\|_{C^1}$ and show that it is actually independent of $\epsilon$ and $\tau$. To do that, we shall use the Schauder estimates; but before using the Schauder estimates directly, let's apply the following transformation:
\begin{equation*}
    T_\tau: \tilde{r}=\frac{r-1}{2(\epsilon-\tau S(\theta))}+1, \hspace{2em} \tilde{\theta}=\frac{\theta}{\epsilon},
\end{equation*}
and denote $\tilde{p}(\tilde{r},\tilde{\theta})=\hat{p}(r,\theta)-1$. Clearly, $T_\tau$ maps $\Omega_\tau$ to a long stripe region $(\tilde{r},\tilde{\theta})\in[\frac12,1]\times[0,\frac{2\pi}{\epsilon}]$. Based on the calculations from Appendix \ref{app-2}, $\tilde{p}$ satisfies
\begin{equation*}
     -\frac{\p }{\p \tilde{r}}\Big((1+ A_1) \frac{\p \tilde{p}}{\p \tilde{r}} + A_2 \frac{\p \tilde{p}}{\p \tilde{\theta}}\Big) - \frac{\p}{\p \tilde{\theta}} \Big(A_3\frac{\p \tilde{p}}{\p \tilde{r}} + (1+A_4) \frac{\p \tilde{p}}{\p \tilde{\theta}}\Big) +  A_5\frac{\p \tilde{p}}{\p \tilde{r}} + A_6\frac{\p \tilde{p}}{\p \tilde{\theta}} = \epsilon^2 f_2,
\end{equation*}
where coefficients $A_1,A_2,A_3,A_4\in C^{\alpha}$, $A_5,A_6$ are bounded, and $f_2=\frac{r}{M_0}\Big[\lambda\frac{(M_0-F)L}{\gamma+H}-\rho_3(M_0-F) - \rho_4 F\Big]$ is also bounded based on \re{contraction}. Applying the interior sub-Schauder estimates (Theorem 8.32, \cite{GT}) on the region $\Omega_{i_0}: (\tilde{r},\tilde{\theta})\in [\frac12,1]\times [\theta_{i_0}-2,\theta_{i_0}+2]$, recalling also \re{boundpbc}, we obtain
\begin{equation*}
\begin{split}
    \|\tilde{p}\|_{C^{1+\alpha}([\frac12,1]\times [\theta_{i_0}-1,\theta_{i_0}+1])}&\le C_{i_0}\Big(\epsilon^2 \|f_2\|_{L^\infty(\Omega_{i_0})} + \|\tilde{p}\|_{L^\infty(\Omega_{i_0})} + \|\t p\|_{C^{1+\alpha}(\{\t r=\frac12\})}\Big)\\
    &\le C_{i_0}\Big(\epsilon^2\|f_2\|_{L^\infty([\frac12,1]\times[0,\frac{2\pi}{\epsilon}])} + \|\hat{p}+1\|_{L^\infty(\Omega_\tau)} + \|\hat{p}+1\|_{C^{1+\alpha}(\Gamma_\tau)}\Big)\\
    &\le \tilde{C}_{i_0}\epsilon,
    \end{split}
\end{equation*}
where $\tilde{C}_{i_0}$ is independent of $\epsilon$ and $\tau$. We use a series of sets $[\frac12,1]\times[\theta_{i_0}-1,\theta_{i_0}+1]$ to cover the whole region $[\frac12,1]\times[0,\frac{2\pi}{\epsilon}]$, as a result, 
$$\|\tilde{p}\|_{C^{1+\alpha}([\frac12,1]\times[0,\frac{2\pi}{\epsilon}])}\le C\epsilon.$$
We then relate $\tilde{p}$ with $\hat{p}$ to derive
\begin{equation*}
    \|\hat{p}+1\|_{C^1(\bar{\Omega}_\tau)}\le \frac{1}{\epsilon}\|\tilde{p}\|_{C^1([\frac12,1]\times[0,\frac{2\pi}{\epsilon}])}\le \frac{1}{\epsilon}\|\tilde{p}\|_{C^{1+\alpha}([\frac12,1]\times[0,\frac{2\pi}{\epsilon}])}\le C,
\end{equation*}
and hence 
\begin{equation}
    \label{p-reg}
    \|\nabla \hat{p}\|_{L^\infty(\Omega_\tau)} \le C,
\end{equation}
where $C$ is independent of $\epsilon$ and $\tau$.

Finally, recalling equation \re{b3}, we define $\hat{F}$ as the solution to the equation
\begin{equation}
    \label{lem31-2}
        -D\Delta \hat{F} - \nabla \hat{F}\cdot \nabla \hat{p} = k_1\frac{(M_0-\hat{F})L}{K_1+L}-k_2\frac{H\hat{F}}{K_2+F}-\lambda\frac{\hat{F}(M_0-F)L}{M_0(\gamma+H)}
        +\frac{\rho_3}{M_0}(M_0-\hat{F})F - \frac{\rho_4}{M_0}(M_0-F)\hat{F},
\end{equation}
with the boundary conditions
\begin{equation}
\label{lem31-3}
    \frac{\p \hat{F}}{\p r}\Big|_{r=1} = 0, \hspace{2em} \Big[\frac{\p \hat{F}}{\p {\bm n}}+\beta_2 \hat{F}\Big]\Big|_{\Gamma_\tau} = 0.
\end{equation}
By the maximum principle,
$%begin{equation*}
    \hat{F}\ge 0 \hspace{.5em} \text{in }\bar{\Omega}_\tau,
$%\end{equation*}
and, using this result, we employ the maximum principle again to derive the inequality
$%$\begin{equation*}
    M_0 - \hat{F} \ge 0 \hspace{.5em} \text{in } \bar{\Omega}_\tau.
$ %\end{equation*}
All together, these two inequalities indicate
\begin{equation}\label{boundforp}
    0\le \hat{F}\le M_0.
\end{equation}
In the next step, we claim that this bound for $\hat{F}$ can be improved. By \re{contraction} and \re{boundforp}, the right-hand side of equation \re{lem31-2} is bounded; assume the bound is constant $C$. According to Appendix~\ref{app-super}, $C(\xi(r)+c_1(\beta_2,\epsilon)+c_2(\beta_2,\tau))$ can be a supersolution for $\hat{F}$, hence the maximum principle leads to
\begin{equation}\label{hatFbound}
    \Big\|\hat{F}\Big\|_{L^\infty(\Omega_\tau)} \le \Big\|C\Big(\xi(r)+c_1(\beta_2,\epsilon)+c_2(\beta_2,\tau)\Big)\Big\|_{L^\infty(\Omega_\tau)}\le C\Big(\frac{\epsilon}{\beta_2}+ \frac{2}{\beta_2}|\tau| + O(\epsilon^2)\Big) \le C\epsilon.
\end{equation}
After we show this, we can employ the sub-Schauder estimates on \re{lem31-2} -- \re{lem31-3} in a similar way as we did for $\hat{p}$ to obtain
\begin{equation}
    \label{F-reg}
    \|\nabla \hat{F}\|_{L^\infty(\Omega_\tau)} \le C,
\end{equation}
where $C$ is a constant which does not depend upon $\epsilon$ and $\tau$.

Above, we have shown that $(\hat{L}, \hat{H}, \hat{F})\in \mathcal{M}$, which means $\mathcal{L}$ maps $\mathcal{M}$ into itself. We shall next prove that $\mathcal{L}$ is a contraction.

\vspace{10pt}
\noindent \textbf{\textit{Step 2.}}
Suppose that $(\hat{L}_j, \hat{H}_j, \hat{F}_j)=\mathcal{L}(L_j, H_j, F_j)$ for $j=1,2$, and set
\begin{gather*}
    \mathcal{A} = \|L_1-L_2\|_{L^\infty(\Omega_\tau)} + \|H_1-H_2\|_{L^\infty(\Omega_\tau)} + \|F_1-F_2\|_{L^\infty(\Omega_\tau)},\\
    \mathcal{B} = \|\hat{L}_1-\hat{L}_2\|_{L^\infty(\Omega_\tau)} + \|\hat{H}_1-\hat{H}_2\|_{L^\infty(\Omega_\tau)} + \|\hat{F}_1-\hat{F}_2\|_{L^\infty(\Omega_\tau)}.
\end{gather*}
Based on our definitions of $\hat{L}_j$, $\hat{H}_j$, $\hat{p}_j$, $\hat{F}_j$ in the first step and recalling \re{p-reg} as well as \re{F-reg}, we derive, for some constant $C^*$,
\begin{equation*}
\begin{split}
    &|\Delta(\hat{L}_1-\hat{L}_2)|\le C^*(\mathcal{A}+\mathcal{B}), \mm
    |\Delta(\hat{H}_1-\hat{H}_2)|\le C^*(\mathcal{A}+\mathcal{B}),\\
    &|\nabla \hat{F}_1| + |\nabla \hat{F}_2| \le C^*, \quad
    |\nabla \hat{p}_1| + |\nabla \hat{p}_2| \le C^*, \quad|\nabla(\hat{p}_1 - \hat{p}_2)|\le C^*\mathcal{A},\\
    &|D\Delta(\hat{F}_1-\hat{F}_2) + \nabla\hat{p}_1 \cdot \nabla(\hat{F}_1 - \hat{F}_2)| \le C^*(\mathcal{A}+\mathcal{B}).
\end{split}
\end{equation*}
Next we shall use the maximum principle to derive bounds for $\hat{L}_1-\hat{L}_2$, $\hat{H}_1-\hat{H}_2$, and $\hat{F}_1-\hat{F}_2$. To do that, we use the function $\xi(r) + c_1(\beta_1,\epsilon) + c_2(\beta_1,\tau)$ defined in Appendix \ref{app-super}. As a result,
\begin{gather*}
    |\hat{L}_1 - \hat{L}_2| \le C^*(\mathcal{A}+\mathcal{B})(\xi+c_1(\beta_1,\epsilon) + c_2(\beta_1,\tau))\Rightarrow \|\hat{L}_1 - \hat{L}_2\|_{L^\infty(\Omega_\tau)}\le C^{**}(\mathcal{A}+\mathcal{B})(\epsilon+|\tau|),\\
    |\hat{H}_1 - \hat{H}_2| \le C^*(\mathcal{A}+\mathcal{B})(\xi+c_1(\beta_1,\epsilon) + c_2(\beta_1,\tau))\Rightarrow \|\hat{H}_1 - \hat{H}_2\|_{L^\infty(\Omega_\tau)}\le C^{**}(\mathcal{A}+\mathcal{B})(\epsilon+|\tau|),\\
    |\hat{F}_1 - \hat{F}_2| \le C^*(\mathcal{A}+\mathcal{B})(\xi+c_1(\beta_2,\epsilon) + c_2(\beta_2,\tau))\Rightarrow \|\hat{F}_1 - \hat{F}_2\|_{L^\infty(\Omega_\tau)}\le C^{**}(\mathcal{A}+\mathcal{B})(\epsilon+|\tau|),
\end{gather*}
where both $C^*$ and $C^{**}$ are independent of $\epsilon$ and $\tau$. The above inequalities imply that
$$\mathcal{B}\le C^{**}(\mathcal{A}+\mathcal{B})(\epsilon+|\tau|),$$
hence we obtain a contraction mapping by taking $\epsilon$ sufficiently small and $|\tau|\ll \epsilon$ so that
$$\frac{C^{**}(\epsilon+|\tau|)}{1-C^{**}(\epsilon+|\tau|)}< 1.$$
The unique fixed point of the contraction mapping is the unique
classical solution to the system \re{b1} -- \re{b7}.
\end{proof}

With $p$ being uniquely determined in the system \re{b1} -- \re{b7}, we define $\mathcal{F}$ by
\begin{equation}
    \label{F}
    \mathcal{F}(\tau S,\mu) = -\frac{\p p}{\p {\bf n}}\Big|_{\Gamma_\tau},
\end{equation}
where $\mu = \frac1\epsilon [\lambda L_0 - \rho_3 (\gamma+ H_0)]$ is our bifurcation parameter. We know that $(L,H,F,p)$ is a symmetry-breaking stationary solution if and only if $\mathcal{F}(\tau S,\mu)=0$.

In order to apply the Crandall-Rabinowitz theorem, we need to compute the Fr\'echet derivatives of $\mathcal{F}$. For a fixed small $\epsilon$, we formally write $(L,H,F,p)$ as
\begin{gather}
    L = L_*+\tau L_1 + O(\tau^2),\label{expand1}\\
    H = H_*+\tau H_1 + O(\tau^2), \label{expand2}\\
    F = F_*+\tau F_1 + O(\tau^2), \label{expand3}\\
    p = p_*+\tau p_1 + O(\tau^2). \label{expand4}
\end{gather}
In the following, we shall first justify \re{expand1} -- \re{expand4}. The structure of the proofs is  similar to that in \cite{CE3, angio1, Fengjie, Hongjing, Zejia, zhao2}. However, our problem is much more involved since the system \re{b1} -- \re{b7} is highly nonlinear and coupled, hence we shall use very delicate estimates and the continuation lemma (see Appendix \ref{appcon}) to tackle the problem.

\subsection{First-order $\tau$ estimates}
\begin{lem}\label{first}
  Fix $\epsilon$ sufficiently small, if $|\tau|\ll \epsilon$ and $\|S\|_{C^{4+\alpha}(\Sigma)}\le 1$, then we have
 \begin{eqnarray*}
     &\max\{\|L-L_*\|_{L^\infty(\Omega_\tau)} , \|H-H_*\|_{L^\infty(\Omega_\tau)}, \|F-F_*\|_{L^\infty(\Omega_\tau)}, \|p-p_*\|_{L^\infty(\Omega_\tau)}\} \le C|\tau|\|S\|_{C^{4+\alpha}(\Sigma)},\\
     & \max\{ \|\nabla(F-F_*)\|_{L^\infty(\Omega_\tau)}, \|\nabla(p-p_*)\|_{L^\infty(\Omega_\tau)}\}\le \frac{C}\epsilon|\tau|\|S\|_{C^{4+\alpha}(\Sigma)},
 \end{eqnarray*}
 where $C$ is independent of $\epsilon$ and $\tau$.
\end{lem}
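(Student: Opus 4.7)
My plan is to reuse the contraction-mapping machinery of Lemma \ref{lem3.1} applied to the deviations from the radial profile, where $(L_*, H_*, F_*, p_*)$ denotes the $C^\infty$ extension to $\Omega_{2\epsilon}$ from Remark 2.2, restricted to $\Omega_\tau$. The first step is to compute the boundary discrepancies incurred by viewing $(L_*, H_*, F_*, p_*)$ on $\Gamma_\tau$ instead of $\Gamma_*$. For each radial $u_*\in \{L_*, H_*, F_*\}$, since $\nabla u_*$ has no angular component, $\partial u_*/\partial{\bf n}|_{\Gamma_\tau}$ is determined purely by the radial component of ${\bf n}$ and by $u_*'(1-\epsilon+\tau S)$. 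Taylor-expanding in $\tau$ and invoking $|u_*'|=O(\epsilon)$ from Lemma 2.3 together with $|u_*''|=O(1)$ (from the corresponding radial PDE and its bounded right-hand side), I obtain
\[
  \frac{\partial L_*}{\partial{\bf n}}\Big|_{\Gamma_\tau} + \beta_1\bigl(L_*|_{\Gamma_\tau} - L_0\bigr) = O(|\tau|\|S\|_{C^0}),
\]
and analogously for $H_*, F_*$; for $p_*$, the curvature expansion \re{kappa} together with $|p_*'|=O(\epsilon)$ yields a Dirichlet discrepancy of size $O(|\tau|\|S\|_{C^{2+\alpha}})$.

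Next, I would apply the contraction map $\mathcal L$ of Lemma \ref{lem3.1} to the initial guess $(L_*, H_*, F_*)$. Since $(L_*, H_*, F_*, p_*)$ already satisfies the PDEs \re{b1}--\re{b4} in $\Omega_\tau$ when itself is plugged into the coefficients, the differences $\hat L_* - L_*$, $\hat H_* - H_*$, $\hat p_* - p_*$ solve \emph{homogeneous} linear elliptic equations on $\Omega_\tau$ driven only by the boundary discrepancies above, and a constant-supersolution maximum-principle argument yields $L^\infty$ bounds of order $|\tau|\|S\|_{C^{4+\alpha}}$. The equation for $\hat F_* - F_*$ additionally picks up $\nabla F_*\cdot\nabla(\hat p_* - p_*)$ in its right-hand side, but $|\nabla F_*| = O(\epsilon)$ from Lemma 2.3 while the rescaled Schauder step below gives $\|\nabla(\hat p_*-p_*)\|_{L^\infty}\le C|\tau|/\epsilon$, so this coupled source remains $O(|\tau|)$. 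Because $\mathcal L$ was shown to be a contraction with rate $q<1$ uniform in $\epsilon,\tau$ so long as $|\tau|\ll\epsilon$, the geometric-series estimate
\[
  \|(L,H,F)-(L_*, H_*, F_*)\|_{L^\infty} \le \frac{1}{1-q}\,\|\mathcal L(L_*, H_*, F_*)-(L_*, H_*, F_*)\|_{L^\infty}
\]
then delivers the claimed $L^\infty$ bound on $\tilde L, \tilde H, \tilde F$, and the $\tilde p$ estimate follows from its elliptic equation together with the maximum principle.

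For the gradient estimates on $\tilde F$ and $\tilde p$, I would imitate the rescaling step of Lemma \ref{lem3.1}: push $\tilde p$ and $\tilde F$ through $T_\tau$ to the strip $[\tfrac12,1]\times[0,2\pi/\epsilon]$, apply the interior sub-Schauder estimate over overlapping unit patches, and pull back. The $L^\infty$ data and right-hand sides on the strip are of order $|\tau|\|S\|_{C^{4+\alpha}}$, so $C^{1+\alpha}$ control of the same order holds, and reverting through $T_\tau^{-1}$ costs exactly the single factor $1/\epsilon$ in the claimed gradient bound. The main obstacle I anticipate is closing the loop between the $L^\infty$-via-maximum-principle step for $\hat F_*-F_*$ and the Schauder-via-rescaling step for $\hat p_*-p_*$: each depends on the other's bound, so carrying out this bootstrap uniformly over the whole range $|\tau|\ll\epsilon$ will likely require the continuation lemma of Appendix \ref{appcon} to prevent any hidden loss of a power of $1/\epsilon$ or of $\|S\|_{C^{4+\alpha}}$ when the coupled estimates are iterated.
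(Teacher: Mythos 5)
Your approach is correct but genuinely different from the paper's. The paper does \emph{not} iterate the contraction map $\mathcal{L}$: instead it writes the fully coupled system satisfied by $L-L_*$, $H-H_*$, $F-F_*$, $p-p_*$ directly (with bounded coefficients $b_i(r)$), multiplies the right-hand sides by a continuation parameter $\delta\in[0,1]$, posits a priori bounds, and closes the argument with explicit cosine-type supersolutions $\phi_1(r)=2\tilde C|\tau|\|S\|\cos\big(\tfrac{1-r}{\epsilon}\big)$ (for $p-p_*$) and $\phi_2(r)=M_1|\tau|\|S\|\cos\big(\tfrac{M_2(1-r)}{\sqrt\epsilon}\big)$ (for the remaining differences), verifying that the a priori bounds self-improve so that Lemma \ref{con} applies. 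This continuation is genuinely needed in the paper because the system for the differences is circular: $\nabla(p-p_*)$ appears as a source in the $F-F_*$ equation while $F-F_*$ appears as a source in the $p-p_*$ equation. Your route instead applies $\mathcal{L}$ \emph{once} to the extended radial profile $(L_*,H_*,F_*)$, uses that $(L_*,H_*,F_*,p_*)$ already solves the radial versions of \re{b1}--\re{b4} to reduce $\hat{L}_*-L_*$, $\hat{H}_*-H_*$, $\hat{p}_*-p_*$ to homogeneous problems driven only by $O(|\tau|)$ boundary discrepancies, and then invokes the geometric-series estimate $\|(L,H,F)-(L_*,H_*,F_*)\|\le(1-q)^{-1}\|\mathcal{L}(L_*,H_*,F_*)-(L_*,H_*,F_*)\|$. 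Your decomposition of the $F$-equation is in fact sharper than the paper's: you pick up the source $\nabla F_*\cdot\nabla(\hat p_*-p_*)$ of size $O(\epsilon)\cdot O(|\tau|/\epsilon)=O(|\tau|)$, whereas the paper's decomposition leaves $\nabla F\cdot\nabla(p-p_*)=O(|\tau|/\epsilon)$, which is why it needs the $\sqrt\epsilon$-scale cosine supersolution.

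One correction to your own assessment, though: the bootstrap obstacle you anticipate between $\hat F_*-F_*$ and $\hat p_*-p_*$ does not exist. In the definition of $\mathcal{L}$ (Step 1 of the proof of Lemma \ref{lem3.1}), $\hat p$ is solved from \re{lemp}--\re{lempbc} using only the \emph{input} triple $(L,H,F)$; it does not see $\hat F$. So $\hat p_*$ is determined before and independently of $\hat F_*$, the dependence is sequential rather than circular, and Lemma \ref{con} is not actually required on your route. A minor point worth noting when you write this up carefully: the estimate $\|A-B\|\le(1-q)^{-1}\|\mathcal{L}B-B\|$ with $A$ the fixed point only needs $\mathcal{L}$ to be Lipschitz on a set containing $A$ and $B$; this dispenses with any worry that the ODE-extended $(L_*,H_*,F_*)$ restricted to $\Omega_\tau$ might leave $\mathcal{M}$ by $O(\epsilon)$.
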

\begin{proof}
We combine \re{r1} -- \re{r7} and \re{b1} -- \re{b7} to obtain the equations for $L-L_*$, $H-H_*$, $F-F_*$ and $p-p_*$. For example, we have
\begin{eqnarray}
\nonumber
        -\Delta(L-L_*) &=& -k_1\frac{(M_0-F)L}{K_1+L}-\rho_1 L + k_1\frac{(M_0-F_*)L_*}{K_1+L_*}+\rho_1 L_*\\
        &=& \Big[-k_1\frac{(M_0-F)K_1}{(K_1+L)(K_1+L_*)}-\rho_1\Big](L-L_*) + k_1 \frac{L_*}{K_1+L_*}(F-F_*)\label{lls}\\
  %     &=\Big(-\frac{k_1(M_0-F_*)}{K_1+L_*}+\frac{k_1(M_0-F)L}{(K_1+L_*)(K_1+L)}-\rho_1\Big)(L-L_*) + \frac{k_1 L}{K_1+L_*}(F-F_*)\\
        &\triangleq& b_1(r)(L-L_*) + b_2(r)(F-F_*),\nonumber
\end{eqnarray}
where $b_1(r)$ and $b_2(r)$ are both bounded since $0\le L_*,L\le L_0$, $0\le H_*,H \le H_0$, and $0\le F,F_*\le M_0$ based on Lemma \ref{lem3.1} and \textit{Lemma 3.1} in \cite{FHHplaque}. In addition, the boundary conditions for $L-L_*$ are
\begin{gather*}
    \frac{\p (L-L_*)}{\p r}\Big|_{r=1} = 0,\label{newbd1}\\
    \Big(\frac{\p (L-L_*)}{\p {\bm n}}+\beta_1 (L-L_*)\Big)\Big|_{\Gamma_\tau}=\Big(\frac{\p L_*}{\p r}-\beta_1 L_*\Big)\Big|_{r=1-\epsilon+\tau S} - \Big(\frac{\p L_*}{\p r}-\beta_1 L_*\Big)\Big|_{r=1-\epsilon} + O(|\tau S|^2).\label{newbdy2}
\end{gather*}
Since $L_*, H_*, F_*$ are all bounded and $|L_*'|\le C\epsilon$ by \re{deri}, we know from the equation \re{r1} that $|L_*''|$ is bounded with a bounded independent of $\epsilon$ and $\tau$. Hence we can find a constant, we denote it by $\t C$, which does not depend upon $\epsilon$ and $\tau$, such that
\begin{equation}\label{bdyLnew}
    \bigg|\Big(\frac{\p (L-L_*)}{\p {\bm n}}+\beta_1 (L-L_*)\Big)\Big|_{\Gamma_\tau}\bigg| \le \t C |\tau|\|S\|_{C^{4+\alpha}(\Sigma)}.
\end{equation}
Similarly, we can write the equations of $H-H_*$, $F-F_*$ and $p-p_*$ as
\begin{eqnarray}
    &-\Delta (H-H_*) = b_3(r)(H-H_*) + b_4(r)(F-F_*) \hspace{2em} &\text{in } \Omega_\tau,\label{hhs}\\
    &\label{ffs} \begin{split}
    -D\Delta (F-F_*) - \nabla p_*\cdot\nabla(F-F_*) =& \nabla F\cdot \nabla(p-p_*) + b_5(r)(L-L_*)\\
    &\quad+b_6(r)(H-H_*) + b_7(r)(F-F_*) 
    \end{split}\hspace{2em} &\text{in }\Omega_\tau,\\
    &-\Delta(p-p_*) = b_8(r)(L-L_*) + b_9(r)(H-H_*) + b_{10}(r)(F-F_*) \hspace{2em} &\text{in } \Omega_\tau,\label{pps}
\end{eqnarray}
where $b_i(r)$, $i=3,\cdots,10$ are all bounded, and it is shown earlier that $\|\nabla F\|_{L^\infty}$ and $\|\nabla p_*\|_{L^\infty}$ are bounded; for simplicity, we shall use the same constant $\t C$ to control $\|\nabla F\|_{L^\infty}$ and $\|\nabla p_*\|_{L^\infty}$, namely,
\be \label{pF}
    \|\nabla F\|_{L^\infty}\le \t C,\hspace{2em}  \|\nabla p_*\|_{L^\infty} \le \t C.
\ee 
Furthermore, the boundary conditions for $H-H_*$, $F-F_*$ and $p-p_*$ satisfy
\begin{eqnarray}
    &\frac{\p (H-H_*)}{\p r}\Big|_{r=1} = \frac{\p (F-F_*)}{\p r}\Big|_{r=1} = \frac{\p (p-p_*)}{\p r}\Big|_{r=1} = 0,\label{newbdy3}\\
    &\bigg|\Big(\frac{\p (H-H_*)}{\p {\bm n}}+\beta_1 (H-H_*)\Big)\Big|_{\Gamma_\tau}\bigg|\le \t C|\tau|\|S\|_{C^{4+\alpha}(\Sigma)},\label{newbdy4}\\
    &\bigg|\Big(\frac{\p (F-F_*)}{\p {\bm n}}+\beta_2 (F-F_*)\Big)\Big|_{\Gamma_\tau}\bigg|\le \t C|\tau|\|S\|_{C^{4+\alpha}(\Sigma)},\label{newbdy5}\\
    &\Big|(p-p_*)|_{\Gamma_\tau}\Big|\le \t C|\tau|\|S\|_{C^{4+\alpha}(\Sigma)},\label{newbdy6}
\end{eqnarray}
where the last inequality is based on the formula of $\kappa$ in \re{kappa}.

Since the system is highyly coupled, it is not an easy job to prove the estimates in Lemma \ref{first}. To show that, we use the idea of continuation (Appendix \ref{appcon}). We multiply the right-hand sides of \re{lls} -- \re{pps} by $\delta$ with $0\le \delta \le1$, and we shall 
combine the proofs for the case $\delta = 0 $ as well as the case $0<\delta\le 1$.

 We first assume that, in the case $\delta >0$, for some $M_1>0$
to be determined later on
 \begin{eqnarray}
     &\max\Big(\label{assum1} \|L-L_*\|_{L^\infty(\Omega_\tau)}, \|H-H_*\|_{L^\infty(\Omega_\tau)} , \|F-F_*\|_{L^\infty(\Omega_\tau)}\Big)  \le 2M_1|\tau|\|S\|_{C^{4+\alpha}(\Sigma)},\\
     \label{assum2} &\|\nabla (F-F_*)\|_{L^\infty(\Omega_\tau)} \le \frac{2M_1 C_s}{\epsilon}|\tau|\|S\|_{C^{4+\alpha}(\Sigma)},\\
    \label{assum4} &
    \dis \|p-p_*\|_{L^\infty(\Omega_\tau)} \le 3 \t C |\tau|\|S\|_{C^{4+\alpha}(\Sigma)}, \hspace{2em} \|\nabla(p-p_*)\|_{L^\infty(\Omega_\tau)} \le \frac{3 C_s \t C}{\epsilon}|\tau|\|S\|_{C^{4+\alpha}(\Sigma)},
 \end{eqnarray}
 where $\t C$ is from \re{bdyLnew}, \re{pF}, \re{newbdy4}--\re{newbdy6}, and $C_s$ is a scaling factor which comes from applying the $C^{1+\alpha}$ Schauder estimate as we did in Lemma \ref{lem3.1}; both $\t C$ and $C_s$ are independent of $\epsilon$ and $\tau$.
 
Let's first show \re{assum4}. Based on \re{assum1}, for the case $\delta >0$, the right-hand side of \re{pps} is bounded, i.e.,
\begin{equation}\label{pp*}
    |\Delta (p-p_*)| \le 2M_1\delta (\|b_8\|_{L^\infty(\Omega_\tau)}+\|b_9\|_{L^\infty(\Omega_\tau)} + \|b_{10}\|_{L^\infty(\Omega_\tau)}) |\tau| \|S\|_{C^{4+\alpha}(\Sigma)};
\end{equation}
notice that the above estimates is automatically valid 
in the case $\delta = 0$ without the 
assumptions \re{assum1} since the right-hand side is zero.
Let 
\begin{equation}\label{cos1}
    \phi_1(r)=2\tilde{C}|\tau|\|S\|_{C^{4+\alpha}(\Sigma)}\cos\Big(\frac{1-r}{\epsilon}\Big),
\end{equation}
then
\begin{equation*}
    \phi_1'(r) = \frac{2\t C}{\epsilon}\sin\Big(\frac{1-r}{\epsilon}\Big)|\tau|\|S\|_{C^{4+\alpha}(\Sigma)}, \hspace{2em} \phi_1''(r)=-\frac{2\t C}{\epsilon^2}\cos\Big(\frac{1-r}{\epsilon}\Big)|\tau|\|S\|_{C^{4+\alpha}(\Sigma)},
\end{equation*}
and
\begin{equation*}
    -\Delta \phi_1 = \Big[\frac{1}{\epsilon}\cos\Big(\frac{1-r}{\epsilon}\Big) - \sin\Big(\frac{1-r}{\epsilon}\Big)\Big]\frac{2\t C}{\epsilon}|\tau|\|S\|_{C^{4+\alpha}(\Sigma)},\hspace{2em} \phi_1\Big|_{\Gamma_\tau} = 2\t C\cos\Big(1-\frac{\tau S}{\epsilon}\Big)|\tau|\|S\|_{C^{4+\alpha}(\Sigma)},
\end{equation*}
where $\t C$, again, comes from \re{bdyLnew}, \re{pF}, \re{newbdy4}--\re{newbdy6}. Notice that $\cos 1\approx 0.54>1/2$, and we use the
smallness of $\epsilon$ to majorize the right-hand side of \re{pp*} for a supersolution for $p-p_*$ when $\epsilon$ is small and $|\tau|\ll \epsilon$, hence
\begin{equation*}
    \|p-p_*\|_{L^\infty(\Omega_\tau)} \le 2\tilde{C}|\tau|\|S\|_{C^{4+\alpha}(\Sigma)}.
\end{equation*}
Using a scaling argument as in \re{p-reg}, we further have
\begin{equation}\label{nablap}
    \|\nabla(p-p_*)\|_{L^\infty(\Omega_\tau)} \le \frac{2C_s\tilde{C}}{\epsilon}|\tau|\|S\|_{C^{4+\alpha}(\Sigma)}.
\end{equation}
In the next step, let's consider $L-L_*$ and $H-H_*$. It follows from the assumption \re{assum1} that
\begin{equation}
\label{bound1} |\Delta (L-L_*)| \le C M_1 \delta |\tau|\|S\|_{C^{4+\alpha}(\Sigma)}, \mm
 |\Delta (H-H_*)| \le C M_1 \delta |\tau|\|S\|_{C^{4+\alpha}(\Sigma)},
\end{equation}
where $C$ is some universal constant. Recalling also \re{pF} and \re{nablap}, we have the following estimate for $F-F_*$,
\begin{equation}
    \begin{split}
        \label{bound2} \Big|\Delta (F-F_*) + \frac1D\nabla p_*\cdot\nabla(F-F_*)\Big| \;\le&\;\;  \Big\|\frac1D \nabla F\cdot \nabla (p-p_*)\Big\|_{L^\infty} + \Big\| \frac{b_{5}(r)}{D}(L-L_*)\Big\|_{L^\infty}\\
        &\hspace{4em}+ \Big\|\frac{b_{6}(r)}{D}(H-H_*)\Big\|_{L^\infty} + \Big\|\frac{b_{7}(r)}{D}(F-F_*)\Big\|_{L^\infty}\\
 \;\le&\;\; \Big(\frac{2C_s}{\epsilon D}  \t C^2 + C M_1\Big) \delta |\tau|\|S\|_{C^{4+\alpha}(\Sigma)}.
    \end{split}
\end{equation}
We use
\begin{equation}
    \label{cos2}
    \phi_2(r)=M_1 |\tau|\|S\|_{C^{4+\alpha}(\Sigma)}\cos\Big(\frac{M_2(1-r)}{\sqrt{\epsilon}}\Big), \mm M_2 = \frac12 \min\Big(\sqrt{\beta_1},\sqrt{\beta_2}\Big),
\end{equation}
as the supersolution with $M_1$ given by
 \be  \label{m1}
  M_1 =  \max\Big(\frac{8}{\beta_1}\t C, \; \frac{8}{\beta_2}\t C,\;
  \frac{32 C_s}{\beta_1 D} \t C^2,\; \frac{32 C_s}{\beta_2 D} \t C^2 \Big)  .
  \ee 
Taking derivatives of $\phi_2$ gives us
\begin{eqnarray*}
    &\phi_2'(r) = M_1 \frac{M_2}{\sqrt{\epsilon}} |\tau| \|S\|_{C^{4+\alpha}(\Sigma)} \sin\Big(\frac{M_2(1-r)}{\sqrt{\epsilon}}\Big),\mm
    \phi_2''(r) = -M_1 \frac{M_2^2}{\epsilon} |\tau| \|S\|_{C^{4+\alpha}(\Sigma)} \cos\Big(\frac{M_2(1-r)}{\sqrt{\epsilon}}\Big).
\end{eqnarray*}
It is clear that $\phi_2'(1)=0$. Moreover, for the boundary condition at $\Gamma_\tau: r=1-\epsilon+\tau S$,
\begin{equation*}
\begin{split}
    \Big(\frac{\p\phi_2}{\p{\bm n}} + \beta_1 \phi_2\Big)\Big|_{\Gamma_\tau} &= -\phi_2'(1-\epsilon+\tau S) + \beta_1 \phi_2(1-\epsilon+\tau S) + O(|\tau S'|^2)\\
    &= \Big[- \frac{M_2}{\sqrt{\epsilon}}\sin\Big(\frac{M_2(\epsilon-\tau S)}{\sqrt{\epsilon}}\Big) + \beta_1 \cos\Big(\frac{M_2(\epsilon-\tau S)}{\sqrt{\epsilon}}\Big) \Big] M_1 |\tau| \|S\|_{C^{4+\alpha}(\Sigma)} + O(|\tau S'|^2).
    \end{split}
\end{equation*}
Since $\sin x \le x$ and $\cos x\ge 1 - \frac{x^2}2$ for $x\ge 0$,
we have, for $0<|\tau|\ll \epsilon$ and $\epsilon$ small,
\[
\frac{M_2}{\sqrt{\epsilon}}\sin\Big(\frac{M_2(\epsilon-\tau S)}{\sqrt{\epsilon}}\Big)\le M_2^2\Big(1-\frac\tau\epsilon S\Big)
\le 2 M_2^2, \mm\cos\Big(\frac{M_2(\epsilon-\tau S)}{\sqrt{\epsilon}}\Big) \ge 1 - \frac{M_2^2}{2\epsilon} (\epsilon^2 +  \tau^2  ) \ge \frac34.
\]
Then 
\begin{equation*}
    \begin{split}
        \Big(\frac{\p\phi_2}{\p{\bm n}} + \beta_1 \phi_2\Big)\Big|_{\Gamma_\tau} \;&\ge\;\; \Big[-2M_2^2 + \frac34 \beta_1\Big]M_1 |\tau|\|S\|_{C^{4+\alpha}(\Sigma)} + O(|\tau S'|^2) \\
        &\ge\;\; \frac14\beta_1 M_1 |\tau|\|S\|_{C^{4+\alpha}(\Sigma)} + O(|\tau S'|^2)\\
        &\ge\;\; 2\t C |\tau|\|S\|_{C^{4+\alpha}(\Sigma)} + O(|\tau S'|^2)\ge \tilde{C}|\tau|\|S\|_{C^{4+\alpha}(\Sigma)}.
    \end{split}
\end{equation*}
Next we consider the equations \re{lls}, \re{hhs}, \re{ffs} in proving $\phi_2$ is a supersolution. Notice that
\begin{equation*}
    \begin{split}
        -\Delta \phi_2 = -\phi_2''(r) -\frac1r \phi_2'(r) \;&=\;\; M_1\Big[\frac{M_2^2}{\epsilon}\cos\Big(\frac{M_2(1-r)}{\sqrt{\epsilon}}\Big) - \frac{M_2}{\sqrt{\epsilon}r}\sin\Big(\frac{M_2(1-r)}{\sqrt{\epsilon}}\Big)\Big] |\tau| \|S\|_{C^{4+\alpha}(\Sigma)}\\
        &\ge\;\; M_1\Big[\frac{M_2^2}{\epsilon}\frac34 - \frac{2M_2^2}{r}  \Big] |\tau| \|S\|_{C^{4+\alpha}(\Sigma)}\\
%        & \ge M_1\Big[\frac{M_2^2}{\epsilon}\frac34 - M_2^2 \frac{\epsilon -\tau S}\epsilon \Big] |\tau| \|S\|_{C^{4+\alpha}(\Sigma)}\\
      & \ge\;\; M_1\Big[\frac{M_2^2}{\epsilon}\frac34 - 4 M_2^2  \Big] |\tau| \|S\|_{C^{4+\alpha}(\Sigma)}, \m r\in [1-\epsilon+\tau S,1] ,
    \end{split}
\end{equation*}
For \re{bound1}, it is clear that $-\Delta\phi_2 \ge \max\{|\Delta (L-L_*)|, |\Delta(H-H_*)|\}$ since the leading order term in $-\Delta \phi_2$ is $\frac{1}{\epsilon}$ and we can take $\epsilon$ small. Hence $\phi_2$ is a supersolution for $L-L_*$ as well as for $H-H_*$. For \re{bound2}, as is shown, the leading order term in bounding \re{bound2} is $\frac{2C_s\tilde{C}^2}{\epsilon D}|\tau|\|S\|_{C^{4+\alpha}(\Sigma)}$; on the other hand, 
$$-\Delta \phi_2 \ge M_1\Big[\frac{M_2^2}{\epsilon}\frac34 - \frac{2M_2^2}{r}  \Big] |\tau| \|S\|_{C^{4+\alpha}(\Sigma)}\ge \frac{1}{\epsilon}\frac12 M_1 M_2^2|\tau| \|S\|_{C^{4+\alpha}(\Sigma)} \ge \frac{4C_s\tilde{C}^2}{\epsilon D}|\tau|\|S\|_{C^{4+\alpha}(\Sigma)}; $$
the extra term $\frac1D \nabla p_*\cdot\nabla \phi_2$ is of order
$O(1/\sqrt\epsilon)$ and therefore does not cause a problem.
Thus $\phi_2$ is also a supersolution for $F-F_*$. 

From the above analysis, we see that the choice of $M_1$ and $M_2$ depends only on $\beta_1$, $\beta_2$, $\tilde{C}$ and $C_s$, and is therefore independent of $\epsilon$ and $\tau$. By the maximum principle,
\begin{eqnarray*}
    \|L-L_*\|_{L^\infty(\Omega_\tau)} &\le& M_1|\tau|\|S\|_{C^{4+\alpha}(\Sigma)},\\
    \|H-H_*\|_{L^\infty(\Omega_\tau)} &\le& M_1|\tau|\|S\|_{C^{4+\alpha}(\Sigma)},\\
    \|F-F_*\|_{L^\infty(\Omega_\tau)} &\le& M_1|\tau|\|S\|_{C^{4+\alpha}(\Sigma)}.
\end{eqnarray*}
Using a scaling argument, we further have
\begin{equation*}
    \|\nabla(F-F_*)\|_{L^\infty(\Omega_\tau)} \;\le\; \frac{M_1 C_s}{\epsilon}|\tau|\|S\|_{C^{4+\alpha}(\Sigma)}.
\end{equation*}
These estimates are valid in the case $\delta =0$ without the assumptions 
\re{assum1}--\re{assum4} since the right-hand sides are all zero in this case.
Conditions (i) and (ii) of Lemma \ref{con} are therefore
satisfied for the vectors
$\Big\{ \frac1{M_1}\|L-L_*\|_{L^\infty},
\frac1{M_1}\|H-H_*\|_{L^\infty}, \frac1{M_1}\|F-F_*\|_{L^\infty}, \frac{\epsilon}{M_1 C_s}\|\nabla(F-F_*)\|_{L^\infty},
\frac1{2\t C}\|p-p_*\|_{L^\infty},
\frac\epsilon{2 C_s\t C}\|\nabla (p- p_*)\|_{L^\infty},
\Big\}$. Since condition (iii) is obvious, we finish the proof. 
\end{proof}

\begin{rem}\label{remfirst}
Based on Lemma \ref{first}, if we further apply the Schauder estimates on the equations for $L-L_*$, $H-H_*$, $F-F_*$, and $p-p_*$, we can actually obtain
$$\|L-L_*\|_{C^{4+\alpha}(\bar{\Omega}_\tau)} + \|H-H_*\|_{C^{4+\alpha}(\bar{\Omega}_\tau)} + \|F-F_*\|_{C^{4+\alpha}(\bar{\Omega}_\tau)} + \|p-p_*\|_{C^{2+\alpha}(\bar{\Omega}_\tau)} \le C|\tau|\|S\|_{C^{4+\alpha}(\Sigma)},$$
where $C$ is independent of $\tau$, but is dependent upon $\epsilon$.
\end{rem}

\subsection{Computation of $L_1$, $H_1$, $F_1$ and $p_1$} 
{In general, if $f(y)$ ($y\in R^N$, $f\in R^M$)  is a $C^2$ function with bounded second order 
derivatives, then we have the Taylor's expansion:
\be \label{lin}
\begin{array}{rcl}
   f(y) - f(y_*) & = & \dis \int_0^1 \frac{d}{dt} f\big(t y + (1-t) y_*\big) dt 
   \; = \;  \Big(\int_0^1 \nabla f\big(t y + (1-t) y_*\big) dt\Big) \cdot (y-y_*)\\
    & = &  \nabla f(y_*)\cdot  (y-y_*) + R,
 \end{array}
\ee
where the remainder $R$, given by 
$
 R  = \dis   \int_0^1 \Big(\nabla f \big(t y + (1-t) y_*\big)    -\nabla f \big(  y_*\big)  \Big) dt \cdot (y-y_*),
$
satisfies
\be \label{R0}
 |R| \le \int_0^1\|D^2 f\|_{L^\infty} |y-y_*|\; tdt \cdot |y-y_*| = \frac12\|D^2 f\|_{L^\infty} |y-y_*|^2.
\ee
Thus we have:
\begin{lem} \label{lin5}
Suppose $\mathscript{P}$ is a linear operator, $\mathscript{P}[y] = f(y)$, $\mathscript{P}[y_*] = f(y_*)$. Let $y_1$ be the linearized solution, i.e.,
 $\mathscript{P}[y_1] =  \nabla f(y_*) \cdot y_1 $. Then 
\be 
 \mathscript{P}[y-y_*-\tau y_1] = %f(y) - f(y_*) - \tau  \nabla f(y_*)\cdot y_1 = 
 \nabla f(y_*)\cdot  (y-y_* -\tau y_1) + R 
 %\triangleq \text{RHS}
 ,
\ee
where by \re{lin},
\be \label{estimate0}
 |R| \;\le\; % |  \nabla f(y_*)|\cdot |y-y_*-\tau y_1| + 
 \frac12\|D^2 f\|_{L^\infty} |y-y_*|^2.
\ee
\end{lem}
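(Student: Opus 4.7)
The proof will be a one-line manipulation that combines the linearity of $\mathscript{P}$ with the Taylor identity \re{lin} established in the paragraph preceding the lemma. The plan is as follows. First, by linearity of $\mathscript{P}$, I split
$$\mathscript{P}[y - y_* - \tau y_1] \;=\; \mathscript{P}[y] - \mathscript{P}[y_*] - \tau\, \mathscript{P}[y_1].$$
Using the three hypotheses $\mathscript{P}[y] = f(y)$, $\mathscript{P}[y_*] = f(y_*)$, and $\mathscript{P}[y_1] = \nabla f(y_*) \cdot y_1$, the right-hand side collapses to
$$f(y) - f(y_*) - \tau\, \nabla f(y_*) \cdot y_1.$$

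Next I invoke \re{lin}, which writes $f(y) - f(y_*) = \nabla f(y_*) \cdot (y - y_*) + R$ with $R$ the integral remainder defined immediately above \re{R0}. Substituting and then gathering the two $\nabla f(y_*)$-terms,
$$\nabla f(y_*) \cdot (y - y_*) - \tau\, \nabla f(y_*) \cdot y_1 \;=\; \nabla f(y_*) \cdot (y - y_* - \tau y_1),$$
which yields the claimed identity. The pointwise remainder bound \re{estimate0} is then inherited verbatim from \re{R0}, since the $R$ produced here is literally the same integral remainder.

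There is no substantive obstacle: the lemma is a bookkeeping device isolating the quadratic-in-$(y-y_*)$ part of the nonlinearity from its linearization, so that in subsequent sections this part can be absorbed as a controlled forcing term when one estimates the second-order correction $y - y_* - \tau y_1$. The only thing to verify is compatibility of the three hypotheses on $\mathscript{P}$, which is automatic from their statement. The utility of the lemma — and the reason to state it in this abstract form — is that the quadratic estimate \re{estimate0} is \emph{independent} of the specific linear operator $\mathscript{P}$ and the specific $y_1$, so it will apply uniformly to each of the four coupled equations for $L$, $H$, $F$, $p$ when the expansions \re{expand1}--\re{expand4} are rigorously justified in the next step.
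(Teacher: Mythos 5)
Your proof is correct and is exactly the argument the paper intends: linearity of $\mathscript{P}$ decomposes $\mathscript{P}[y-y_*-\tau y_1]$, the three hypotheses are substituted, the Taylor identity \re{lin} supplies $f(y)-f(y_*)=\nabla f(y_*)\cdot(y-y_*)+R$, the gradient terms are regrouped, and the bound \re{estimate0} is inherited from \re{R0}. The paper simply leaves this routine computation implicit (saying only ``where by \re{lin}''), and you have spelled it out faithfully.
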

%\begin{proof} Clearly, RHS$=\nabla f(y_0)\cdot  (y-y_0 -\tau y_1) + R $,
%so that by   \re{R0}, the estimate \re{estimate0} is valid.
%\end{proof}

%The discussion is similar if $f$ also involves the gradient of $y$.

Later on we shall apply this formula with $y = (L,H,F,p)$ and
$y_*=(L_*,H_*,F_*, p_*)$.
Notice that by Lemma \ref{first}, $|y-y_*| = O(\tau)$, thus we already have $|y-y_*|^2 = O(\tau^2)$. In what follows, we only need to produce correction terms for 
the linear part of the system, i.e., 
%Next, 
we shall compute the functions for $L_1$, $H_1$, $F_1$ and $p_1$. Substituting \re{expand1} -- \re{expand4} into \re{b1} -- \re{b7}, and dropping higher order terms of $\tau$, we obtain the linearized system. This is equivalent to taking total differential of the right-hand side $f$ with respect to $L, H, F$ and $p$. If we write $f =(f^L, f^H, f^F, f^p)^T$, then, from \re{e1}, 
$f^L(L, H, F, p) = - k_1\frac{(M_0-F)L}{K_1+L} - \rho_1 L$, so that
\[
 \nabla f^L(L_*, H_*, F_*, p_*)\cdot (L_1, H_1, F_1, p_1)
 = \mbox{ $ -k_1\frac{(M_0-F_*)K_1L_1}{(K_1+L_*)^2} + k_1\frac{ L_* F_1}{K_1+L_*} -\rho_1L_1,$}
\]
and this is the right-hand side of the equation for $L_1$.
Similar equations are derived for $H_1$ and $p_1$. The right-hand side 
for $F_1$ is similar, but we have to take care of the additional gradient 
terms in the left-hand side. In summary, we obtain the following linearized system on $\Omega_*=\{1-\epsilon<r<1\}$:
\begin{eqnarray}
    &-\Delta L_1 = %-k_1\frac{(M_0-F_*)L_1}{K_1+L_*}+k_1\frac{F_1 L_*}{K_1+L_*} + k_1\frac{(M_0-F_*)L_*L_1}{(K_1+L_*)^2}-\rho_1L_1 \hspace{2em} &\text{in }\Omega_*,
    -k_1\frac{(M_0-F_*)K_1L_1}{(K_1+L_*)^2} + k_1\frac{L_* F_1}{K_1+L_*} -\rho_1L_1
    \hspace{2em} &\text{in }\Omega_*,\label{L1}\\
    &-\Delta H_1 %= -k_2\frac{H_1 F_*}{K_2+F_*} - k_2\frac{H_*F_1}{K_2+F_*} + k_2\frac{H_*F_*F_1}{(K_2+F_*)^2}-\rho_2 H_1 
     = -k_2 \frac{K_2 H_* F_1}{(K_2+F_*)^2}-k_2\frac{ F_* H_1}{K_2+F_*}-\rho_2 H_1 
    \hspace{2em} &\text{in }\Omega_*,\label{H1}\\
    &-D\Delta F_1 - \nabla F_1\cdot \nabla p_*-\nabla F_*\cdot \nabla p_1= % f_3(L_1,H_1,F_1)
    k_1 \frac{(M_0-F_*)K_1L_1}{(K_1+L_*)^2}
    - k_1\frac{F_1L_*}{K_1+L_*} 
    +\cdots 
   \hspace{2em} &\text{in }\Omega_*,\label{F1}\\
    &-\Delta p_1=\frac{1}{M_0}\Big[\lambda\frac{(M_0-F_*)L_1}{\gamma+H_*} -\lambda\frac{L_*F_1}{\gamma+H_*} -\lambda\frac{(M_0-F_*)L_*H_1}{(\gamma+H_*)^2} + (\rho_3-\rho_4)F_1\Big] \hspace{2em} &\text{in }\Omega_*,\label{eqnp1}\\
    &\frac{\p L_1}{\p r}=\frac{\p H_1}{\p r}=\frac{\p F_1}{\p r}=\frac{\p p_1}{\p r}=0 \hspace{2em}& r=1,\label{bdy1}\\
    &-\frac{\p L_1}{\p r}+\beta_1 L_1=\Big(\frac{\p^2 L_*}{\p r^2}-\beta_1\frac{\p L_*}{\p r}\Big)\Big|_{r=1-\epsilon} S(\theta) &r=1-\epsilon,\label{bdyL1}\\
    &-\frac{\p H_1}{\p r}+\beta_1 H_1=\Big(\frac{\p^2 H_*}{\p r^2}-\beta_1\frac{\p H_*}{\p r}\Big)\Big|_{r=1-\epsilon} S(\theta) &r=1-\epsilon,\label{bdyH1}\\
    &-\frac{\p F_1}{\p r}+\beta_2 F_1=\Big(\frac{\p^2 F_*}{\p r^2}-\beta_2\frac{\p F_*}{\p r}\Big)\Big|_{r=1-\epsilon} S(\theta) &r=1-\epsilon,\label{bdyF1}\\
    &p_1 = \frac{1}{(1-\epsilon)^2}(S+S_{\theta\theta}) &r=1-\epsilon. \label{bdyp1}
\end{eqnarray}
\void{
where
\begin{equation}\label{complex}
    f_3 = k_1 \frac{(M_0-F_*)K_1L_1}{(K_1+L_*)^2}
    - k_1\frac{F_1L_*}{K_1+L_*} 
    +\cdots 
\end{equation}
is a complex combination of $L_1, H_1,$ and $F_1$; the first three terms listed in \re{complex} all come from the linearization of $k_1\frac{(M_0-F)L}{K_1+L}$.
}

Using the same techniques as in the proof of Lemma \ref{first}, also recalling Remark \ref{remfirst}, we can derive $L_1, H_1, F_1 \in C^{4+\alpha}(\bar{\Omega}_\tau)$ and $p_1\in C^{2+\alpha}(\bar{\Omega}_\tau)$; their Schauder estimates may depend on $\epsilon$, {\em but it is crucial that the $L^\infty$ estimates are independent of $\epsilon$ and $\tau$.}

Notice that $L_1$, $H_1$, $F_1$ and $p_1$ are all defined in $\Omega_*$, while $L-L_*$, $H-H_*$, $F-F_*$ and $p-p_*$ are defined in $\Omega_\tau$. We would now like to transform $L_1$, $H_1$, $F_1$ and $p_1$ from $\Omega_*$ to $\Omega_\tau$ so that we are able to work on the same domain to derive second-order $\tau$ estimates.
%for $L-L_*-\tau L_1$, $H-H_*-\tau H_1$, $F-F_*-\tau F_1$ and $p-p_*-\tau p_1$. 
To do that, we define a transform
\begin{equation}\label{Ytau}
    Y_\tau:\; (r,\theta)=Y_\tau(\bar{r},\bar{\theta})=\Big( \frac{(\bar{r}-1)(\epsilon-\tau S)}{\epsilon} + 1\,,\,\theta\Big)
\end{equation}
and let 
\begin{eqnarray}
    \label{trans1}&\bar{L}_1(r,\theta)=L_1(Y_\tau^{-1}(r,\theta)), \hspace{2em} &\bar{H}_1(r,\theta)=H_1(Y_\tau^{-1}(r,\theta)),\\
    \label{trans2}&\bar{F}_1(r,\theta)=F_1(Y_\tau^{-1}(r,\theta)), \hspace{2em} &\bar{p}_1(r,\theta)=p_1(Y_\tau^{-1}(r,\theta)),
\end{eqnarray}
for $(r,\theta)\in \Omega_\tau$. Similar as using the Hanzawa transformation in \cite{CE3, angio1, Fengjie, Hongjing, Zejia, zhao2}, the error incurred from applying $Y_\tau$ is less than $|\tau S|$.

\subsection{Second-order $\tau$ estimates}
The first step in deriving second-order $\tau$ estimates is to calculate the equations for $L-L_*-\tau \bar{L}_1$, $H-H_*-\tau \bar{H}_1$, $F-F_*-\tau \bar{F}_1$ and $p-p_*-\tau \bar{p}_1$. % The detailed equations are very complicated, 
Here we shall only show the derivations of the equation for $F-F_*-\tau \bar{F}_1$, since the equation for $F$ is more complex than those for other variables.

Combining the equations for $F_*, F,$ and $F_1$ respectively in \re{r3} \re{b3} and \re{F1}, we derive
\begin{equation}\label{fff}
    -D \Delta(F-F_*-\tau F_1) - \nabla F\cdot \nabla p + \nabla F_*\cdot \nabla p_* + \tau \nabla F_1 \cdot \nabla p_* + \tau \nabla F_* \cdot \nabla p_1 = \text{RHS}.
\end{equation}
By Lemma \ref{lin5}, the right-hand side of \re{fff} satisfies
\begin{equation*}
    \text{RHS} = [\text{\Rmnum{1}}] + [\text{\Rmnum{2}}],
\end{equation*}
where \Rmnum{1} is written as, for bounded functions $b_{11}(r), b_{12}(r),$ and $b_{13}(r)$,
\begin{equation*}
    \text{\Rmnum{1}} = b_{11}(r)(L-L_*-\tau L_1) + b_{12}(r)(H-H_*-\tau H_1) + b_{13}(r)(F-F_*-\tau F_1);
\end{equation*}
and \Rmnum{2} is bounded by  $|(L-L_*,H-H_*,F-F_*)|^2$, hence
\begin{equation*}
    \|\text{\Rmnum{2}}\|_{L^\infty} \le C|\tau|^2 \|S\|_{C^{4+\alpha}(\Sigma)}.
\end{equation*}

\void{
\begin{equation*}
    \text{RHS} = k_1\frac{(M_0-F)L}{K_1+L}
     - k_1\frac{(M_0-F_*)L_*}{K_1+L_*} - \tau\Big[k_1 \frac{(M_0-F_*)K_1L_1}{(K_1+L_*)^2}
    - k_1\frac{F_1L_*}{K_1+L_*}\Big] + \cdots.
\end{equation*}
}

We then turn to the left-hand side of equation \re{fff}. The terms involving the gradients can be rearranged as
\begin{equation*}
    \begin{split}
        &- \nabla F\cdot \nabla p + \nabla F_*\cdot \nabla p_* + \tau \nabla F_1 \cdot \nabla p_* + \tau \nabla F_* \cdot \nabla p_1\\
 %       =& -\nabla p_* \cdot \nabla(F-F_*-\tau F_1) -\nabla F\cdot \nabla p + \nabla F \cdot \nabla p_* + \tau \nabla F_* \cdot \nabla p_1\\
 %       =& -\nabla p_* \cdot \nabla(F-F_*-\tau F_1) - \nabla F\cdot \nabla(p-p_*-\tau p_1) + \tau \nabla F_* \cdot \nabla p_1 - \tau \nabla F\cdot \nabla p_1\\
        =& -\nabla p_* \cdot \nabla(F-F_*-\tau F_1) - \nabla F\cdot \nabla(p-p_*-\tau p_1) - \tau \nabla(F-F_*)\cdot \nabla p_1.
    \end{split}
\end{equation*}  
% OK, verified
By Lemma \ref{first}, 
\begin{equation}
\label{last}    \|\nabla (F-F_*)\|_{L^\infty} \le \frac{C}{\epsilon}|\tau|\|S\|_{C^{4+\alpha}(\Sigma)};
\end{equation}
furthermore, we can derive from \re{eqnp1} and \re{bdyp1} that
\begin{equation*}
    |\Delta (p_1 - (S+S_{\theta \theta}))| \le C, \quad \text{and} \quad \|p_1-(S+S_{\theta\theta})\|_{C^{1+\alpha}(\{r=1-\epsilon\})} \le C\epsilon,
\end{equation*}
as $S\in C^{4+\alpha}$; using the same technique as in Lemma \ref{lem3.1}, we shall get
\begin{equation*}
    \|\nabla (p_1 - (S+S_{\theta\theta}))\|_{L^\infty(\Omega_*)} \le C,
\end{equation*}
hence
\begin{equation*}
    \|\nabla p_1\|_{L^\infty(\Omega_*)} \le C,
\end{equation*}
for a constant $C$ which is independent of $\epsilon$ and $\tau$. 
Together with \re{last}, we derive %it follows that
\begin{equation*}
    \|\tau \nabla(F-F_*) \cdot \nabla p_1\|_{L^\infty} \le \frac{C}{\epsilon}|\tau|^2 \|S\|_{C^{4+\alpha}(\Sigma)}.
\end{equation*}
% OK up to here

\void{
We now turn our attention to the right-hand side of equation \re{fff}  by rearranging it as: %On the other hand, we also rearrange the right-hand side of equation \re{fff} as:
{\small
\begin{equation*}
    \begin{split}
        \text{RHS} =&\; k_1\frac{(M_0-F)L}{K_1+L}
     - k_1\frac{(M_0-F_*)L_*}{K_1+L_*} - \tau\Big[k_1 \frac{(M_0-F_*)K_1L_1}{(K_1+L_*)^2}
    - k_1\frac{F_1L_*}{K_1+L_*}\Big] + \cdots\\
    =&\; k_1\frac{(M_0-F)K_1(L-L_*)}{(K_1+L)(K_1+L_*)} - k_1 \frac{L_*(F-F_*)}{K_1+L_*} - \tau\Big[k_1 \frac{(M_0-F_*)K_1L_1}{(K_1+L_*)^2}
    - k_1\frac{F_1L_*}{K_1+L_*}\Big] + \cdots\\
    =& -k_1\frac{K_1(F-F_*)(L-L_*)}{(K_1+L)(K_1+L_*)} + k_1\frac{K_1(M_0-F_*)}{K_1+L_*}\Big[\frac{L-L_*}{K_1+L}-\frac{\tau L_1}{K_1+L_*}\Big]-k_1\frac{L_*(F-F_*-\tau F_1)}{K_1+L_*}+\cdots\\
    =& -k_1\frac{K_1(F-F_*)(L-L_*)}{(K_1+L)(K_1+L_*)} + k_1\frac{K_1(M_0-F_*)}{K_1+L_*}\Big[-\frac{(L-L_*)^2}{(K_1+L)(K_1+L_*)}+\frac{L-L_*-\tau L_1}{K_1+L_*}\Big]\\
    &\hspace{29.3em} -k_1\frac{L_*(F-F_*-\tau F_1)}{K_1+L_*}+\cdots\\
    =&\;\Big[k_1\frac{K_1(M_0-F_*)}{(K_1+L_*)^2}(L-L_*-\tau L_1) -k_1\frac{L_*}{K_1+L_*}(F-F_*-\tau F_1) + \cdots \Big]\\ &\hspace{13em} +\Big[-k_1\frac{K_1(F-F_*)(L-L_*)}{(K_1+L)(K_1+L_*)} -k_1\frac{K_1(M_0-F_*)(L-L_*)^2}{(K_1+L)(K_1+L_*)^2} + \cdots \Big];
    \end{split}
\end{equation*}}

\noindent in the above calculations, we only explicitly write the terms involving the coefficient $k_1$; other terms are represented by ``$\cdots$"  and can be dealt with in the same manner. As a result, the right-hand side of equation \re{fff} can be rearranged as
\begin{equation*}
    \text{RHS} = [\text{\Rmnum{1}}] + [\text{\Rmnum{2}}],
\end{equation*}
where \Rmnum{1} is written as, for bounded functions $b_{11}(r), b_{12}(r),$ and $b_{13}(r)$,
\begin{equation*}
    \text{\Rmnum{1}} = b_{11}(r)(L-L_*-\tau L_1) + b_{12}(r)(H-H_*-\tau H_1) + b_{13}(r)(F-F_*-\tau F_1);
\end{equation*}
and \Rmnum{2} is a combination of products of $L-L_*$, $H-H_*$, and $F-F_*$ ; by Lemma \ref{first}, 
\begin{equation*}
    \|\text{\Rmnum{2}}\|_{L^\infty} \le C|\tau|^2 \|S\|_{C^{4+\alpha}(\Sigma)}.
\end{equation*}
}

From the above analysis, we obtain the equation for $F-F_*-\tau F_1$,
\begin{equation*}
\begin{split}
    &-D\Delta (F-F_*-\tau F_1) - \nabla p_*\cdot \nabla (F-F_*-\tau F_1) \\
    &\hspace{10em}= \nabla F\cdot \nabla(p-p_*-\tau p_1) - \tau \nabla(F-F_*)\cdot \nabla p_1 + [\text{\Rmnum{1}}] + [\text{\Rmnum{2}}].
    \end{split}
\end{equation*}
Now we recall the transform $Y_\tau$ in \re{Ytau} and the change of variables in \re{trans1} and \re{trans2}, we can derive the equation for $F-F_*-\tau \bar{F}_1$, namely,
\begin{equation}\label{ffbf}
\begin{split}
    &-D\Delta (F-F_*-\tau \bar{F}_1) - \nabla p_*\cdot \nabla (F-F_*-\tau \bar{F}_1) \\
    &\hspace{10em}= \nabla F\cdot \nabla(p-p_*-\tau \bar{p}_1) - \tau \nabla(F-F_*)\cdot \nabla \bar{p}_1 + [\text{\Rmnum{1}}] + [\text{\Rmnum{2}}] + \tau f_4,
    \end{split}
\end{equation}
where $f_4$ is generated by the tiny changing of domain from $\Omega_*$ to $\Omega_\tau$ in applying the transformation $Y_\tau$, and it contains at most second derivatives of $\tau S$, hence
$$\|\tau f_4\|_{L^\infty(\Omega_\tau)}\le |\tau|\cdot C|\tau|\|S\|_{C^{2+\alpha}(\Omega_\tau)} \le C|\tau|^2 \|S\|_{C^{4+\alpha}(\Omega_\tau)}.$$
Combining with the estimates we derived before, we have
\begin{equation*}
    \begin{split}
        &\Big| \Delta(F-F_*-\tau \bar{F}_1) + \frac{1}{D}\nabla p_* \cdot \nabla (F-F_*-\tau \bar{F}_1)\Big|\; \le \; \Big\|\frac1D \nabla F \cdot \nabla (p-p_*-\tau \bar{p}_1)\Big\|_{L^\infty} \\
        &\hspace{15em}+ \Big\|\frac{b_{11}(r)}{D}(L-L_*-\tau \bar{L}_1)\Big\|_{L^\infty} + \Big\|\frac{b_{12}(r)}{D}(H-H_*-\tau \bar{H}_1)\Big\|_{L^\infty} \\
        &\hspace{15em}+\Big\|\frac{b_{13}(r)}{D}(F-F_*-\tau \bar{F}_1)\Big\|_{L^\infty} + \frac{C}{\epsilon} |\tau|^2 \|S\|_{C^{4+\alpha}(\Sigma)}.
    \end{split}
\end{equation*}
Notice that the above inequality present similar structure as \re{bound2}, hence we can use the same technique and similar supersolutions to establish
\begin{lem}\label{second1}
  Fix $\epsilon$ sufficiently small, if $|\tau|\ll \epsilon$ and $\|S\|_{C^{4+\alpha}(\Sigma)}\le 1$, then we have
 \begin{eqnarray*}
     &\max\{\|L-L_*-\tau \bar{L}_1\|_{L^\infty(\Omega_\tau)} , \|H-H_*-\tau \bar{H}_1\|_{L^\infty(\Omega_\tau)}\}\;\le\; C|\tau|^2 \|S\|_{C^{4+\alpha}(\Sigma)},\\
     &\max\{\|F-F_*-\tau \bar{F}_1\|_{L^\infty(\Omega_\tau)} , \|p-p_*-\tau \bar{p}_1\|_{L^\infty(\Omega_\tau)}\}\;\le\; C|\tau|^2 \|S\|_{C^{4+\alpha}(\Sigma)},\\
     & \max\{ \|\nabla(F-F_*-\tau \bar{F}_1)\|_{L^\infty(\Omega_\tau)}, \|\nabla(p-p_*-\tau \bar{p}_1)\|_{L^\infty(\Omega_\tau)}\}\;\le\; \frac{C}\epsilon|\tau|^2\|S\|_{C^{4+\alpha}(\Sigma)},
 \end{eqnarray*}
 where $C$ is independent of $\epsilon$ and $\tau$.
\end{lem}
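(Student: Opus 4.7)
The plan is to mirror the strategy already used in Lemma \ref{first}, but now with the linearized correctors $\tau \bar L_1, \tau \bar H_1, \tau \bar F_1, \tau \bar p_1$ subtracted off so that all right-hand sides and boundary data become $O(\tau^2)$. First I would derive the governing equations for the four differences $L-L_*-\tau \bar L_1$, $H-H_*-\tau \bar H_1$, $F-F_*-\tau \bar F_1$, $p-p_*-\tau \bar p_1$ on $\Omega_\tau$. For $L$, $H$, $p$ this is analogous to the computation already performed for $F$ in \re{ffbf}: subtract the linearized system \re{L1}--\re{eqnp1} (pulled back to $\Omega_\tau$ via $Y_\tau$) from \re{b1}--\re{b4} and apply Lemma \ref{lin5}. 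Each equation then splits into a term of type \Rmnum{1} that is linear in the four second-order differences with bounded coefficients, a term of type \Rmnum{2} that is quadratic in $(L-L_*,H-H_*,F-F_*)$ and therefore $O(\tau^2)$ by Lemma \ref{first}, and an $O(|\tau|^2\|S\|_{C^{4+\alpha}})$ error $\tau f$ coming from the pullback $Y_\tau$ (which moves points by at most $|\tau S|$ and hence changes derivatives by the right order).

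Next I would handle the boundary conditions on $\Gamma_\tau$. The flux conditions \re{bdyL1}--\re{bdyF1} for $\bar L_1,\bar H_1,\bar F_1$ were chosen exactly so that subtracting them from the boundary conditions for $L,H,F$ on $\Gamma_\tau$ (and expanding to second order in $\tau S$) leaves a remainder controlled by $C|\tau|^2\|S\|_{C^{4+\alpha}(\Sigma)}$, using boundedness of the third radial derivatives $L_*''',H_*''',F_*'''$ and $\bar L_1'',\bar H_1'',\bar F_1''$ near $r=1-\epsilon$. For $p$, the condition \re{bdyp1} together with the full mean-curvature formula \re{kappa}, whose quadratic piece is of order $|\tau|^2\|S\|_{C^{3+\alpha}}$, gives $|(p-p_*-\tau\bar p_1)|_{\Gamma_\tau}|\le C|\tau|^2\|S\|_{C^{4+\alpha}(\Sigma)}$. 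The outer boundary data at $r=1$ are exactly zero.

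With the equations and boundary data in hand I would run the same supersolution/continuation argument from Lemma \ref{first}. That is, I would first handle $p-p_*-\tau\bar p_1$: the Laplacian is controlled by $C|\tau|^2\|S\|_{C^{4+\alpha}}$ (modulo a linear dependence on the other three differences that the continuation lemma absorbs), so the $\cos$-type barrier $\phi_1$ from \re{cos1}, rescaled by $|\tau|^2$ in place of $|\tau|$, majorizes it, and a scaling Schauder argument identical to the one giving \re{p-reg} upgrades this to $\|\nabla(p-p_*-\tau\bar p_1)\|_{L^\infty}\le (C/\epsilon)|\tau|^2\|S\|_{C^{4+\alpha}}$. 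Then for the triple $(L-L_*-\tau\bar L_1, H-H_*-\tau\bar H_1, F-F_*-\tau\bar F_1)$ I would use the $\cos(M_2(1-r)/\sqrt\epsilon)$ barrier $\phi_2$ from \re{cos2}, again with $|\tau|$ replaced by $|\tau|^2$ and with $M_1$ chosen as in \re{m1} (now also absorbing the universal factor coming from the $(C/\epsilon)|\tau|^2$ term in the right-hand side of the $F$-equation via the fact that $\phi_2''\sim (1/\epsilon)\phi_2$). The continuation parameter $\delta\in[0,1]$ in front of the coupling terms is needed exactly because these three differences feed into one another through the type-\Rmnum{1} terms; the argument verifies conditions (i)--(iii) of Lemma \ref{con} for the rescaled vector of $L^\infty$ and $\nabla$ norms, just as in Lemma \ref{first}.

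The main obstacle, as in the first-order estimate, is the $F$-equation: the gradient term $\tau\nabla(F-F_*)\cdot\nabla\bar p_1$ contributes a factor $C/\epsilon$ via Lemma \ref{first}, and $\nabla F\cdot\nabla(p-p_*-\tau\bar p_1)$ couples back to $p$ at order $1/\epsilon$ via the scaled Schauder estimate. I would therefore need to make sure the supersolution $\phi_2$, whose $-\Delta$ has leading term $M_1 M_2^2/\epsilon$, dominates these $1/\epsilon$ right-hand sides by choosing $M_1$ proportional to $\tilde C^2 C_s/(\beta_2 D)$ exactly as in \re{m1}. Once the $L^\infty$ bounds are established, the gradient bounds for $F-F_*-\tau\bar F_1$ and $p-p_*-\tau\bar p_1$ follow from the same sub-Schauder rescaling used to prove \re{p-reg} and \re{F-reg}, with the right-hand side now $O(|\tau|^2\|S\|_{C^{4+\alpha}})$, yielding the claimed $(C/\epsilon)|\tau|^2\|S\|_{C^{4+\alpha}}$.
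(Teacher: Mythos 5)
Your proposal follows the paper's strategy exactly: derive the second-order difference equations by applying Lemma \ref{lin5} and the pullback $Y_\tau$ (the paper does this explicitly for the $F$-equation in \re{ffbf} and notes the resulting inequality has the same structure as \re{bound2}), verify that the boundary data are $O(|\tau|^2\|S\|_{C^{4+\alpha}})$, and then re-run the supersolution/continuation argument of Lemma \ref{first} with $|\tau|$ replaced by $|\tau|^2$, followed by the scaled Schauder step for the gradient bounds. This is the intended proof, and your identification of the problematic gradient terms in the $F$-equation and of the boundary-condition expansion to second order fills in precisely the details the paper leaves to the reader.
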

\noindent Following Remark \ref{remfirst}, we shall further have
\begin{lem}\label{second}
  Fix $\epsilon$ sufficiently small, if $|\tau|\ll \epsilon$ and $\|S\|_{C^{4+\alpha}(\Sigma)}\le 1$, then
 \begin{eqnarray}
     \label{lem33-1} &\|L-L_*-\tau \bar{L}_1\|_{C^{4+\alpha}(\bar{\Omega}_\tau)} \le C|\tau|^2\|S\|_{C^{4+\alpha}(\Sigma)},\\
     \label{lem33-2}&\|H-H_*-\tau\bar{H}_1\|_{C^{4+\alpha}(\bar{\Omega}_\tau)} \le C|\tau|^2\|S\|_{C^{4+\alpha}(\Sigma)},\\
    \label{lem33-3}&\|F-F_*-\tau\bar{F}_1\|_{C^{4+\alpha}(\bar{\Omega}_\tau)} \le C|\tau|^2\|S\|_{C^{4+\alpha}(\Sigma)},\\
    \label{lem33-4} &\|p-p_*-\tau\bar{p}_1\|_{C^{2+\alpha}(\bar{\Omega}_\tau)} \le C|\tau|^2\|S\|_{C^{4+\alpha}(\Sigma)},
 \end{eqnarray}
 where $C$ is independent of $\tau$, but is dependent on $\epsilon$.
\end{lem}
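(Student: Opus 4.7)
The plan is to upgrade the $L^\infty$ bounds of Lemma \ref{second1} to the claimed Schauder bounds by applying standard interior and boundary Schauder theory to the linear elliptic boundary value problems satisfied by the four differences $L-L_*-\tau\bar L_1$, $H-H_*-\tau\bar H_1$, $F-F_*-\tau\bar F_1$ and $p-p_*-\tau\bar p_1$. This is the exact analogue of Remark \ref{remfirst}; the only change is that the forcing terms and boundary data are now of size $|\tau|^2\|S\|_{C^{4+\alpha}(\Sigma)}$ rather than $|\tau|\|S\|_{C^{4+\alpha}(\Sigma)}$.

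First I would write each difference as the solution of a linear elliptic problem. For $F$ this equation is already \re{ffbf}; the equations for $L,H,p$ are obtained in the same way. Lemma \ref{lin5} controls the nonlinear remainder by $|y-y_*|^2=O(|\tau|^2)$, the linearized part is grouped into bounded-coefficient terms $b_i(r)(L-L_*-\tau\bar L_1)+\cdots$, and the transport error $\tau f_4$ incurred by the map $Y_\tau$ is again $O(|\tau|^2\|S\|_{C^{4+\alpha}(\Sigma)})$. The boundary data on $\Gamma_\tau$ come from Taylor-expanding $(L_*,H_*,F_*,p_*)$ at $r=1-\epsilon+\tau S$ to second order (and, for $p$, from the quadratic curvature correction $f_1$ in \re{kappa}); these remainders are again $O(|\tau|^2\|S\|_{C^{4+\alpha}(\Sigma)})$.

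Next I would reuse the thin-strip sub-Schauder machinery from the proof of Lemma \ref{lem3.1}: flatten $\Omega_\tau$ to the strip $[\tfrac12,1]\times[0,2\pi/\epsilon]$ via $T_\tau$, so the rescaled operators have $C^\alpha$ coefficients bounded uniformly in $\epsilon$ and $\tau$, and cover the strip by overlapping pieces of bounded $\tilde\theta$-length. Applied to $p-p_*-\tau\bar p_1$ this yields a $C^{2+\alpha}$ estimate in terms of the $L^\infty$ bound, the $C^\alpha$ norm of the right-hand side, and the $C^{2+\alpha}$ norm of the Dirichlet data, all of which are $O(|\tau|^2\|S\|_{C^{4+\alpha}(\Sigma)})$; this proves \re{lem33-4}. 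For $L,H,F$ I would bootstrap: the $C^{2+\alpha}$ control just obtained feeds back into their equations as $C^{2+\alpha}$ right-hand sides, and a second round of higher-order Schauder estimates on the strip raises them to $C^{4+\alpha}$, giving \re{lem33-1}--\re{lem33-3}. The $\epsilon$-dependent constants arise purely from scaling back from the strip to $\bar\Omega_\tau$ (each derivative in $\tilde r$ costs a factor $1/\epsilon$), which is the $\epsilon$-dependence allowed in the statement.

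The main obstacle is the gradient coupling in the $F$-equation, whose right-hand side contains $\nabla F\cdot\nabla(p-p_*-\tau\bar p_1)$ together with $\tau\,\nabla(F-F_*)\cdot\nabla\bar p_1$: the $C^{2+\alpha}$ estimate for $F$ cannot be closed until $\nabla(p-p_*-\tau\bar p_1)$ is already controlled in $C^{1+\alpha}$ at the $|\tau|^2$ level. The remedy is the order used in the first-order step: close the Schauder estimate for $p$ first (its equation depends on $L,H,F$ only algebraically), then insert the resulting $C^{1+\alpha}$ gradient bound into the $F$-equation, and only then bootstrap $L,H,F$ together. One observes finally that $p$ stops at $C^{2+\alpha}$, not $C^{4+\alpha}$, because its Dirichlet data $\frac{1}{(1-\epsilon)^2}(S+S_{\theta\theta})+\tau^2 f_1$ is only $C^{2+\alpha}$ when $S\in C^{4+\alpha}$.
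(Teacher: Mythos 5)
Your proposal matches the paper's approach, which is stated only tersely (``Following Remark~\ref{remfirst}, we shall further have\ldots'') but amounts to exactly the sub-Schauder machinery on the flattened thin strip that you describe, applied to the PDEs already derived for the second-order differences such as \re{ffbf}, with the $L^\infty$ input supplied by Lemma~\ref{second1}. You also correctly supply the detail the paper leaves implicit: that the estimate for $p-p_*-\tau\bar p_1$ must be closed first before its gradient is fed into the $F$-equation, and that the Dirichlet data $\frac{1}{(1-\epsilon)^2}(S+S_{\theta\theta})+\tau^2 f_1$ is what caps $p$ at $C^{2+\alpha}$.
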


The estimates \re{lem33-1} -- \re{lem33-4} are uniformly valid for $|\tau|$ small and $\|S \|_{C^{4+\alpha}(\Sigma)}\le 1$ . By now, we finish the mathematical justification of \re{expand1} -- \re{expand4}, and we are ready to derive the Fr\'echet derivatives of $\mathcal{F}$.

\subsection{Fr\'echet derivative}
Introduce the Banach spaces
\begin{gather}
    X^{l+\alpha} = \{S\in C^{l+\alpha}(\Sigma), S \text{ is $2\pi$-periodic in $\theta$}\},\nonumber\\
    \label{Banach}
    X^{l+\alpha}_1 = \text{closure of the linear space spanned by $\{\cos(n\theta),n=0,1,2,\cdots\}$ in $X^{l+\alpha}$}.
\end{gather}

It can be easily proved that the system \re{b1} -- \re{b7} is even in variable $\theta$ if we assume $S(\theta) = S(-\theta)$. 
Together with \re{lem33-4}, we know that the mapping $\mathcal{F}(\cdot,\mu): X^{l+4+\alpha}_1 \rightarrow X^{l+1+\alpha}_1$ is bounded when $l=0$, and the same argument can show that it is also true for any $l>0$. In order to apply the Crandall-Rabinowitz theorem, we need to verify the continuous differentiability of $\mathcal{F}$. As will be shown in the following lemma, the differentiablity is eventually reduced to the regularity of the corresponding PDEs, and explicit formula is not needed if we are only interested in differentiability;  therefore a similar argument shows that 
this mapping is Fr\'echet differentiable in $(\tilde{R},\mu)$; furthermore $\partial \mathcal{F}(\tilde{R},\mu)/\partial\tilde{R}$ (or $\partial \mathcal{F}(\tilde{R},\mu)/\partial\mu$) is obtained
by solving a linearized problem about $(\tilde{R},\mu)$ with respect to $\tilde{R}$ (or $\mu$). By
using the Schauder estimates we can then further obtain differentiability of
$\mathcal{F}(\tilde{R},\mu)$ to any order.

We now proceed to compute those Fr\'echet derivatives that are crucial in applying the Crandall-Rabinowitz theorem.

\begin{lem}
  \label{FrechetD}
  The Fr\'echet derivatives of $\mathcal{F}(\tilde{R},\mu)$ at the point $(0,\mu)$ are given by
  \begin{eqnarray}
      \label{FreD}
      &\Big[\mathcal{F}_{\tilde{R}}(0,\mu)\Big]S(\theta) = \frac{\p^2 p_*}{\p r^2}\Big|_{r=1-\epsilon}S(\theta) + \frac{\p p_1}{\p r}\Big|_{r=1-\epsilon},\\
      \label{FreDD}
      &\left[\mathcal{F}_{\mu \tilde{R}}(0,\mu)\right]S(\theta) = \frac{\p}{\p \mu}\Big(\frac{\p^2 p_*}{\p r^2}\Big|_{r=1-\epsilon}\Big)S(\theta) + \frac{\p}{\p \mu}\Big(\frac{\p p_1}{\p r}\Big|_{r=1-\epsilon}\Big).
  \end{eqnarray}
\end{lem}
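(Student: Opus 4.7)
The plan is to compute the Fréchet derivative directly from the definition $\mathcal{F}(\tau S,\mu) = -\partial_{\bf n} p|_{\Gamma_\tau}$ by expanding everything in $\tau$, discarding $O(\tau^2)$ terms, and reading off the coefficient of $\tau$. The second-order estimate \re{lem33-4} from Lemma \ref{second} provides the $C^{2+\alpha}$ expansion $p = p_* + \tau \bar{p}_1 + O(\tau^2)$, which is precisely the regularity needed for traces of $\nabla p$ on $\Gamma_\tau$. First I would write $\mathbf{n}$ on $\Gamma_\tau$ in polar coordinates: since $\Gamma_\tau = \{r = 1-\epsilon + \tau S(\theta)\}$ with the outward normal (pointing into the blood region) given by $\mathbf{n} = -\hat{r} + O(\tau S_\theta)/|{\cdots}|$, we have $\mathbf{n} = -\hat{r} + \tau \mathbf{n}_1(\theta) + O(\tau^2)$ with $\mathbf{n}_1$ smooth in $\theta$.

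Next I would evaluate $\nabla p$ on $\Gamma_\tau$. Writing $\nabla p = \nabla p_* + \tau \nabla \bar{p}_1 + O(\tau^2)$ and Taylor-expanding $\nabla p_*$ around $r=1-\epsilon$ gives
\begin{equation*}
\nabla p_*(1-\epsilon+\tau S) = p_*'(1-\epsilon)\hat{r} + \tau S(\theta) p_*''(1-\epsilon)\hat{r} + O(\tau^2).
\end{equation*}
The crucial observation is that $p_*'(1-\epsilon) = 0$ by \re{r7a}, so $\nabla p|_{\Gamma_\tau}$ is itself $O(\tau)$. Consequently the $O(\tau)$ correction $\tau\mathbf{n}_1$ to the normal gets multiplied by an $O(\tau)$ quantity and contributes only at order $O(\tau^2)$. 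Similarly the evaluation of $\bar{p}_1$ at $r = 1-\epsilon + \tau S$ instead of $r = 1-\epsilon$ differs by $O(\tau)$, and this difference multiplies the leading $\hat{r}$ component of $-\mathbf{n}$, producing an $O(\tau^2)$ correction. Collecting the surviving $O(\tau)$ contributions,
\begin{equation*}
-\mathbf{n}\cdot\nabla p\big|_{\Gamma_\tau} = \tau\Big[S(\theta)\,p_*''(1-\epsilon) + \tfrac{\partial p_1}{\partial r}(1-\epsilon,\theta)\Big] + O(\tau^2),
\end{equation*}
where I have used that $Y_\tau$ identifies $\bar{p}_1|_{\Gamma_\tau}$ with $p_1|_{r=1-\epsilon}$ and that $\partial_r \bar{p}_1 = \partial_r p_1 + O(\tau)$ along the inner boundary. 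Dividing by $\tau$ and letting $\tau \to 0$ yields \re{FreD}.

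For \re{FreDD} I would invoke the smooth $\mu$-dependence of the radially symmetric solution, guaranteed by Theorem \ref{thm21} together with \re{rho4D} which gives $\rho_4$ as a smooth function of $\mu$. Standard implicit function theorem / Schauder arguments on the linear systems \re{L1}--\re{bdyp1} then show that $p_1$ depends smoothly on $\mu$ through its coefficients. Since \re{FreD} is a composition of these smooth dependencies, differentiating in $\mu$ gives \re{FreDD} immediately.

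The main subtlety I anticipate is not in any single step but in verifying carefully that each apparently $O(\tau)$ contribution coming from the geometric deformation of the boundary (change of normal, evaluation point shift) actually lands in $O(\tau^2)$. This collapse relies entirely on the free boundary condition $p_*'(1-\epsilon)=0$ for the radial solution, which is what makes $\mathcal{F}(0,\mu)=0$ and renders the first-order computation clean; without it the formula would acquire extra geometric terms. The other technical point is ensuring that Lemma \ref{second} is strong enough to give $C^1$-pointwise traces of $\nabla(p - p_* - \tau\bar{p}_1)$ on $\Gamma_\tau$ of size $O(\tau^2/\epsilon)$ — but with $\epsilon$ fixed in this section, this is exactly what \re{lem33-4} delivers.
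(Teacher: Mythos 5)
Your proposal is correct and follows essentially the same route as the paper: both expand $p=p_*+\tau\bar p_1+O(\tau^2)$ using the second-order estimate \re{lem33-4}, exploit $\partial_r p_*(1-\epsilon)=0$ to kill the geometric corrections from the normal and the evaluation-point shift at order $\tau$, and read off the coefficient of $\tau$. Your treatment is merely a bit more explicit about why the $O(\tau)$ correction to $\mathbf n$ and the $Y_\tau$-induced change to $\bar p_1$ land in $O(\tau^2)$, which the paper leaves implicit in its one-line Taylor step.
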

\begin{proof}
Since 
\begin{equation*}
    \frac{\p p_*}{\p r}\Big|_{r=1-\epsilon}=0,
\end{equation*}
which implies $\mathcal{F}(0,\mu)=0$. For $\tilde{R}=\tau S$, it then follows from \re{lem33-4} that
\begin{equation*}
    \begin{split}
        \mathcal{F}(\tau S,\mu)= -\frac{\p p}{\p {\bm n}}\Big|_{\Gamma_\tau}&=  \frac{\p (p_*+\tau p_1)}{\p r}\Big|_{r=1-\epsilon+\tau S} + O(|\tau|^2 \|S\|_{C^{4+\alpha}(\Sigma)})\\
        &=\tau\Big[\frac{\p^2 p_*}{\p r^2}\Big|_{r=1-\epsilon}S(\theta) + \frac{\p p_1}{\p r}\Big|_{r=1-\epsilon}\Big] + O(|\tau|^2 \|S\|_{C^{4+\alpha}(\Sigma)}),
    \end{split}
\end{equation*}
which leads to the expression of the Fr\'echet derivative in \re{FreD}, and \re{FreDD} is a direct consequence of \re{FreD}.
\end{proof}

\section{Bifurcations - Proof of Theorem \ref{result}}
In this section, we shall employ the explicit expression of the Fr\'echet derivative \re{FreD} to verify the four conditions in the Crandall-Rabinowitz theorem and complete the proof of Theorem \ref{result}. Unlike \cite{CE3, FFBessel, FR2, FH3, angio1, Fengjie, Hongjing, Zejia, W, WZ2}, we cannot solve $p_*$ and $p_1$ explicitly, since our model is highly nonlinear and coupled. To meet the challenges, we need to derive various sharp estimates on $p_*$ and $p_1$. 

Throughout the rest of this paper, $C$ is used to represent a generic constant independent of $\epsilon$, which might change from line to line.

\subsection{Estimates for $p_*$}
In order to estimate $\frac{\p^2 p_*(1-\epsilon)}{\p r^2}$ in \re{FreD}, we start with evaluating \re{r4} at $r=1-\epsilon$ and substituting the boundary condition $\re{r7a}$, hence we obtain
\begin{equation}\label{psproof}
    -\frac{\p^2 p_*(1-\epsilon)}{\p r^2} = \frac{1}{M_0}\Big(\lambda\frac{(M_0-F_*)L_*}{\gamma+H_*}-\rho_3(M_0-F_*) - \rho_4 F_*\Big)\Big|_{r=1-\epsilon}.
\end{equation}
Similar to the proof of Theorem \ref{thm21}, we substitute \re{Ls} -- \re{Fs} into the above formula and combine with \re{rho4}, we find that both $O(1)$ and $O(\epsilon)$ terms cancel out, thus
\begin{equation}\label{psprop}
    \frac{\p^2 p_*(1-\epsilon)}{\p r^2} = \frac{\epsilon}{M_0}\Big(\frac{M_0}{\gamma+H_0}(\mu-\mu_c)-\rho_4 F_*^1\Big) + O(\epsilon^2) = O(\epsilon^2).
\end{equation}
Denote
\begin{equation}
    \label{J1}
    J_1(\mu,\rho_4) = \frac{1}{\epsilon^2}\frac{\p^2 p_*(1-\epsilon)}{\p r^2},\quad \text{ i.e., } \quad \frac{\p^2 p_*(1-\epsilon)}{\p r^2}=\epsilon^2 J_1(\mu,\rho_4), 
\end{equation}
it follows from \re{psprop} that  $J_1(\mu,\rho_4) = O(1)$ is bounded. Besides, we claim that $\frac{\dif J_1}{\dif \mu} = \frac{\p J_1}{\p \mu}+\frac{\p J_1}{\p \rho_4}\frac{\p \rho_4}{\p \mu}= O(1)$ is also bounded. To prove it, we take $\mu$ derivative of equation \re{psprop}, and derive
\begin{equation*}
        \frac{\p^2 }{\p r^2}\Big(\frac{\p p_*}{\p \mu}\Big)\Big|_{r=1-\epsilon} =\epsilon \Big(\frac{1}{\gamma+H_0} - \frac{F_*^1}{M_0}\frac{\p \rho_4}{\p \mu}\Big) + O(\epsilon^2).
\end{equation*}
By substituting the formula of $\frac{\p \rho_4}{\p \mu}$ in \re{rho4D}, we find that the $O(\epsilon)$ terms in the above equation cancel out, hence
\begin{equation*}
        \frac{\dif J_1(\mu,\rho_4(\mu))}{\dif \mu} = \frac{1}{\epsilon^2} \frac{\p^2 }{\p r^2}\Big(\frac{\p p_*}{\p \mu}\Big)\Big|_{r=1-\epsilon} = \frac{1}{\epsilon^2} O(\epsilon^2) = O(1).
\end{equation*}

To sum up, the properties of $J_1$ are listed in the following lemma:
\begin{lem}\label{J1prop} 
For function $J_1(\mu,\rho_4)$ defined in \re{J1}, there exists a constant $C$ which is independent of $\epsilon$ such that
\begin{equation}
    \label{J1bounds}
    |J_1(\mu,\rho_4(\mu))| \le C,\hspace{2em} \Big|\frac{\dif J_1(\mu,\rho_4(\mu))}{\dif \mu}\Big| \le C.
\end{equation}
\end{lem}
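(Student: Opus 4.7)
The plan is to extract both bounds from a direct computation that exploits the cancellation structure already visible in the definitions of $\mu_c$ and of $\rho_4(\mu)$. Starting from the radial equation \re{r4} and evaluating at $r=1-\epsilon$, the left-hand side reduces to $-\frac{\p^2 p_*}{\p r^2}(1-\epsilon)$ after using the boundary condition \re{r7a} together with the polar form of the Laplacian, giving the formula \re{psproof}. So the question becomes: how large is the right-hand side of \re{psproof} as $\epsilon \to 0$?

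First I would substitute the expansions \re{Ls}--\re{Fs} for $L_*$, $H_*$, $F_*$ (which the paper has already established) into the bracket in \re{psproof}. The $O(1)$ leading-order part reduces to $\lambda L_0 - \rho_3(\gamma+H_0)$ evaluated through the expansions, and by definition of $\mu$ this is $\epsilon\mu$ up to $O(\epsilon^2)$. Grouping the $O(\epsilon)$ contributions, the first-order corrections $L_*^1$ and $H_*^1$ combine via the identity \re{mum} to produce the factor $(\mu - \mu_c)M_0/(\gamma+H_0)$, while the $F_*$ expansion contributes $-\rho_4 F_*^1 \epsilon$. The defining relation \re{rho4} for $\rho_4$ is precisely the condition that these $O(\epsilon)$ pieces cancel (this is the same cancellation that made $\Phi=0$ in the proof of Theorem \ref{thm21}), so only $O(\epsilon^2)$ terms survive. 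Dividing by $\epsilon^2$ yields the first bound $|J_1(\mu,\rho_4(\mu))|\le C$.

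For the second bound, I would differentiate \re{psproof} in $\mu$, treating $\rho_4$ as a function of $\mu$ via the chain rule. The only places $\mu$ appears explicitly are through $L_0$ (hence $L_*$) and through $\rho_4$; the $\mu$-derivatives of $H_*$ and $F_*$ enter at lower order. Collecting the $O(\epsilon)$ contributions gives a term $\epsilon/(\gamma+H_0)$ coming from $\partial L_*/\partial\mu$ and a term $-\epsilon (F_*^1/M_0)\,\partial\rho_4/\partial\mu$ coming from the $\rho_4$ dependence. The formula \re{rho4D} is exactly the identity that makes these two $O(\epsilon)$ contributions cancel, leaving $O(\epsilon^2)$, so after dividing by $\epsilon^2$ we again get a uniform bound.

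The main obstacle, then, is not conceptual but organizational: verifying that the cancellations at order $O(1)$ and $O(\epsilon)$ go through cleanly requires tracking the expansions \re{Ls}--\re{Fs} to one order beyond what Theorem \ref{thm21} uses, and making sure the remainders are genuinely $O(\epsilon^2)$ uniformly in $\mu$ on the admissible interval $\mu_c<\mu<\mu^*$. As long as one invokes the $C^\infty$ regularity of the extended radial solution (noted in the remark after Theorem \ref{thm21}) to differentiate the expansions termwise in $\mu$, the two bounds in \re{J1bounds} follow with a constant $C$ that depends only on the fixed parameters and on $\mu^*$, independent of $\epsilon$.
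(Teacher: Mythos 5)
Your proposal is correct and follows essentially the same route as the paper: evaluate \re{r4} at $r=1-\epsilon$ using \re{r7a}, substitute the expansions \re{Ls}--\re{Fs}, observe that the $O(1)$ and $O(\epsilon)$ terms cancel via \re{mum} and \re{rho4} to give $\frac{\p^2 p_*(1-\epsilon)}{\p r^2}=O(\epsilon^2)$, then differentiate in $\mu$ and use \re{rho4D} to see the analogous cancellation of the $O(\epsilon)$ term in $\frac{\p}{\p\mu}\frac{\p^2 p_*(1-\epsilon)}{\p r^2}$. (One small imprecision in your phrasing: the $O(1)$ part of the bracket vanishes because $\lambda L_*(1-\epsilon)\approx\rho_3(\gamma+H_*(1-\epsilon))$ at leading order, not because $\lambda L_0-\rho_3(\gamma+H_0)=\epsilon\mu$; the latter identity is exact by definition of $\mu$ and enters only through the $O(\epsilon)$ coefficient $L_*^1$.)
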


\subsection{Estimates for $p_1$} 
Set the perturbation 
$$S(\theta)=\cos(n\theta),$$ 
as the set $\{\cos(n\theta)\}_{n=1}^\infty$ is clearly a basis for the Banach space $X^{l+\alpha}_1$ defined in \re{Banach}. Since the solution to \re{L1} -- \re{bdyp1} $(L_1,H_1,F_1,p_1)$ is unique, we know 
if we can find a solution $(L_1, H_1, F_1, p_1)$ of the form
\begin{eqnarray}
    &L_1=L_1^n \cos(n\theta), \hspace{2em} &H_1=H_1^n\cos(n\theta),\label{nt1}\\
    &F_1=F_1^n \cos(n\theta), \hspace{2em} &p_1=p_1^n\cos(n\theta),\label{nt2}
\end{eqnarray}
then it is the unique solution of \re{L1} -- \re{bdyp1}. Substituting \re{nt1} and \re{nt2} into \re{L1} -- \re{bdyp1}, we need to find $(L_1^n, H_1^n, F_1^n, p_1^n)$ satisfying
\begin{eqnarray}
    &-\frac{\p^2 L_1^n}{\p r^2}-\frac{1}{r}\frac{\p L_1^n}{\p r} + \frac{n^2}{r^2}L_1^n= f_5(L_1^n,H_1^n,F_1^n) \hspace{2em} &\text{in }\Omega_*,\label{L1n1}\\
    &-\frac{\p^2 H_1^n}{\p r^2}-\frac{1}{r}\frac{\p H_1^n}{\p r} + \frac{n^2}{r^2}H_1^n= f_6(L_1^n,H_1^n,F_1^n) \hspace{2em} &\text{in }\Omega_*,\label{H1n1}\\
    &-D\frac{\p^2 F_1^n}{\p r^2}-\frac{D}{r}\frac{\p F_1^n}{\p r} + \frac{D n^2}{r^2}F_1^n - \frac{\p F_1^n}{\p r} \frac{\p p_*}{\p r}= f_7(L_1^n,H_1^n, F_1^n)+\frac{\p F_*}{\p r}\frac{\p p_1^n}{\p r}
   \hspace{2em} &\text{in }\Omega_*,\label{F1n1}\\
    &-\frac{\p^2 p_1^n}{\p r^2}-\frac{1}{r}\frac{\p p_1^n}{\p r} + \frac{n^2}{r^2}p_1^n=f_8(L_1^n,H_1^n,F_1^n) \hspace{2em} &\text{in }\Omega_*,\label{p1n1}\\
    &\frac{\p L_1^n}{\p r}=\frac{\p H_1^n}{\p r}=\frac{\p F_1^n}{\p r}=\frac{\p p_1^n}{\p r}=0 & r=1,\label{bdy1n1}\\
    &-\frac{\p L_1^n}{\p r}+\beta_1 L_1^n=\Big(\frac{\p^2 L_*}{\p r^2}-\beta_1\frac{\p L_*}{\p r}\Big)\Big|_{r=1-\epsilon} &r=1-\epsilon,\label{bdyL1n1}\\
    &-\frac{\p H_1^n}{\p r}+\beta_1 H_1^n=\Big(\frac{\p^2 H_*}{\p r^2}-\beta_1\frac{\p H_*}{\p r}\Big)\Big|_{r=1-\epsilon} &r=1-\epsilon,\label{bdyH1n1}\\
    &-\frac{\p F_1^n}{\p r}+\beta_2 F_1^n=\Big(\frac{\p^2 F_*}{\p r^2}-\beta_2\frac{\p F_*}{\p r}\Big)\Big|_{r=1-\epsilon} &r=1-\epsilon,\label{bdyF1n1}\\
    &p_1^n = \frac{1-n^2}{(1-\epsilon)^2} &r=1-\epsilon,\label{bdyp1n1}
\end{eqnarray}
where by \re{L1} -- \re{eqnp1}, $f_5$, $f_6$, $f_7$, and $f_8$ can all be bounded by linear functions of $|L_1^n|$, $|H_1^n|$, and $|F_1^n|$. In particular, $f_8$ is expressed as
\begin{equation}
    \label{f9}
    f_8 = \frac{1}{M_0}\Big[\lambda\frac{(M_0-F_*)L^n_1}{\gamma+H_*} -\lambda\frac{L_*F^n_1}{\gamma+H_*} -\lambda\frac{(M_0-F_*)L_*H^n_1}{(\gamma+H_*)^2} + (\rho_3-\rho_4)F_1^n\Big],
\end{equation}
which will be used later.

Denote the operator $\mathscript L \triangleq \frac{\p^2}{\p r^2} + \frac{1}{r}\frac{\p }{\p r} + \frac{n^2}{r^2}$. For this special operator, one can easily verify the following lemmas. 
\begin{lem}\label{eqn-n}
  The general solution of ($\eta$ is a constant)
\begin{eqnarray}
 && {\mathscript L}[\psi]\triangleq- \psi''- \frac1r \psi' + \frac{n^2}{r^2} \psi  = \eta + f(r), \mm 1-\epsilon < r < 1 , \label{psi0}\\
 && \psi'(1) = 0 \label{psi1}
\end{eqnarray}
is given by 
\bea
 && \psi - \psi_1  =  \left\{ \begin{array}{ll} \dis
 A r^n + B r^{-n} + K[f](r), \quad\text{where } B =  A+\frac1n K[f]'(1)   \mm & n\neq 0, \\
  A + K[f](r) & n= 0,
\end{array}\right. 
\eea
where
\bea \label{psi}\psi_1'(1) = 0, \mm
&& \psi_1 =  \left\{ \begin{array}{ll} \dis \frac{\eta}{n^2-4}\Big(r^2-\frac2nr^n\Big)
 & n\neq 0, 2,\\
 \dis\eta\Big( \frac{1-r^2}4 + \frac12\log r\Big) \hspace{1em}& n = 0, \\
 \dis \eta\Big( \frac{r^2}8 - \frac{r^2}4\log r\Big) & n= 2, 
 \end{array}\right.  
 \eea 
 and
 \bea \label{Kf} 
&&  K[f](r)  =  
 \left\{ \begin{array}{ll}
 \dis \frac{r^n}{2n} \int_r^1  s^{-n+1} f(s) \, \dif s+
  \frac{r^{-n}}{2n} \int_{1-\epsilon}^r  s^{n+1} f(s) \, \dif s \hspace{1em}& n\neq 0,\\
 \dis\rule{0pt}{18pt}- \int_{r}^1 \Big(\log\frac{s}r\Big)\; s f(s) \, \dif s & n= 0.
\end{array}\right.
 \eea
The special solution $K[f]$ satisfies
\be 
 \label{estK} | K[f](r) | \le  \min\Big(\frac{\epsilon}{2n}, \frac1{n^2}\Big) \|f\|_{L^\infty}, \mm
  | K[f]'(r) | \le  \min\Big(\frac\epsilon2, \frac1n\Big) \|f\|_{L^\infty} , \mm n\ge 1,
\ee
and
\be 
 | K[f](r) | \le  \epsilon \|f\|_{L^\infty}, \mm
  | K[f]'(r) | \le  \epsilon \|f\|_{L^\infty} , \mm n = 0.
\ee
\end{lem}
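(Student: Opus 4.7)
My plan is to proceed as a direct ODE computation, since $\mathscript{L}$ is essentially an Euler operator perturbed by the negative $n^2/r^2$ potential. The strategy has three clean pieces: identify the homogeneous solutions, construct $\psi_1$ by an explicit ansatz and $K[f]$ by variation of parameters, and then bound $K[f]$ and $K[f]'$ by careful integral estimates that exploit the geometric decay of $(r/s)^n$ when $|\log(r/s)| \gtrsim 1/n$.

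For the structure of the general solution, I would first observe that $\mathscript{L}[\psi]=0$ is an Euler equation $r^2\psi''+r\psi'-n^2\psi=0$, with linearly independent solutions $r^n, r^{-n}$ when $n\neq 0$ and $1,\log r$ when $n=0$. For the particular solution $\psi_1$ of $\mathscript{L}[\psi_1]=\eta$ satisfying $\psi_1'(1)=0$, I would plug in $\psi_1=\alpha r^2$ and compute $\mathscript{L}[r^2]=n^2-4$, which works for $n\neq 0,2$; adding the correction $-\tfrac{2\alpha}{n}r^n$ enforces $\psi_1'(1)=0$. For the resonant case $n=2$, the ansatz $\psi_1=\alpha r^2\log r + \beta r^2$ gives $\mathscript{L}[r^2\log r]=-4$, fixing both coefficients from $\mathscript{L}[\psi_1]=\eta$ and $\psi_1'(1)=0$. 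The case $n=0$ is handled by direct integration of $(r\psi')'=-\eta r$ followed by matching $\psi'(1)=0$ and choosing the integration constant so that the formula displayed in \eqref{psi} holds.

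Next, I would verify that $K[f]$ is a particular solution of $\mathscript{L}[\psi]=f$ via variation of parameters: the Wronskian of $r^n, r^{-n}$ is $W=-2n/r$, and the standard formula immediately produces the two integrals in \eqref{Kf}, with the integration limits at $r=1$ and $r=1-\epsilon$ chosen for convenience (any choice works since the difference lies in the homogeneous kernel, which gets absorbed into the constants $A,B$). The boundary condition $\psi'(1)=0$ combined with $\psi_1'(1)=0$ and $K[f](r)$'s explicit derivative then forces $An - Bn + K[f]'(1) = 0$, yielding $B=A+\tfrac1n K[f]'(1)$.

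The main work is in the uniform estimates \eqref{estK}. After rewriting
\[
K[f](r) = \frac{1}{2n}\int_r^1 (r/s)^n\, s\, f(s)\,\dif s + \frac{1}{2n}\int_{1-\epsilon}^r (s/r)^n\, s\, f(s)\,\dif s,
\]
the factor $\min(1, (r/s)^n)$ appears naturally in both integrands. The bound $\tfrac{\epsilon}{2n}\|f\|_{L^\infty}$ comes from the crude estimate $(r/s)^n \le 1$ on the respective intervals and summing the lengths $(1-r)+(r-(1-\epsilon))=\epsilon$. The sharper bound $\tfrac{1}{n^2}\|f\|_{L^\infty}$ is the key technical step: I would use $\log(s/r)\ge (s-r)/s$ for $s\ge r$ and $\log(r/s)\ge (r-s)/r$ for $s\le r$ together with $1/s,1/r\ge 1$ (since $r,s\le 1$) to obtain $(r/s)^n\le e^{-n(s-r)}$ and $(s/r)^n\le e^{-n(r-s)}$, whence each integral is bounded by $\int_0^\infty e^{-nu}\dif u=1/n$. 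The derivative estimates follow the same pattern after differentiating $K[f]$ and noting the algebraic factor of $r^{n-1}$ instead of $r^n/n$, which removes one power of $n$ from the estimate. The $n=0$ case is handled separately but by the same ideas, using $|\log(s/r)|\le \epsilon/(1-\epsilon)$ on the small annulus. The main obstacle I anticipate is keeping constants clean enough that the claimed bounds with their precise prefactors $1/(2n)$, $1/n$, $\epsilon/2$ actually hold; the exponential-decay argument yields $O(1/n^2)$ and $O(1/n)$ but some care is needed to avoid losing an extra factor when summing the two pieces of $K[f]$.
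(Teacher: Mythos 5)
Your plan is correct, and the parts that matter — the variation-of-parameters construction of $K[f]$, the Wronskian $-2n/r$, the derivation of $B = A + \tfrac1n K[f]'(1)$ from $\psi'(1)=0$, and the $\epsilon/(2n)$ bound from the crude estimate $(r/s)^n\le 1$ on an annulus of width $\epsilon$ — all check out. The paper's actual proof is much shorter than yours: it silently accepts the structural assertions (Euler solutions, the formula for $K[f]$, the ansatz for $\psi_1$) and proves only the integral estimates. Your extra verification of the ODE structure is sound and appropriate in a blind proof.

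Where you genuinely diverge from the paper is in the derivation of the $1/n^2$ bound. You exponentiate the Euler weight, using $\log(s/r)\ge 1-r/s = (s-r)/s \ge s-r$ (for $r\le s\le 1$) to get $(r/s)^n \le e^{-n(s-r)}$, then bound by the full exponential integral $\int_0^\infty e^{-nu}\,\dif u = 1/n$. The paper instead stays purely algebraic: it uses $s\le 1$ to replace the weight $s$ in the integrand by $s^{-1}$, so that each integrand becomes a pure power $r^n s^{-n-1}$ (resp.\ $r^{-n}s^{n-1}$) whose antiderivative telescopes, giving $r^n\cdot r^{-n}/n + r^{-n}\cdot r^n/n = 2/n$ directly. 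Your exponential-decay argument has the advantage of being reusable for general smooth weights and of making the mechanism (decay on a $1/n$-length scale) visible; the paper's algebraic trick has the advantage of producing an exact, clean antiderivative with no approximation step. Both arrive at the same constants for $|K[f]|$. For $|K[f]'|$, both proofs wave at the estimate ("similarly" / "follows the same pattern"), and in fact both routes yield something like $1/(n-1)$ or $1/(n(1-\epsilon))$ rather than a literal $1/n$; this constant slippage is present in the paper's own proof as well and is immaterial downstream, since every later use of the lemma absorbs it into a generic $C$. You already flag this as an obstacle, which is fair self-criticism, but it is not a defect unique to your approach.
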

\begin{proof} Using the expression in \re{Kf}, we clearly have, for $1-\epsilon\le r\le 1$ and $n\ge 1$,
\begin{eqnarray} \label{proofK}
 | K[f](r) | \; \le \; \|f\|_{L^\infty} \Big[
   \frac{1}{2n} \int_r^1  \Big(\frac{r}{s}\Big)^{n} s \, \dif s +
  \frac{1}{2n} \int_{1-\epsilon}^r   \Big(\frac{s}{r}\Big)^{n} s \,  \dif s \Big]
  \; \le \; \frac{\epsilon}{2n} \|f\|_{L^\infty} ,
\end{eqnarray}
We can also integrate the expression to obtain
\[
\int_r^1  \Big(\frac{r}{s}\Big)^{n} s \, \dif s +
    \int_{1-\epsilon}^r   \Big(\frac{s}{r}\Big)^{n} s \,  \dif s  
 \le \int_r^1  r^n s^{-n-1} \, \dif s +
    \int_{1-\epsilon}^r   r^{-n} s^{n-1} \,  \dif s 
    \le r^n \frac{r^{-n}}n + r^{-n}\frac{r^{n}}n = \frac2n;
\]
combining it with \re{proofK}, we deduce
\[
 | K[f](r) | \le  \min\Big(\frac{\epsilon}{2n}, \frac1{n^2}\Big) \|f\|_{L^\infty}.
\]
Furthermore, it follows from \re{Kf} that
\[
  K[f]'(r) = \frac{r^{n-1}}{2} \int_r^1  s^{-n+1} f(s) \, \dif s -
  \frac{r^{-n-1}}{2} \int_{1-\epsilon}^r  s^{n+1} f(s) \, \dif s;
\]
similarly, we shall obtain
  \beaa
  | K[f]'(r) | & \le & \|f\|_{L^\infty} \Big[
   \frac{1}{2} \int_r^1  \Big(\frac{r}{s}\Big)^{n-1} \, \dif s +
  \frac{1}{2} \int_{1-\epsilon}^r   \Big(\frac{s}{r}\Big)^{n+1} \, \dif s \Big]
  \; \le \;\min\Big(\frac\epsilon2, \frac1n\Big) \|f\|_{L^\infty}  .
\eeaa
The case $n=0$ is similar.
\end{proof}

\begin{lem}\label{eqn-n1}                                                                 If in addition to \re{psi0} and \re{psi1} we further assume $\psi(1-\epsilon) = G$,
  then, for $n\ge 1$,
 \bea 
 &&\label{A} A  =  \frac1{1+(1-\epsilon)^{2n}} \Big( (1-\epsilon)^n [G-\psi_1(1-\epsilon)]
  - (1-\epsilon)^nK[f](1-\epsilon) - \frac1n K[f]'(1)\Big), \\
&&\label{B} B  =  \frac1{1+(1-\epsilon)^{2n}} \Big( (1-\epsilon)^n[G-\psi_1(1-\epsilon)]
  - (1-\epsilon)^nK[f](1-\epsilon) +\frac{(1-\epsilon)^{2n}}n K[f]'(1)\Big),
 \eea 
 and for $n=0$,
 \be 
  A = G - \psi_1(1-\epsilon) - K[f](1-\epsilon).
 \ee
\end{lem}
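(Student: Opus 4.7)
The plan is straightforward: this lemma simply determines the constants $A$ and $B$ in the general solution obtained in Lemma \ref{eqn-n} by imposing the extra Dirichlet condition $\psi(1-\epsilon) = G$. From Lemma \ref{eqn-n}, for $n\ge 1$ we already know that the general solution satisfying $\psi'(1)=0$ has the form
\[
 \psi(r) = \psi_1(r) + A r^n + B r^{-n} + K[f](r),
\]
together with the relation
\[
  B = A + \frac{1}{n} K[f]'(1)
\]
enforced by the Neumann condition at $r=1$ (since $\psi_1'(1)=0$ by \re{psi} and a direct differentiation of the boundary condition $\psi'(1)=0$ picks out this linear constraint on $A$ and $B$). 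So only one scalar equation, namely $\psi(1-\epsilon) = G$, remains to determine $A$.

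Next I would evaluate the expression for $\psi$ at $r=1-\epsilon$ and substitute the relation $B = A + \frac{1}{n} K[f]'(1)$:
\[
 G = \psi_1(1-\epsilon) + A(1-\epsilon)^n + \Bigl( A + \frac{1}{n} K[f]'(1) \Bigr)(1-\epsilon)^{-n} + K[f](1-\epsilon).
\]
Grouping the $A$ terms yields $A[(1-\epsilon)^n + (1-\epsilon)^{-n}]$ on one side. Multiplying through by $(1-\epsilon)^n$ to clear the negative power produces the factor $1+(1-\epsilon)^{2n}$, and rearranging gives the formula \re{A} for $A$. The formula \re{B} for $B$ then follows immediately by adding $\frac{1}{n}K[f]'(1)$ to \re{A} and combining over the common denominator $1+(1-\epsilon)^{2n}$, using $\frac{1}{n} K[f]'(1) = \frac{(1+(1-\epsilon)^{2n})}{n(1+(1-\epsilon)^{2n})} K[f]'(1)$, which produces the numerator term $\frac{(1-\epsilon)^{2n}}{n} K[f]'(1)$ after cancellation with the $-\frac{1}{n} K[f]'(1)$ inherited from $A$.

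For the degenerate case $n=0$, the general solution from Lemma \ref{eqn-n} collapses to $\psi = \psi_1 + A + K[f](r)$ with only a single constant $A$ (since $r^0 = r^{-0}=1$); evaluating at $r = 1-\epsilon$ and solving for $A$ gives the stated formula directly. I do not foresee any genuine obstacle here --- the only care needed is in tracking signs while substituting the constraint $B = A + \frac{1}{n} K[f]'(1)$ and in remembering that $\psi_1$ has been chosen in \re{psi} to satisfy $\psi_1'(1)=0$ so that this constraint indeed decouples the two unknowns as described.
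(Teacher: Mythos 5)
Your proof is correct and follows the only natural route: you take the general solution from Lemma~\ref{eqn-n} together with the constraint $B = A + \frac1n K[f]'(1)$, impose the Dirichlet condition $\psi(1-\epsilon)=G$, and solve the resulting single linear equation for $A$, after which $B$ follows by back-substitution. The paper gives no explicit proof of this lemma (it is stated with the remark that one can easily verify it), and the algebra you lay out is exactly the verification it has in mind.
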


\begin{lem}\label{nepsilon} For $n\ge 0$ and $0<\epsilon<1$,
  \bea 
   && 1-n\epsilon \le (1-\epsilon)^n \le 1- n\epsilon +\frac12 n^2 \epsilon^2. %\\
   %&& 1+ n\epsilon \le (1-\epsilon)^{-n} %\le 1+ n\epsilon + 2 n^2 \epsilon^2,
  \eea 
\end{lem}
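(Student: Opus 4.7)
Both inequalities can be established by induction on the non-negative integer $n$; no outside machinery is needed.

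For the lower bound, this is just Bernoulli's inequality. After checking the trivial base case $n=0$, I would multiply the inductive hypothesis $(1-\epsilon)^n \ge 1-n\epsilon$ by the non-negative factor $1-\epsilon$ to obtain
$$(1-\epsilon)^{n+1} \;\ge\; (1-\epsilon)(1-n\epsilon) \;=\; 1-(n+1)\epsilon + n\epsilon^2 \;\ge\; 1-(n+1)\epsilon,$$
which closes the induction.

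For the upper bound I would induct analogously, with a trivial base case at $n=0$. Multiplying the inductive hypothesis $(1-\epsilon)^n \le 1 - n\epsilon + \tfrac{1}{2} n^2\epsilon^2$ by the non-negative factor $1-\epsilon$ produces
$$(1-\epsilon)^{n+1} \;\le\; 1 - (n+1)\epsilon + \Big(\tfrac{1}{2} n^2 + n\Big)\epsilon^2 - \tfrac{1}{2} n^2 \epsilon^3.$$
Since the cubic term is non-positive and $\tfrac{1}{2} n^2 + n \le \tfrac{1}{2}(n+1)^2$ (with slack exactly $\tfrac{1}{2}$), the right-hand side is bounded above by $1 - (n+1)\epsilon + \tfrac{1}{2}(n+1)^2\epsilon^2$, which is the inductive claim at level $n+1$.

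This is a routine bookkeeping lemma, and I foresee no real obstacle. An alternative would be to combine the elementary estimate $(1-\epsilon)^n \le e^{-n\epsilon}$ with the convexity inequality $e^{-t} \le 1 - t + \tfrac{1}{2} t^2$ for $t\ge 0$ (easily verified by checking that $g(t) = 1-t+\tfrac{1}{2}t^2 - e^{-t}$ satisfies $g(0)=g'(0)=0$ and $g''(t)\ge 0$), but the direct induction above is the most efficient route and avoids invoking the exponential.
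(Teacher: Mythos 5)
Your proof is correct, and it takes a genuinely different route from the paper. The paper proves both bounds by one-variable calculus in $\epsilon$: for the lower bound it sets $f(\epsilon)=(1-\epsilon)^n-1+n\epsilon$, checks $f(0)=0$ and $f'(\epsilon)=n\bigl(1-(1-\epsilon)^{n-1}\bigr)\ge 0$; for the upper bound it sets $f(\epsilon)=(1-\epsilon)^n-1+n\epsilon-\tfrac12 n^2\epsilon^2$, checks $f(0)=f'(0)=0$ and $f''(\epsilon)=n(n-1)(1-\epsilon)^{n-2}-n^2\le 0$. You instead induct on the integer $n$, multiplying the inductive hypothesis by the nonnegative factor $1-\epsilon$; the key bookkeeping step, $\tfrac12 n^2+n\le\tfrac12(n+1)^2$, is exactly right and your cubic term $-\tfrac12 n^2\epsilon^3$ has the correct sign to be dropped. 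Your induction is more elementary (no derivatives), while the paper's derivative test is shorter to state and would extend verbatim to real exponents $n\ge 1$; since $n$ in the paper is always a mode index (a nonnegative integer), both approaches are equally adequate here. Your alternative via $(1-\epsilon)^n\le e^{-n\epsilon}\le 1-n\epsilon+\tfrac12 n^2\epsilon^2$ is also valid and is essentially a two-step version of the paper's convexity argument.
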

\begin{proof} The function
$f(\epsilon) \triangleq (1-\epsilon)^n - 1 + n\epsilon$ satisfies 
$f(0) = 0$ and $f'(\epsilon) = -n (1-\epsilon)^{n-1} + n \ge 0$
for $0<\epsilon <1$, so that $f(\epsilon)\ge 0$ for $0<\epsilon <1$.

Similarly, the function $f(\epsilon) \triangleq (1-\epsilon)^n - 1 + n\epsilon -\frac12 n^2\epsilon^2 $ satisfies 
$f(0) = f'(0) = 0$ and $f''(\epsilon) = n(n-1) (1-\epsilon)^{n-2} - n^2 \le 0$
for $0<\epsilon <1$, so that $f(\epsilon)\le 0$ for $0<\epsilon <1$.
\end{proof}

In order to make the boundary conditions \re{bdyL1n} -- \re{bdyF1n} homogeneous, let's instead work with
\begin{gather}
        \tilde{L_1^n}(r) = L_1^n(r) - \frac{1}{\beta_1}\Big(\frac{\p^2 L_*}{\p r^2}-\beta_1\frac{\p L_*}{\p r}\Big)\Big|_{r=1-\epsilon},\label{defL}\\
        \tilde{H_1^n}(r) = H_1^n(r) - \frac{1}{\beta_1}\Big(\frac{\p^2 H_*}{\p r^2}-\beta_1\frac{\p H_*}{\p r}\Big)\Big|_{r=1-\epsilon},\label{defH}\\
        \tilde{F_1^n}(r) = F_1^n(r) - \frac{1}{\beta_2}\Big(\frac{\p^2 F_*}{\p r^2}-\beta_2\frac{\p F_*}{\p r}\Big)\Big|_{r=1-\epsilon}.\label{defF}%\\
  % \tilde{p_1^n}(r) = p_1^n(r) - \frac{1-n^2}{(1-\epsilon)^2}.\label{defp}
\end{gather}
Accordingly, $\tilde{L_1^n}(r)$, $\tilde{H_1^n}(r)$, $\tilde{F_1^n}(r)$ satisfy the following equations:
\begin{eqnarray}
    &-\frac{\p^2 \tilde{L_1^n}}{\p r^2}-\frac{1}{r}\frac{\p \tilde{L_1^n}}{\p r} + \frac{n^2}{r^2}\tilde{L_1^n}= \t f_5 \triangleq f_5 -\frac{n^2}{\beta_1 r^2}\Big(\frac{\p^2 L_*}{\p r^2}-\beta_1\frac{\p L_*}{\p r}\Big)\Big|_{r=1-\epsilon} \hspace{1em} &\text{in }\Omega_*,\label{L1n}\\
    &-\frac{\p^2 \tilde{H_1^n}}{\p r^2}-\frac{1}{r}\frac{\p \tilde{H_1^n}}{\p r} + \frac{n^2}{r^2}\tilde{H_1^n}=\t f_6 \triangleq f_6-\frac{n^2}{\beta_1 r^2}\Big(\frac{\p^2 H_*}{\p r^2}-\beta_1\frac{\p H_*}{\p r}\Big)\Big|_{r=1-\epsilon} \hspace{1em} &\text{in }\Omega_*,\label{H1n}\\
    &\begin{array}{ll}
    -D\frac{\p^2 \tilde{F_1^n}}{\p r^2}-\frac{D}{r}\frac{\p \tilde{F_1^n}}{\p r} + \frac{D n^2}{r^2}\tilde{F_1^n}- \frac{\p \tilde{F_1^n}}{\p r} \frac{\p p_*}{\p r} = \t f_7 \triangleq f_7 +\frac{\p F_*}{\p r}\frac{\p p_1^n}{\p r}\\
    \hspace{20.5em}- \frac{D n^2}{\beta_2 r^2}\Big(\frac{\p^2 F_*}{\p r^2}-\beta_2\frac{\p F_*}{\p r}\Big)\Big|_{r=1-\epsilon}
    \end{array} \hspace{1em} &\text{in }\Omega_*,\label{F1n}\\
  %  &-\frac{\p^2 \tilde{p_1^n}}{\p r^2}-\frac{1}{r}\frac{\p \tilde{p_1^n}}{\p r} + \frac{n^2}{r^2}\tilde{p_1^n}=f_8-\frac{n^2(1-n^2)}{r^2(1-\epsilon)^2} \hspace{1em} &\text{in }\Omega_*,\label{p1n}\\
    &\frac{\p \tilde{L_1^n}}{\p r}=\frac{\p \tilde{H_1^n}}{\p r}=\frac{\p \tilde{F_1^n}}{\p r}=0 & r=1,\label{bdy1n}\\
    &-\frac{\p \tilde{L_1^n}}{\p r}+\beta_1 \tilde{L_1^n}=0 &r=1-\epsilon,\label{bdyL1n}\\
    &-\frac{\p \tilde{H_1^n}}{\p r}+\beta_1 \tilde{H_1^n}=0 &r=1-\epsilon,\label{bdyH1n}\\
    &-\frac{\p \tilde{F_1^n}}{\p r}+\beta_2 \tilde{F_1^n}=0 &r=1-\epsilon,\label{bdyF1n}
%\\    &\tilde{p_1^n} =  0 &r=1-\epsilon.\label{bdyp1n}
\end{eqnarray}
where $p_1^n$ is defined by \re{p1n1} and \re{bdyp1n1}.
 
\begin{lem}\label{lem4.1a}
For sufficiently small $\epsilon$, there exist a constant $C$ which does not depend on $\epsilon$ and $n$ such that the following inequalities are valid for the above system,
  \begin{eqnarray}
      \label{est1a}&\|\tilde{L_1^n}\|_{L^\infty(1-\epsilon,1)} +\|\tilde{H_1^n}\|_{L^\infty(1-\epsilon,1)} +  \|\tilde{F_1^n}\|_{L^\infty(1-\epsilon,1)} \le C(n^2+1) \epsilon,\\
  & \label{est2a} \| ({p_1^n})'\|_{L^\infty(1-\epsilon,1)} \le 2 (n^3+1).
  \end{eqnarray}
\end{lem}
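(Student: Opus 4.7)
The plan is to combine the explicit representation formulas from Lemmas~\ref{eqn-n} and~\ref{eqn-n1} with supersolution arguments in the spirit of Lemma~\ref{first} to estimate the coupled linear system \re{L1n1}--\re{bdyp1n1}. Throughout let $M := \|\tilde{L_1^n}\|_{L^\infty(1-\epsilon,1)} + \|\tilde{H_1^n}\|_{L^\infty(1-\epsilon,1)} + \|\tilde{F_1^n}\|_{L^\infty(1-\epsilon,1)}$. I would first bound $(p_1^n)'$ using Lemma~\ref{eqn-n1}. The Dirichlet value $G = (1-n^2)/(1-\epsilon)^2$ is of order $n^2$, so taking $\eta = 0$ and absorbing $f_8$ entirely into $K[f_8]$, the prefactor $(1-\epsilon)^n/(1+(1-\epsilon)^{2n}) \le 1/2$ yields $|A|,|B| \le C(n^2 + \|f_8\|_{L^\infty}/n^2)$. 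Differentiating $p_1^n = Ar^n + Br^{-n} + K[f_8]$ and using $B = A + K[f_8]'(1)/n$ to rewrite $nAr^{n-1} - nBr^{-n-1}$ as $nA(r^{n-1}-r^{-n-1}) - K[f_8]'(1)r^{-n-1}$, together with Lemma~\ref{nepsilon} to control $r^{n-1}-r^{-n-1}$, yields $\|(p_1^n)'\|_{L^\infty} \le C(n^3 + \|f_8\|_{L^\infty})$. Since $f_8$ is linear in $L_1^n, H_1^n, F_1^n$ by \re{f9} and the shifts $L_1^n - \tilde{L_1^n}$ etc.\ in \re{defL}--\re{defF} are $O(1)$ constants, $\|f_8\|_{L^\infty} \le CM + C$, so \re{est2a} will follow once \re{est1a} is proved, for $\epsilon$ sufficiently small.

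For the tilde variables I would use two tools in parallel. The first is a supersolution of the form $\phi(r) = C\|\tilde f_i\|_{L^\infty}\,\epsilon \cos(M_2(1-r)/\sqrt\epsilon)$ modelled on \re{cos2}: the additional $(n^2/r^2)\phi$ term only increases $\mathscript{L}[\phi]$, the Robin condition $-\phi' + \beta_i\phi > 0$ at $r=1-\epsilon$ and the Neumann condition $\phi'(1) = 0$ are readily checked, and for $\tilde{F_1^n}$ the drift $-p_*'\tilde{F_1^n}{}'$ is a mild $O(\sqrt\epsilon)$ perturbation by \re{deri}. This yields the uniform estimate $\|\tilde{L_1^n}\|_{L^\infty},\|\tilde{H_1^n}\|_{L^\infty},\|\tilde{F_1^n}\|_{L^\infty} \le C\epsilon\|\tilde f_i\|_{L^\infty}$. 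The second tool, used in the regime $n\epsilon \gtrsim 1$, is an adaptation of Lemma~\ref{eqn-n1} to the Robin--Neumann BC: writing $\tilde{L_1^n} = Ar^n + Br^{-n} + K[\tilde f_5]$, the Neumann BC at $r=1$ gives $B = A + K[\tilde f_5]'(1)/n$, and the Robin BC at $r=1-\epsilon$ determines $A$ via a scalar equation whose coefficient is bounded below by $\beta_1(1+(1-\epsilon)^{2n})$. Invoking Lemma~\ref{eqn-n}'s bound $|K[f]| \le (1/n^2)\|f\|_{L^\infty}$ then gives the sharper estimate $\|\tilde{L_1^n}\|_{L^\infty} \le (C/n^2)\|\tilde f_5\|_{L^\infty}$, and likewise for $\tilde{H_1^n}$ and $\tilde{F_1^n}$.

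I then close the system. Using \re{Ls}--\re{Fs} and \re{deri}, each of $(\p_r^2 L_*)|_{r=1-\epsilon}$, $(\p_r^2 H_*)|_{r=1-\epsilon}$, $(\p_r^2 F_*)|_{r=1-\epsilon}$ is $O(1)$, so the boundary correction in each $\tilde f_i$ contributes a term of order $n^2$, and the cross-coupling in $f_5, f_6, f_7$ contributes $O(M)$; the extra term in $\tilde f_7$ is $F_*'(p_1^n)'/D$, bounded by $C\epsilon(n^3+CM)$ from Step~1. Hence $\|\tilde f_5\|_{L^\infty}, \|\tilde f_6\|_{L^\infty} \le C(n^2+M)$ and $\|\tilde f_7\|_{L^\infty} \le C(n^2+n^3\epsilon+M)$. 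In the regime $n\epsilon \le 1$ the supersolution bound gives $M \le C\epsilon(n^2 + n^3\epsilon + M)$, and absorbing $M$ for small $\epsilon$ together with $n^3\epsilon \le n^2$ yields $M \le C\epsilon(n^2+1)$; in the regime $n\epsilon \ge 1$ the representation bound gives $M \le (C/n^2)(n^2 + n^3\epsilon + M) = C(1+n\epsilon+M/n^2)$, and absorbing $M$ together with $1+n\epsilon \le 2n^2\epsilon$ yields $M \le 2Cn^2\epsilon \le C(n^2+1)\epsilon$. Both regimes confirm \re{est1a}, and substituting back into the Step~1 estimate delivers \re{est2a}.

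The main obstacle will be stitching the two bounds across the transition regime $n\epsilon \sim 1$, and in particular ensuring that the coupling contribution $\epsilon F_*'(p_1^n)' \lesssim n^3\epsilon$ in $\tilde f_7$ is dominated by $C(n^2+1)\epsilon$: for $n\epsilon \le 1$ this follows from $n^3\epsilon \le n^2$, while for $n\epsilon \ge 1$ one must rely on the representation formula's $1/n^2$ improvement. A secondary technical subtlety is verifying that the $2\times 2$ system for $(A,B)$ in the Robin analog of Lemma~\ref{eqn-n1} is well-conditioned uniformly in $n$, which follows from the coefficient of $A$ being bounded below by $\beta_i(1+(1-\epsilon)^{2n})$, so the resulting bounds on $A,B$ genuinely inherit the $K$-operator smallness.
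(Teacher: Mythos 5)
Your overall structure — bootstrap the coupled system (the paper does this via the continuation lemma in Appendix~\ref{appcon}), use the $K$-operator bounds from Lemma~\ref{eqn-n} and the representation of Lemma~\ref{eqn-n1}, and feed a $(p_1^n)'$ estimate into the $\tilde f_7$ bound — is the same as the paper's. But your Step~1 derivation of \re{est2a} has a genuine gap in the regime where $n\epsilon$ is not small, which is exactly the regime needed in Case~(ii) of the main proof.

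The issue is that you first bound $|A|,|B|\le C(n^2+\|f_8\|_{L^\infty}/n^2)$ by using only $(1-\epsilon)^n/(1+(1-\epsilon)^{2n})\le 1/2$, and then you multiply by $n|r^{n-1}-r^{-n-1}|$, appealing to Lemma~\ref{nepsilon}. On $[1-\epsilon,1]$ the quantity $r^{-n-1}$ is as large as $(1-\epsilon)^{-n-1}\sim e^{(n+1)\epsilon}$, and Lemma~\ref{nepsilon} gives no control over this — it bounds $(1-\epsilon)^n$ from above and below by polynomials in $n\epsilon$, which is useful only when $n\epsilon$ is small and vacuous for $n\epsilon\ge 1$. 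Once you replace $A$ by its absolute value $C(n^2+\dots)$, you have discarded the $(1-\epsilon)^n$ sitting in the numerator of $A$ (see \re{A}), and that factor is precisely what cancels the exponential growth of $r^{-n-1}$: $(1-\epsilon)^n r^{-n-1}\le (1-\epsilon)^{-1}$ on $[1-\epsilon,1]$. The paper's estimate for $(p_1^n)'$ keeps this grouping,
\[
\Big|\frac{n(r^{n-1}-r^{-n-1})}{(1-\epsilon)^n+(1-\epsilon)^{-n}}\Big[G-K[f_8](1-\epsilon)\Big]\Big|
+\Big|\frac{r^{n-1}+(1-\epsilon)^{2n}r^{-n-1}}{1+(1-\epsilon)^{2n}}K[f_8]'(1)\Big|+\big|K[f_8]'(r)\big|,
\]
where both fractions are bounded uniformly in $n$ and $r\in[1-\epsilon,1]$. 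Your algebraic rewrite $nA(r^{n-1}-r^{-n-1})-K[f_8]'(1)r^{-n-1}$ is correct, but the isolated term $K[f_8]'(1)r^{-n-1}$ has no compensating $(1-\epsilon)^n$ and blows up for $n\epsilon$ large; the cancellation only emerges after substituting the explicit expression for $A$ and regrouping, as above.

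Two further points worth noting. First, the paper does not split into the two regimes $n\epsilon\lesssim 1$ and $n\epsilon\gtrsim 1$: it uses a single polynomial supersolution built from $K[\tilde f_i]$, an affine piece $|K[\tilde f_i]'(1)|(r+1/\beta_i)$, and a boundary correction term, and then invokes the $\min(\epsilon/(2n),1/n^2)$ in \re{estK} to cover all $n$ uniformly; your two-tool approach can be made to work, but as you yourself flag, the matching at $n\epsilon\sim 1$ needs care. Second, for $\tilde{F_1^n}$ the drift $-(\partial_r p_*)\partial_r$ ruins the exact representation $\psi=Ar^n+Br^{-n}+K[f]$; in your ``representation regime'' $n\epsilon\gtrsim 1$ you cannot just invoke Lemma~\ref{eqn-n1} for $\tilde{F_1^n}$. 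The paper handles this by verifying that the $K$-based supersolution augmented by $+\epsilon$ remains a supersolution for the drift operator $D\mathscript{L}+ (\partial_r p_*)\partial_r$ using \re{deri}; your proposal would need a similar device there.

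Finally, your absorption argument in Step~3 presumes $M<\infty$ before absorbing; since existence of $(L_1^n,\dots,p_1^n)$ is only proved after Lemma~\ref{lem4.1a} via contraction, the paper uses the continuation in $\delta$ (Lemma~\ref{con}) to break this circularity — you should do the same or explain where the a priori finiteness of $M$ comes from.
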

 \begin{proof}
To prove \re{est1a} and \re{est2a}, we again use the idea of continuation (Appendix~\ref{appcon}), and multiply the right-hand sides of \re{L1n} -- \re{F1n} as well as \re{p1n1} by $\delta$. When $\delta=0$, it follows from the maximum principle that $\tilde{L_1^n}=\tilde{H_1^n}=\tilde{F_1^n}=0$, hence \re{est1a} clearly holds in this case. Furthermore, it can be solved from 
\begin{eqnarray}
    \label{eqn1a}&& -\frac{\p^2 p_1^n}{\p r^2} -\frac1r \frac{\p p_1^n}{\p r} + \frac{n^2}{r^2}p_1^n = 0 \mm 1-\epsilon < r < 1,\\
    \label{eqn2a}&&\frac{\p p_1^n(1)}{\p r} = 0, \mm p_1^n(1-\epsilon)=\frac{1-n^2}{(1-\epsilon)^2},
\end{eqnarray} 
that 
\begin{equation*}
    p_1^n(r) = \frac{1-n^2}{(1-\epsilon)^2[(1-\epsilon)^n + (1-\epsilon)^{-n}]}\Big(r^n + r^{-n}\Big),
\end{equation*}
and hence for $0<\epsilon\ll 1$,
\begin{equation*}
\begin{split}
    \|(p_1^n)'\|_{L^\infty(1-\epsilon,1)} &= \max\limits_{1-\epsilon\le r\le 1} \bigg|\frac{1-n^2}{(1-\epsilon)^2[(1-\epsilon)^n + (1-\epsilon)^{-n}]}n\Big(r^{n-1}-r^{-n-1}\Big)\bigg|\\
    &\le n(n^2-1)\bigg|\frac{1}{(1-\epsilon)^3}\frac{(1-\epsilon)^n-(1-\epsilon)^{-n}}{(1-\epsilon)^n+(1-\epsilon)^{-n}}\bigg| \le 2(n^3 + 1).
    \end{split}
\end{equation*}

Next we consider the case when $0< \delta \le 1$. We first assume that
\begin{eqnarray}
      \label{1a}& \|\tilde{L_1^n}\|_{L^\infty(1-\epsilon,1)} +\|\tilde{H_1^n}\|_{L^\infty(1-\epsilon,1)} +\|\tilde{F_1^n}\|_{L^\infty(1-\epsilon,1)} \le  n^2+1,\\
     \label{2a} & \| ({p_1^n})'\|_{L^\infty(1-\epsilon,1)} \le 3(n^3+1).
  \end{eqnarray}
Then clearly $|\t f_5|\le C (n^2+1)$, and $\Big(K[\t f_5](r) +
\Big|K[\t f_5]'(1)\Big|\Big(r+\frac1{\beta_1}\Big)+ \frac1{\beta_1}\Big|K[\t f_5]'(1-\epsilon)-{\beta_1} K[\t f_5](1-\epsilon)\Big|\Big)$ is a  supersolution for $\tilde{L_1^n}(r)$ when $n\ge 1$. It follows that, by Lemma \ref{eqn-n},
  \[
  \Big|\tilde{L_1^n}(r)\Big| \le K[\t f_5](r)+ \Big|K[\t f_5]'(1)\Big|\Big(r+\frac1{\beta_1}\Big)+\frac1{\beta_1}\Big|K[\t f_5]'(1-\epsilon)-{\beta_1} K[\t f_5](1-\epsilon)\Big| \le C(n^2+1)\epsilon.
  \]
The case when $n=0$ can be easily proved. Similarly, we have $ |\tilde{H_1^n}(r)| \le C(n^2+1)\epsilon$. Next let's prove the estimate for $\tilde{F_1^n}$. Under our assumptions, by \re{deri}, \re{1a}, and \re{2a},
  \[
   \|\t f_7\|_{L^\infty} \le C(n^2+1)+ C\epsilon (n^3+1) ,
   %+ C \epsilon \Big|\frac{\p\tilde{F_1^n}}{\p r}\Big|,
  \]
  so that, we can use \re{estK} to derive
  \[
      | K[\t f_7](r) | \le C(n+1) \epsilon,
      \mm | K[\t f_7]'(r) | \le C(n^2+1) \epsilon.
  \]
  The function $\phi = \frac1D  \Big\{K[\t f_7](r) +\Big|K[\t f_7]'(1)\Big|\Big(r+\frac1\beta_2\Big) + \frac1\beta_2\Big|K[\t f_7]'(1-\epsilon)-\beta_2 K[\t f_7](1-\epsilon)\Big|
    + \epsilon\Big\}$ satisfies,
\begin{equation*}
    \begin{split}
        &D\mathscript{L}[\phi] + \frac{\p \phi}{\p r} \frac{\p p_*}{\p r}\\
        =&\;\; \t f_7 + \frac1D  \Big(K[\t f_7]'(r) + \Big|K[\t f_7]'(1)\Big|\Big) \frac{\p p_*}{\p r} -\frac{1}{Dr}\Big|K[\t f_7]'(1)\Big| + \frac{n^2}{D r^2}\Big|K[\t f_7]'(1)\Big|\Big(r+\frac1\beta_2\Big)\\
        &\;\;+ \frac{n^2}{D r^2}\Big(\frac1\beta_2\Big|K[\t f_7]'( 1-\epsilon)-\beta_2 K[\t f_7](1-\epsilon)\Big| +\epsilon\Big)\\
        \ge&\;\; \t f_7 -  C\epsilon \Big\|K[\t f_7]' \Big\|_{L^\infty} +  n^2 \epsilon \; \ge \; \t f_7  -  C  (n^2+1) \epsilon^2 +   n^2\epsilon \; \ge \; \t f_7,
    \end{split}
\end{equation*}    
for $n\ge 1$, where we also make use of \re{deri} in deriving the above estimate. Therefore, it follows from the maximum principle that
\begin{equation*}
    |\tilde{F_1^n}(r)| \le \frac1D  \Big\{K[\t f_7](r) +\Big|K[\t f_7]'(1)\Big|\Big(r+\frac1\beta_2\Big) + \frac1\beta_2\Big|K[\t f_7]'(1-\epsilon)-\beta_2 K[\t f_7](1-\epsilon)\Big|
    + \epsilon\Big\} \le C(n^2+1)\epsilon.
\end{equation*}
Finally, in order to estimate $(p^n_1)'$, we use the explicit formula from Lemma \ref{eqn-n}. Taking $\eta=0$ and $G = (1-n^2)/(1-\epsilon)^2$, we obtain from Lemma \ref{eqn-n} that
\begin{equation}\label{p1nD}
\begin{split}
    (p_1^n)' &= An r^{n-1} - Bn r^{-n-1} + K[f_8]'(r),\\
    %&= \frac{n(1-\epsilon)^n (G-K[f_8](1-\epsilon))}{1+(1-\epsilon)^{2n}}\Big(r^{n-1}-r^{-n-1}\Big) - \frac{K[f_8]'(1)}{1+(1-\epsilon)^{2n}}\Big(r^{n-1}+(1-\epsilon)^{2n} r^{-n-1}\Big)
    \end{split}
\end{equation}
where $A$ and $B$ are defined in Lemma \ref{eqn-n1}. By \re{1a},
\begin{equation*}
    \|f_8\|_{L^\infty} \le C(n^2+1),
\end{equation*}
and together with \re{estK} in Lemma \ref{eqn-n}, we have
\begin{equation}\label{K9est}
    |K[f_8](r)| \le C\frac{n^2+1}{n}\epsilon,\hspace{2em} |K[f_8]'(r)| \le C(n^2+1)\epsilon.
\end{equation}
Combining \re{p1nD} with \re{A} \re{B} and \re{K9est}, we then obtain
\begin{equation*}
    \begin{split}
        \Big|(p_1^n)'\Big| \;\le&\;\;  \max\limits_{1-\epsilon\le r\le1} \Big|\frac{n(r^{n-1}-r^{-n-1})}{(1-\epsilon)^n+(1-\epsilon)^{-n}}\Big[G-k[f_8](1-\epsilon)\Big]\Big| \\
        &\quad\;+ \max\limits_{1-\epsilon\le r\le 1}\Big|\frac{r^{n-1}+(1-\epsilon)^{2n}r^{-n-1}}{1+(1-\epsilon)^{2n}} K[f_8]'(1) \Big| + \max\limits_{1-\epsilon\le r\le 1}\Big|K[f_8]'(r)\Big|\\
        \le&\;\; \Big|\frac{n\,G}{1-\epsilon}\frac{(1-\epsilon)^n-(1-\epsilon)^{-n}}{(1-\epsilon)^n+(1-\epsilon)^{-n}}\Big| + Cn\|K[f_8]\|_{L^\infty} + C\|K[f_8]'\|_{L^\infty}\\
        \le&\;\; \bigg|\frac{n(n^2-1)}{(1-\epsilon)^3}\frac{(1-\epsilon)^n-(1-\epsilon)^{-n}}{(1-\epsilon)^n+(1-\epsilon)^{-n}}\bigg| + C(n^2+1)\epsilon \\
        \le&\;\; 2(n^3 + 1),
    \end{split}
\end{equation*}
hence $ \| ({p_1^n})'\|_{L^\infty(1-\epsilon,1)} \le 2 (n^3+1)$ is valid for sufficiently small $\epsilon$. 
\end{proof}

Based on \re{est1a} and \re{est2a}, the existence and uniqueness of such a solution $(L_1^n, H_1^n, F_1^n, p_1^n)$ to the system \re{L1n1} -- \re{bdyp1n1} can be justified through the contraction mapping principle, hence we have the following lemma.

\begin{lem}
For each nonnegative $n$ and sufficiently small $\epsilon$, the system \re{L1n1} -- \re{bdyp1n1} admits a unique solution $(L_1^n, H_1^n, F_1^n, p_1^n)$.
\end{lem}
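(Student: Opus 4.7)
The plan is to apply the Banach contraction principle to the linear but coupled system \re{L1n1}--\re{bdyp1n1}, using Lemma \ref{lem4.1a} to provide both the self-map bound and the contraction estimate. First I would set up the iteration. Given an input triple $(L,H,F)\in C([1-\epsilon,1])^3$, step one is to solve the linear two-point ODE BVP \re{p1n1} with boundary data \re{bdy1n1} at $r=1$ and \re{bdyp1n1} at $r=1-\epsilon$, treating $f_8$ in \re{f9} as a prescribed function of $r$ depending linearly on $(L,H,F)$; Lemmas \ref{eqn-n}--\ref{eqn-n1} produce the unique solution $p$ and its derivative in closed form. Step two is to plug $(L,H,F)$ together with $p'$ into the right-hand sides of the shifted equations \re{L1n}--\re{F1n} and solve the three resulting \emph{decoupled} linear ODE BVPs for $(\tilde{\hat L},\tilde{\hat H},\tilde{\hat F})$, each of which is standard; inverting the shifts \re{defL}--\re{defF} yields $(\hat L,\hat H,\hat F)$, and the iteration map is $\mathcal{T}(L,H,F):=(\hat L,\hat H,\hat F)$.

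The next step is to check that $\mathcal{T}$ maps the closed ball $\mathcal{N}:=\{(L,H,F): \|L\|_{L^\infty}+\|H\|_{L^\infty}+\|F\|_{L^\infty}\le n^2+1\}$ into itself. This is exactly the content of Lemma \ref{lem4.1a}: the a priori bound \re{est1a} gives $\|\tilde{\hat L}\|_{L^\infty}+\|\tilde{\hat H}\|_{L^\infty}+\|\tilde{\hat F}\|_{L^\infty}\le C(n^2+1)\epsilon$, and by \re{deri} the boundary adjustments introduced in \re{defL}--\re{defF} are themselves $O(\epsilon)$, so the $L^\infty$ norms of $(\hat L,\hat H,\hat F)$ are likewise $O((n^2+1)\epsilon)$, which is well within $n^2+1$ once $\epsilon$ is small enough.

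To obtain the contraction I would apply the same machinery to the difference of two iterates. Since the entire system is \emph{linear} in the unknowns, the difference $\mathcal{T}(L^{(1)},H^{(1)},F^{(1)})-\mathcal{T}(L^{(2)},H^{(2)},F^{(2)})$ satisfies the homogeneous analogue of \re{L1n1}--\re{bdyp1n1} (the inhomogeneous boundary terms produced by $L_*,H_*,F_*$ cancel out) with a forcing controlled linearly by the input difference. Rerunning the supersolution argument behind Lemma \ref{lem4.1a} in this homogeneous setting yields
\[
 \|\mathcal{T}(L^{(1)},H^{(1)},F^{(1)})-\mathcal{T}(L^{(2)},H^{(2)},F^{(2)})\|_{L^\infty}\le C(n^2+1)\epsilon\,\|(L^{(1)}-L^{(2)},H^{(1)}-H^{(2)},F^{(1)}-F^{(2)})\|_{L^\infty}.
\]
Fixing $n$ and choosing $\epsilon$ so small that $C(n^2+1)\epsilon<1/2$ makes $\mathcal{T}$ a strict contraction on $\mathcal{N}$, and Banach's fixed point theorem delivers a unique $(L_1^n,H_1^n,F_1^n)$; pairing it with the $p_1^n$ produced by the intermediate ODE solve furnishes the unique solution quadruple.

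The step I expect to be most delicate is keeping track of the coupling through $(p_1^n)'$, which re-enters $\tilde f_7$ via the term $\frac{\p F_*}{\p r}\frac{\p p_1^n}{\p r}$ in \re{F1n}. This is precisely where the bound \re{est2a} on $(p_1^n)'$ (of order $n^3$) must be married to the estimate $|F_*'|\le C\epsilon$ from \re{deri}, so that the contribution to $\tilde f_7$ is of order $\epsilon(n^3+1)$ and can be absorbed by the same supersolution construction that underlies Lemma \ref{lem4.1a}. Once this point is verified, uniqueness is automatic from the contraction argument itself and no separate Fredholm-alternative step is needed.
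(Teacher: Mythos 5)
Your proposal is correct and takes essentially the same approach as the paper, which disposes of this lemma in a single sentence citing the contraction mapping principle together with the a priori bounds \re{est1a}--\re{est2a}. Your fill-in of the missing details (solve for $p$ first, then the three decoupled shifted equations; self-map via \re{est1a} and the $O(\epsilon)$ shifts from \re{deri}; contraction via the homogeneous difference problem with a factor $O(\epsilon)$ from the $K[\cdot]$ estimates, including the coupling through $F_*'\,(p_1^n)'$) is sound; if anything, your contraction constant $C(n^2+1)\epsilon$ is conservative, since in the difference problem the $n^2$-dependent constants in $\tilde f_5,\tilde f_6,\tilde f_7$ and the boundary data $G$ cancel, leaving an $O(\epsilon)$ contraction rate.
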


By \re{est2a} we already derived the estimate
\begin{equation*}
    \Big|\frac{\p p_1^n(r)}{\p r}\Big| \le 2(n^3 +1), \hspace{2em} 1-\epsilon\le r\le 1.
\end{equation*}
This estimate, however, is not enough; we need a sharper bound for $\frac{\p p_1^n(1-\epsilon)}{\p r}$. To do that, we start with rewriting \re{defL} -- \re{defF} in the same way as in \re{Ls} -- \re{Fs}.

Evaluating \re{r1} at $r=1-\epsilon$, and using \re{Ls} -- \re{Fs}, we obtain
\begin{equation*}
\begin{split}
    \frac{\p^2 L_*(1-\epsilon)}{\p r^2} &= \Big(k_1\frac{(M_0-F_*)L_*}{K_1+L_*}+\rho_1 L_*\Big)\Big|_{r=1-\epsilon} - \frac{1}{1-\epsilon}\frac{\p L_*(1-\epsilon)}{\p r}\\
    &=\rho_3(\gamma+H_0)\Big( \frac{k_1 M_0  }{\lambda K_1+\rho_3(\gamma+H_0)}+\frac{\rho_1 }\lambda  \Big)- \frac{1}{1-\epsilon}\frac{\p L_*(1-\epsilon)}{\p r} + O(\epsilon).
    \end{split}
\end{equation*}
Recall that the boundary condition for $L_*$ is
\begin{equation*}
    \frac{\p L_*(1-\epsilon)}{\p r} = \beta_1(L_*(1-\epsilon)-L_0) = \beta_1\Big(\frac{\rho_3(\gamma+H_0)}{\lambda} + O(\epsilon) - \frac{\rho_3(\gamma+H_0)}{\lambda} - \frac{\epsilon \mu}{\lambda}\Big) = O(\epsilon).
\end{equation*}
We combine the above two equations to derive 
\begin{eqnarray*}
    \frac{1}{\beta_1}\Big(\frac{\p^2 L_*}{\p r^2}-\beta_1\frac{\p L_*}{\p r}\Big)\Big|_{r=1-\epsilon} = \frac{\rho_3(\gamma+H_0)}{\beta_1}\Big( \frac{k_1 M_0  }{\lambda K_1+\rho_3(\gamma+H_0)}+\frac{\rho_1 }\lambda  \Big) + O(\epsilon).
\end{eqnarray*}
Similarly, we can also get
\begin{gather*}
    \frac{1}{\beta_1}\Big(\frac{\p^2 H_*}{\p r^2}-\beta_1\frac{\p H_*}{\p r}\Big)\Big|_{r=1-\epsilon} = \frac{\rho_2 H_0}{\beta_1} + O(\epsilon),\\
    \frac{1}{\beta_2}\Big(\frac{\p^2 F_*}{\p r^2}-\beta_2\frac{\p F_*}{\p r}\Big)\Big|_{r=1-\epsilon} = -\frac{\rho_3(\gamma+H_0)}{\beta_2 D} \;
 \frac{k_1 M_0  }{\lambda K_1+\rho_3(\gamma+H_0)} + O(\epsilon).
\end{gather*}
Comparing with the definitions of $L_*^1$, $H_*^1$ and $F_*^1$ in \re{Ls} -- \re{Fs}, we find that
\begin{gather*}
    \frac{\rho_3(\gamma+H_0)}{\beta_1}\Big( \frac{k_1 M_0  }{\lambda K_1+\rho_3(\gamma+H_0)}+\frac{\rho_1 }\lambda  \Big) = \frac{\mu}{\lambda} - L_*^1,\\
    \frac{\rho_2 H_0}{\beta_1} = -H_*^1, \hspace{2em} -\frac{\rho_3(\gamma+H_0)}{\beta_2 D} \;
 \frac{k_1 M_0  }{\lambda K_1+\rho_3(\gamma+H_0)} = -F_*^1.
\end{gather*}
Therefore, the above equations indicate
\begin{gather}
    \label{bdysL} \frac{1}{\beta_1}\Big(\frac{\p^2 L_*}{\p r^2}-\beta_1\frac{\p L_*}{\p r}\Big)\Big|_{r=1-\epsilon} = \frac{\mu}{\lambda} - L_*^1 + O(\epsilon),\\
    \label{bdysH} \frac{1}{\beta_1}\Big(\frac{\p^2 H_*}{\p r^2}-\beta_1\frac{\p H_*}{\p r}\Big)\Big|_{r=1-\epsilon} = -H_*^1 + O(\epsilon),\\
    \label{bdysF}  \frac{1}{\beta_2}\Big(\frac{\p^2 F_*}{\p r^2}-\beta_2\frac{\p F_*}{\p r}\Big)\Big|_{r=1-\epsilon} = -F_*^1+O(\epsilon).
\end{gather}
After we show \re{bdysL} -- \re{bdysF}, we can combine them with \re{defL} -- \re{defF} as well as \re{est1a} to claim that
\begin{eqnarray}
   \label{expl1n} L_1^n & = & \mu/\lambda - L_*^1 + O((n^2+1) \epsilon),\\
    \label{exph1n} H_1^n & = & -H_*^1 + O((n^2+1) \epsilon),\\
    \label{expf1n} F_1^n & = & -F_*^1 + O((n^2+1) \epsilon).
\end{eqnarray}

With \re{expl1n} -- \re{expf1n}, we are able to derive a more delicate estimate for $\frac{\p p_1^n(1-\epsilon)}{\p r}$. Substituting \re{Ls} -- \re{Fs} and \re{expl1n} -- \re{expf1n} all into \re{f9}, recalling also \re{mum} and \re{rho4}, we obtain
\begin{equation}
\label{f9e} f_8 = \frac{\mu}{\gamma+H_0} - \frac{1}{M_0}\Big(\frac{M_0(\lambda L_*^1-\rho_3 H_*^1)}{\gamma+H_0} - \rho_4 F_*^1\Big) + O((n^2+1) \epsilon) = \frac{\mu}{\gamma+H_0} + O((n^2+1) \epsilon),
\end{equation}
and we are ready to establish the following lemma.
\begin{lem}
  \label{sharpest}
For each nonnegative $n$ and small $0<\epsilon\ll1$, the following inequality holds:
 \begin{equation}
      \label{sharp}
      \bigg|\frac{\p p_1^n(1-\epsilon)}{\p r} -\frac{\epsilon \mu}{\gamma+H_0} -\frac{n[(1-\epsilon)^{2n}-1]}{(1-\epsilon)[(1-\epsilon)^{2n}+1]}G \bigg| \le C (n^2+1) \epsilon^2,
 \end{equation}
where $G=(1-n^2)/(1-\epsilon)^2$, and the constant $C$ is independent of $\epsilon$ and $n$.
\end{lem}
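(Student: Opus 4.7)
The plan is to apply the explicit representation of Lemmas \ref{eqn-n}--\ref{eqn-n1} to the ODE \re{p1n1}, \re{bdy1n1}, \re{bdyp1n1} for $p_1^n$, after splitting the source $f_8$ into a dominant constant and a small remainder. By the identity \re{f9e}, write $f_8 = \eta + g$ with $\eta = \mu/(\gamma+H_0)$ and $\|g\|_{L^\infty}\le C(n^2+1)\epsilon$. For $n\ge 1$, Lemma \ref{eqn-n} then provides
\begin{equation*}
 p_1^n \;=\; \psi_1 + A r^n + B r^{-n} + K[g](r), \qquad B = A + \tfrac{1}{n}K[g]'(1),
\end{equation*}
where $\psi_1$ is the explicit particular solution to $\mathscript{L}[\psi_1]=\eta$ with $\psi_1'(1)=0$, and $A$ is recovered from $p_1^n(1-\epsilon)=G$ via Lemma \ref{eqn-n1}; the case $n=0$ is simpler, as the $Br^{-n}$ term is absent and $K[g]'(1)=0$ automatically.

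Differentiating and evaluating at $r=1-\epsilon$, the power part $An r^{n-1}-Bn r^{-n-1}$ simplifies, using $(1-\epsilon)^{-n-1}(1-\epsilon)^{2n}=(1-\epsilon)^{n-1}$ to combine the two $K[g]'(1)$ contributions, into
\begin{equation*}
\frac{n\bigl[(1-\epsilon)^{2n}-1\bigr]}{(1-\epsilon)\bigl[1+(1-\epsilon)^{2n}\bigr]}\bigl(G-\psi_1(1-\epsilon)-K[g](1-\epsilon)\bigr) \;-\; \frac{2(1-\epsilon)^{n-1}}{1+(1-\epsilon)^{2n}}K[g]'(1).
\end{equation*}
Subtracting the two explicit terms appearing inside $|\cdot|$ in \re{sharp} leaves a residual of the form $\Delta_\psi+\Delta_K$, where
\begin{equation*}
\Delta_\psi \;=\; \psi_1'(1-\epsilon) \;-\; \frac{n\bigl[(1-\epsilon)^{2n}-1\bigr]}{(1-\epsilon)\bigl[1+(1-\epsilon)^{2n}\bigr]}\psi_1(1-\epsilon) \;-\; \epsilon\eta,
\end{equation*}
and $\Delta_K$ gathers the three $K[g]$-type corrections. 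Using the estimate \re{estK}, $|K[g](r)|\le(\epsilon/(2n))\|g\|_{L^\infty}$ and $|K[g]'(r)|\le(\epsilon/2)\|g\|_{L^\infty}$, together with the elementary bound $|n[(1-\epsilon)^{2n}-1]|/[1+(1-\epsilon)^{2n}]\le n$, directly yields $|\Delta_K|\le C(n^2+1)\epsilon^2$.

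It remains to bound $\Delta_\psi$, and here I split according to the size of $n\epsilon$. When $n\ge 1/\epsilon$ we have $n\ne 0,2$, and the explicit formula of Lemma \ref{eqn-n} gives $|\psi_1(1-\epsilon)|+|\psi_1'(1-\epsilon)|\le C/n^2\le C\epsilon^2$; while the coefficient multiplying $\psi_1(1-\epsilon)$ is at most $n$, so each summand of $\Delta_\psi$ is $O(\epsilon)$, and this is absorbed by the condition $(n^2+1)\epsilon^2\ge 1$ in this regime. When $n<1/\epsilon$, Lemma \ref{nepsilon} gives $\alpha := (1-\epsilon)^n = 1 - n\epsilon + O(n^2\epsilon^2)$. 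In the generic subcase $n\ne 0,2$, substituting $\psi_1(r)=\frac{\eta}{n^2-4}(r^2-\frac{2}{n}r^n)$ rewrites $\Delta_\psi+\epsilon\eta$ as
\begin{equation*}
\frac{\eta\bigl\{(n+2)(1-\epsilon)^2 - (n-2)(1-\epsilon)^2\alpha^2 - 4\alpha\bigr\}}{(n^2-4)(1-\epsilon)(1+\alpha^2)},
\end{equation*}
and Taylor expanding the bracketed numerator gives $2(n^2-4)\epsilon + O((n^3+n)\epsilon^2)$, while the denominator equals $2(n^2-4)[1+O(n\epsilon)]$, so the quotient is $\epsilon\eta + O((n+1)\epsilon^2)$. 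The remaining subcases $n=0$ and $n=2$ are settled directly from the logarithmic forms of $\psi_1$ in Lemma \ref{eqn-n} using $\log(1-\epsilon) = -\epsilon - \epsilon^2/2 + O(\epsilon^3)$, giving $|\Delta_\psi|\le C\epsilon^2$ in each.

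The main obstacle is this delicate cancellation in the regime $n\epsilon<1$: the individual contributions $\psi_1'(1-\epsilon)$ and the $T\,\psi_1(1-\epsilon)$ term (where $T$ denotes the coefficient) are each much larger than $\epsilon\eta$ (of order $\eta/n$ or $\eta n \epsilon$), yet their leading $O(1)$ and $O(n\epsilon)$ pieces must cancel exactly so that only $\epsilon\eta$ remains at leading order. This forces one to retain the quadratic correction $n^2\epsilon^2/2$ in the Lemma \ref{nepsilon} expansion of $(1-\epsilon)^n$, and to verify that the apparent $O((n^3+n)\epsilon^2)$ residue in the numerator is precisely absorbed by the $(n^2-4)$ factor in the denominator to produce the uniform $O((n^2+1)\epsilon^2)$ bound.
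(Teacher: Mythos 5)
Your proof follows the same route as the paper: you set $\eta=\mu/(\gamma+H_0)$, split $f_8=\eta+g$ using \re{f9e}, invoke the explicit representation of Lemmas \ref{eqn-n}--\ref{eqn-n1} together with the $K$-operator bounds \re{estK}, and then isolate the cancellation between $\psi_1'(1-\epsilon)$ and the coefficient times $\psi_1(1-\epsilon)$. The paper's proof states this cancellation more tersely (via the expansions of $\psi_1(1-\epsilon)$ and $\psi_1'(1-\epsilon)$ and a one-line substitution), whereas you carry out the algebra and the $n\epsilon$-regime split explicitly, but the underlying argument is identical.
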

\begin{proof}
The estimate \re{sharp} shall be established  by using the explicit formula from Lemma \ref{eqn-n}. Specifically, we take $\eta = \frac{\mu}{\gamma+H_0}$ and $f(r) = f_8 - \eta $. From \re{f9e}, we have
\begin{equation*}
    \|f\|_{L^\infty} = \|f_8 - \eta\|_{L^\infty} \le C(n^2 + 1) \epsilon;
\end{equation*}
we then combine it with Lemma \ref{eqn-n} to derive
\begin{equation}\label{estkf}
    | K[f](r) | \le  C(n+1)\epsilon^2, \mm
  | K[f]'(r) | \le  C(n^2+1)\epsilon^2.
\end{equation}
Following Lemmas \ref{eqn-n} and   \ref{eqn-n1}, we can explicitly solve $p_1^n$  as % 分词从句缺主语
\begin{equation*}
    p_1^n(r) = \psi_1(r) + A r^n + B r^{-n} + K[f](r).
\end{equation*}
For $\psi_1(r)$, we use \re{psi} and Lemma \ref{nepsilon} to obtain
\begin{equation*}
    \psi_1(1-\epsilon)  =\frac{\eta}{n(n+2)} + O\Big(\frac{\epsilon^2}{n}\Big), \mm % \mm n\neq 0, 
%\end{equation*}
%\begin{equation*}
    \psi_1'(1-\epsilon) =  \frac{2\eta\epsilon}{n+2} + O(\epsilon^2), \mm n\neq 0,
\end{equation*}
and
\be \label{psi10}
 \psi_1(1-\epsilon) = O(\epsilon^2), \m \psi_1'(1-\epsilon) =
 {\eta\epsilon} +O(\epsilon^2), \m n=0.
\ee
Together with \re{A} \re{B} as well as \re{estkf} the first derivative of $p_1^n$ at $r=1-\epsilon$ evaluates to
\begin{equation*}
\begin{split}
    \frac{\p p_1^n(1-\epsilon)}{\p r} &=\; \psi_1'(1-\epsilon) + An (1-\epsilon)^{n-1} - Bn(1-\epsilon)^{-n-1} + K[f]'(1-\epsilon)\\
    &=\;\psi_1'(1-\epsilon) + \frac{n[(1-\epsilon)^{2n}-1]}{(1-\epsilon)[(1-\epsilon)^{2n}+1]}\Big(G-\psi_1(1-\epsilon)\Big) + O((n^2+1)\epsilon^2)\\
    &=\;\eta \epsilon + \frac{n[(1-\epsilon)^{2n}-1]}{(1-\epsilon)[(1-\epsilon)^{2n}+1]}G + O((n^2+1)\epsilon^2), \mm n\neq 0,
    \end{split}
\end{equation*}
which is equivalent to \re{sharp}. It is clear from \re{psi10} that
the above formula is also valid for $n=0$.
\end{proof}

Like in \re{J1}, we denote
\begin{equation}
    \label{J2}
    J_2^n(\mu,\rho_4) = \frac{1}{\epsilon^2}\bigg[\frac{\p p_1^n(1-\epsilon)}{\p r}-\frac{\epsilon \mu}{\gamma+H_0} -\frac{n(1-n^2)[(1-\epsilon)^{2n}-1]}{(1-\epsilon)^3[(1-\epsilon)^{2n}+1]}\bigg],
\end{equation}
which indicates
\begin{equation}
    \label{J22}
    \frac{\p p_1^n(1-\epsilon)}{\p r} = \frac{\epsilon \mu}{\gamma+H_0}+ \frac{n(1-n^2)[(1-\epsilon)^{2n}-1]}{(1-\epsilon)^3[(1-\epsilon)^{2n}+1]}+\epsilon^2 J_2^n(\mu,\rho_4).
\end{equation}
From Lemma \ref{sharpest}, we immediately obtain that there exists a constant $C$ which is independent of $n$ and $\epsilon$ such that
\begin{equation*}
    |J_2^n(\mu,\rho_4)|\le C(n^2+1).
\end{equation*}
In addition, we also need to estimate $\frac{\dif J_2^n}{\dif \mu}$. To do that, we take $\mu$ derivative of equation \re{J22} to obtain
\begin{equation}
\label{J22D}
    \frac{\dif J_2^n}{\dif \mu} = \frac{\p J_2^n}{\p \mu} + \frac{\p J_2^n}{\p \rho_4 }\frac{\p \rho_4}{\p \mu} = \frac{1}{\epsilon^2} \Big[\frac{\p}{\p r}\Big(\frac{\p p_1^n}{\p \mu}\Big)\Big|_{r=1-\epsilon} - \frac{\epsilon}{\gamma+H_0}\Big].
\end{equation}
In order to estimate the right-hand side of \re{J22D}, we differentiate the whole system \re{L1n1} -- \re{bdyp1n1} in $\mu$ and follow the same procedures as in Lemmas \ref{lem4.1a} and   \ref{sharpest}. Consequently, a similar result as \re{sharp} can be obtained, i.e.,
\begin{equation*}
    \bigg|\frac{\p}{\p r}\Big(\frac{\p p_1^n}{\p \mu}\Big)\Big|_{r=1-\epsilon} - \frac{\epsilon}{\gamma+H_0}\bigg| \le C(n^2+1)\epsilon^2.
\end{equation*}
Combined with \re{J22D}, it follows that $\Big| \frac{\dif J_2^n}{\dif \mu}\Big| \le C(n^2+1)$. Therefore we have the following lemma. 

\begin{lem}\label{J2prop} 
For function $J_2^n(\mu,\rho_4)$ defined in \re{J2}, there exists a constant $C$ which is independent of $\epsilon$ and $n$ such that
\begin{equation}
    \label{J2bounds}
    |J_2^n(\mu,\rho_4(\mu))| \le C(n^2+1),\hspace{2em} \Big|\frac{\dif J_2^n(\mu, \rho_4(\mu))}{\dif \mu}\Big| \le C(n^2+1).
\end{equation}
\end{lem}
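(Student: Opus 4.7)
The plan is to read off the first bound directly from the sharp estimate already established in Lemma \ref{sharpest}, and to obtain the second bound by differentiating the entire linearized system in $\mu$ and rerunning the estimates of Lemmas \ref{lem4.1a} and \ref{sharpest} on the derivative system. Specifically, I would first note that
\[
 \frac{n(1-n^2)[(1-\epsilon)^{2n}-1]}{(1-\epsilon)^3[(1-\epsilon)^{2n}+1]} = \frac{n[(1-\epsilon)^{2n}-1]}{(1-\epsilon)[(1-\epsilon)^{2n}+1]}\cdot G
\]
with $G = (1-n^2)/(1-\epsilon)^2$, so the expression inside the brackets in the definition \re{J2} of $J_2^n$ is exactly the quantity estimated in \re{sharp}. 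Dividing \re{sharp} by $\epsilon^2$ immediately gives $|J_2^n(\mu,\rho_4(\mu))|\le C(n^2+1)$.

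For the derivative bound I would differentiate the whole linear elliptic system \re{L1n1}--\re{bdyp1n1} with respect to $\mu$, viewing $\rho_4 = \rho_4(\mu)$ as a function of $\mu$ via Theorem \ref{thm21} and Lemma with formula \re{rho4D}. This yields a new system for $(\p_\mu L_1^n,\p_\mu H_1^n,\p_\mu F_1^n,\p_\mu p_1^n)$ of exactly the same type as the original, with right-hand sides obtained by differentiating $f_5, f_6, f_7, f_8$ and using that $L_*, H_*, F_*, p_*$ depend smoothly on $\mu$; the boundary datum on $p$-component at $r=1-\epsilon$ is $0$ since $(1-n^2)/(1-\epsilon)^2$ is independent of $\mu$. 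Because $J_2^n$ only subtracts off terms linear in $\mu$ (with a coefficient $\epsilon/(\gamma+H_0)$) and terms independent of $\mu$, the chain rule gives
\[
\frac{\dif J_2^n}{\dif\mu} \;=\; \frac{1}{\epsilon^2}\Big[\frac{\p}{\p r}\Big(\frac{\p p_1^n}{\p \mu}\Big)\Big|_{r=1-\epsilon} - \frac{\epsilon}{\gamma+H_0}\Big],
\]
where $\p/\p\mu$ denotes the total derivative incorporating the $\rho_4(\mu)$-dependence.

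The next step is to repeat, verbatim, the argument of Lemmas \ref{lem4.1a} and \ref{sharpest} applied to the differentiated system. This gives uniform bounds on the $\mu$-derivatives of $L_1^n, H_1^n, F_1^n$ of the same order as the originals, followed by the sharp expansion of $\p_r(\p_\mu p_1^n)(1-\epsilon)$. The leading constant term in the analogue of \re{f9e} is $1/(\gamma+H_0)$ (by differentiating the expression $\mu/(\gamma+H_0)$ with respect to $\mu$ and noting that the cancellation used in deriving \re{f9e} persists after differentiation thanks precisely to formula \re{rho4D}), producing
\[
\Big|\frac{\p}{\p r}\Big(\frac{\p p_1^n}{\p \mu}\Big)\Big|_{r=1-\epsilon} - \frac{\epsilon}{\gamma+H_0}\Big| \;\le\; C(n^2+1)\epsilon^2,
\]
and dividing by $\epsilon^2$ yields the second inequality of \re{J2bounds}.

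The main obstacle I anticipate is verifying the cancellation of leading-order terms in the differentiated version of \re{f9e}: this is the analogue of the cancellation in the proof of Theorem \ref{thm21} that fixes $\rho_4$ as a function of $\mu$, and it is precisely formula \re{rho4D} that guarantees the $O(\epsilon)$ terms drop out, leaving only the clean leading constant $1/(\gamma+H_0)$. Once this cancellation is checked, the estimates on the $K[\,\cdot\,]$ operator from Lemma \ref{eqn-n} and the supersolution construction from Lemma \ref{lem4.1a} apply without modification, since the differentiated system has the same structure and its right-hand sides inherit bounds of the form $C(n^2+1)\epsilon$ after subtracting the leading constant.
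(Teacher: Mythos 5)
Your proposal is correct and follows essentially the same route as the paper: the first bound is read directly from Lemma \ref{sharpest} after rewriting the $G$-term, and the second bound comes from differentiating \re{J22} and the entire system \re{L1n1}--\re{bdyp1n1} in $\mu$ (with $\rho_4=\rho_4(\mu)$) and rerunning Lemmas \ref{lem4.1a} and \ref{sharpest} on the differentiated system. Your explicit identification of the cancellation mechanism---that \re{rho4D} is precisely what makes the $\mu$-derivative of the $O(\epsilon)$ part of \re{f9e} again $O(\epsilon)$, leaving the clean constant $1/(\gamma+H_0)$---is the right key point and is exactly what the paper relies on implicitly.
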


At this point, we are finally ready to prove our main result Theorem \ref{result}. 

\begin{proof}[\textbf{Proof of Theorem \ref{result}}]
Substituting \re{nt2} into \re{FreD}, we obtain the Fr\'echet derivative of $\mathcal{F}(\tilde{R},\mu)$ in $\tilde{R}$ at the point $(0,\mu)$, namely,
  \begin{equation*}
      [\mathcal{F}_{\tilde{R}}(0,\mu)]\cos(n\theta) = \Big(\frac{\p^2 p_*(1-\epsilon)}{\p r^2} + \frac{\p p_1^n(1-\epsilon)}{\p r}\Big)\cos(n\theta);
  \end{equation*}
we then combine the above formula with \re{J1} and \re{J22} to derive
\begin{equation}\label{Fre}
      [\mathcal{F}_{\tilde{R}}(0,\mu)]\cos(n\theta) = \Big(
      \frac{\epsilon \mu}{\gamma+H_0} + \frac{n(1-n^2)[(1-\epsilon)^{2n}-1]}{(1-\epsilon)^3[(1-\epsilon)^{2n}+1]} + \epsilon^2(J_1+J_2^n)\Big)\cos(n\theta).
\end{equation}
For fixed nonnegative $n$, 
\begin{equation*}
    \frac{(1-\epsilon)^{2n}-1}{(1-\epsilon)[(1-\epsilon)^{2n}+1]} = -n\epsilon + O(n^2 \epsilon^2),
\end{equation*}
when $\epsilon$ is sufficiently small so that $n\epsilon < 1$. In this case, the equation $[\mathcal{F}_{\tilde{R}}(0,\mu)]\cos(n\theta)=0$ is satisfied if and only if
\begin{equation}\label{U}
    U(\mu, \epsilon) \triangleq \frac{\mu}{\gamma+H_0} - n^2(1-n^2) + \epsilon (J_1 + J_2^n) + O(n^5 \epsilon) = 0.
\end{equation}
Notice that both $J_1$ and $J_2^n$ contain $\mu$, it is impossible to solve $\mu$ explicitly from equation \re{U}. However, we are able to claim that for each small $\epsilon$, \re{U} admits a unique solution $\mu$. To prove it, we first find that $U((\gamma+H_0)n^2(1-n^2),0) = 0$; in addition, if we take partial $\mu$ derivative on both sides of \re{U} and evaluate the value at $(\mu,0)$, we have
\begin{equation*}
    \frac{\p}{\p \mu}U(\mu,0) = \Big[\frac{1}{\gamma+H_0} + \epsilon \Big(\frac{\dif J_1}{\dif \mu} + \frac{\dif J_2^n}{\dif \mu}\Big)\Big]\Big|_{\epsilon=0}=\frac{1}{\gamma+H_0} > 0.
\end{equation*}
Therefore, it follows from the implicit function theorem that, for each small $\epsilon$, there exists a unique solution, which is close to $(\gamma+H_0)n^2(1-n^2)$, such that equation \re{U} is satisfied; we denote the unique solution by $\mu_n$. In what follows, we shall justify that $\mu=\mu_n$ with $n\ge 2$ and $\mu_n>\mu_c$ is a bifurcation point for the system \re{e1} -- \re{e8} when $\epsilon$ is sufficiently small. 

What we need to do is to verify the four assumptions of the Crandall-Rabinowitz theorem (Theorem \ref{CRthm}) at the point $\mu=\mu_n$. To begin with, the assumption (1) is naturally satisfied due to Theorem \ref{thm21}. To be more specific, for each $\mu_n > \mu_c$, we can find a small $\epsilon^*>0$, such that for $0<\epsilon<\epsilon^*$, there exists a unique radially symmetric stationary solution, hence $\mathcal{F}(0,\mu_n)=0$. Next let's proceed to verify the assumption (2) and (3) for a fixed small $\epsilon$. It suffices to show that for every $m$, $m\neq n$,
\begin{equation}
    \label{mneqn} [\mathcal{F}_{\tilde{R}}(0,\mu_n)]\cos(m\theta) \neq 0, \mm m\neq n,
  %  \mu_m \neq \mu_n,
\end{equation}
or equivalently,
\begin{equation}
    \label{mneqn1} W(m)\triangleq  \frac{\epsilon \mu_n}{\gamma+H_0} + \frac{m(m^2-1)[1-(1-\epsilon)^{2m}]}{(1-\epsilon)^3[(1-\epsilon)^{2m}+1]} + \epsilon^2\Big(J_1(\mu_n,\rho_4)+J_2^m(\mu_n,\rho_4)\Big) \neq 0, \m m\neq n.
\end{equation}
To establish statement \re{mneqn} (or statement \re{mneqn1}), we 
split the proof into three cases:

\vspace{5pt}
\noindent \underline{Case (i)} $m>\max\{2n,m_0\}$ and $m\epsilon \le \frac12$, where $m_0$ will be determined later. Using the inequality $m\epsilon \le \frac12$, together with Lemma \ref{nepsilon}, we deduce that
\begin{equation*}
    (1-\epsilon)^{2m} \le 1- 2m\epsilon + 2m^2\epsilon^2 \le 1 - 2m\epsilon + m\epsilon \le 1-m\epsilon,
\end{equation*}
hence (recall that $n\ge 2$ so that $m > 4$ in this case)
\begin{equation}\label{mest}
    \frac{m(m^2-1)[1-(1-\epsilon)^{2m}]}{(1-\epsilon)^3 [(1-\epsilon)^{2m}+1]} \ge \frac\epsilon 2 m^2(m^2-1).
\end{equation}
In addition, by Lemma \ref{J1prop} and Lemma \ref{J2prop}, there exists a constant $C$ which does not depend on $\epsilon$ and $m$ such that,
\begin{equation}\label{J12}
    |J_1 + J_2^m| \le |J_1| + |J_2^m| \le C(m^2+1).
\end{equation}
Substituting \re{mest} and \re{J12} into \re{mneqn1}, we derive
\begin{equation*}
    W(m) \ge \frac{\epsilon \mu_n}{\gamma+H_0} + \frac\epsilon 2 m^2(m^2-1) - C\epsilon^2 (m^2+1) = \epsilon \Big[ \frac{\mu_n}{\gamma+H_0} + \frac12 m^2(m^2-1) - C\epsilon (m^2+1)\Big].
\end{equation*}
It is clear that $W(m)>0$ for large $m$ as the leading order term in the brackets is $\frac{m^4}{2}$; hence we can find $m_0>0$ such that when $m> m_0$,
    \begin{equation*}
        W(m) >  0.
    \end{equation*}

\noindent \underline{Case (ii)} $m>\max\{2n,m_0\}$ and $m\epsilon > \frac12$. In this case, we have
    \begin{equation*}
        (1-\epsilon)^{2m} \le \Big(1-\frac{1}{2m}\Big)^{2m} \le e^{-1},
    \end{equation*}
and hence (since $m>\max\{2n,m_0\}$, we also have $m > 4$ in this case)
\begin{equation*}
    \frac{m(m^2-1)[1-(1-\epsilon)^{2m}]}{(1-\epsilon)^3 [(1-\epsilon)^{2m}+1]} \ge \frac{1-e^{-1}}{2} m(m^2-1).
\end{equation*}
Similar as in Case (i), we substitute the above inequality as well as \re{J12} into \re{mneqn1}, and derive
    \begin{equation*}
        W(m) \ge \frac{\epsilon \mu_n}{\gamma+H_0} + \frac{1-e^{-1}}{2} m(m^2-1)  - C\epsilon^2 (m^2+1);
    \end{equation*}
notice that the leading order term is $\frac{1-e^{-1}}{2}m^3$, we can easily find a bound for $\epsilon$, denoted by $E_1$, such that when $\epsilon<E_1$,
    \begin{equation*}
        W(m) \ge \frac{1-e^{-1}}{4} m(m^2-1) > 0.
    \end{equation*}

\noindent \underline{Case (iii)} $0\le m\le \max\{2n,m_0\}$. From our previous analysis, we know that $\mu_n$ is close to $(\gamma+H_0)n^2(1-n^2)$.
Since $\max\{2n,m_0\}$ is a finite number, 
we can choose $\epsilon$ small and similarly
define all $\mu_m$ for $m\le \max\{2n,m_0\}$
so that $\mu_m$ is close to $(\gamma+H_0)m^2(1-m^2)$; 
in this case $W(m)\neq 0$
if and only if $\mu_n\neq \mu_m$.
To be more specific, we have
\begin{equation*}
    \lim\limits_{\epsilon\rightarrow 0}\mu_n = (\gamma+H_0)n^2(1-n^2),\hspace{2em} \lim\limits_{\epsilon\rightarrow 0}\mu_m = (\gamma+H_0)m^2(1-m^2).
\end{equation*}
Since $m\neq n$ and $n\ge 2$, it follows that
\begin{equation}\label{lim}\begin{split}
    \lim\limits_{\epsilon\rightarrow 0} |\mu_n - \mu_m| &\ge \min\{ \lim\limits_{\epsilon\rightarrow 0} |\mu_n - \mu_{n-1}|,  \lim\limits_{\epsilon\rightarrow 0} |\mu_n - \mu_{n+1}| \} \\
    &= (\gamma+H_0)(4n^3-6n^2+2n) \ge 12(\gamma+H_0).\end{split}
\end{equation}
Recall again  $m$ is bounded in this case, we can find a bound for $\epsilon$, denoted by $E_2$, such that when $\epsilon < E_2$,
\begin{equation*}
    \Big|\mu_n - \lim\limits_{\epsilon\rightarrow 0}\mu_n\Big| + \max\limits_{0\le m\le \max\{2n,m_0\}}\Big|\mu_m - \lim\limits_{\epsilon\rightarrow 0}\mu_m\Big| \le 6(\gamma+H_0),
\end{equation*}
together with \re{lim}, we obtain
\begin{equation*}
    \begin{split}
        \Big|\mu_n - \mu_m\Big| &\ge \Big|\lim\limits_{\epsilon\rightarrow 0}\mu_n-\lim\limits_{\epsilon\rightarrow 0}\mu_m\Big| - \Big|\mu_n - \lim\limits_{\epsilon\rightarrow 0}\mu_n\Big| - \Big|\mu_m-\lim\limits_{\epsilon\rightarrow 0}\mu_m\Big| \ge 6(\gamma+H_0) > 0.
    \end{split}
\end{equation*}

\vspace{5pt}

By combining all three cases, the assumptions (2) and (3) in Theorem \ref{CRthm} are satisfied when $\epsilon$ is sufficiently small, i.e., 
\beaa 
 && \text{Ker} \,\mathcal{F}_{\tilde{R}}(0,\mu_n) = \text{span}\{\cos(n\theta)\},\\
 && Y_1 = \text{span}\{1,\cos(\theta),\cdots,\cos((n-1)\theta),\cos((n+1)\theta),\cdots\}, \\
 && \text{and } Y_1 \mbox{$\bigoplus$} \text{Ker} = X^{l+\alpha}_1,
\eeaa
such that the spaces listed here (codimension space, non-tangential space) meet the requirements of the Crandall-Rabinowitz Theorem. To finish the whole proof, it remains to show the last assumption. Differentiating \re{Fre} in $\mu$, we derive
\begin{equation}\label{assump4}
    \begin{split}
     \left[\mathcal{F}_{\mu \tilde{R}}(0,\mu)\right] \cos(n\theta) &= \Big(
      \frac{\epsilon}{\gamma+H_0} + \epsilon^2\Big(\frac{\dif J_1}{\dif \mu} + \frac{\dif J_2^n }{\dif \mu}\Big)\Big)\cos(n\theta)\\
      &= \epsilon \Big(\frac{1}{\gamma+H_0} + \epsilon\Big(\frac{\dif J_1}{\dif \mu} + \frac{\dif J_2^n }{\dif \mu}\Big)\Big)\cos(n\theta).
      \end{split}
\end{equation}
By Lemma \ref{J1prop} and Lemma \ref{J2prop}, there exists a constant $C$ independent of $\epsilon$ and $n$ such that 
\begin{equation}\label{E3}
    \Big|\frac{\dif J_1}{\dif \mu} + \frac{\dif J_2^n }{\dif \mu}\Big| \le \Big|\frac{\dif J_1}{\dif \mu}\Big| +\Big| \frac{\dif J_2^n }{\dif \mu}\Big|\le C(n^2+1).
\end{equation}
Based on \re{E3}, we can find a bound $E_3$ (depending on $n$), such that when $\epsilon < E_3$, 
\begin{equation*}
    \frac{1}{\gamma+H_0} + \epsilon\Big(\frac{\dif J_1}{\dif \mu} + \frac{\dif J_2^n }{\dif \mu}\Big) >  \frac{1}{\gamma+H_0} - C E_3 (n^2+1)  > 0,
\end{equation*}
and hence $  \left[\mathcal{F}_{\mu \tilde{R}}(0,\mu)\right] \cos(n\theta) \not\in Y_1$,
i.e., the assumption (4) is satisfied.

Combining all pieces together, we take $E=\min\{\epsilon^*, E_1,E_2,E_3\}$, then we know that all the assumptions of the Crandall-Rabinowitz theorem are satisfied when $\epsilon\in (0,E)$. Hence we conclude that $\mu=\mu_n$ is a symmetry-breaking bifurcation point.
\end{proof}

\section{Appendix}
%\stepcounter{section}
\subsection{A supersolution}\label{app-super} As in \cite{FHHplaque}, we use the function $$\xi(r)=\frac{1-r^2}{4}+\frac12 \log r$$
a lot when we apply the maximum principle. Recall that $\xi$ satisfies
\begin{eqnarray*}
        &&-\Delta \xi = 1, \hspace{1em} \xi_r(r) = \frac{1-r^2}{2r}, \hspace{1em} \text{and } \hspace{1em} \xi(r)=O(\epsilon^2) \text{ when } 1-\epsilon < r < 1.
\end{eqnarray*}
Next we take 
$$c_1(\beta_1,\epsilon)= \frac{1}{\beta_1}\frac{\epsilon(2-\epsilon)}{2(1-\epsilon)}-\frac{\epsilon(2-\epsilon)}{4}-\frac12 \log(1-\epsilon)\equiv \frac{\epsilon}{\beta_1} + O(\epsilon^2),\hspace{1em}\text{and}\hspace{1em} c_2(\beta_1,\tau) = \frac{2}{\beta_1} |\tau|.$$
It is easy to verify that
\begin{equation}
    \Big[-\frac{\p \xi}{\p r} + \beta_1\Big(\xi+c_1(\beta_1,\epsilon)\Big)\Big]\Big|_{r=1-\epsilon}=\Big[-\frac{\p \Big(\xi+c_1(\beta_1,\epsilon)\Big)}{\p r} + \beta_1\Big(\xi+c_1(\beta_1,\epsilon)\Big)\Big]\Big|_{r=1-\epsilon}=0. \label{lem31-4}
\end{equation}
Using \re{lem31-4}, also recalling $\|S(\theta)\|_{C^{4+\alpha}(\Sigma)}\le 1$, we derive
\begin{equation*}
    \begin{split}
        &\bigg[\frac{\p \Big(\xi+c_1(\beta_1,\epsilon) + c_2(\beta_1, \tau)\Big)}{\p {\bm n}}+\beta_1\Big(\xi+c_1(\beta_1,\epsilon) + c_2(\beta_1, \tau)\Big)\bigg]\bigg|_{r=1-\epsilon+\tau S} \\
        =&\Big[-\frac{\p \xi}{\p r}\frac{1}{\sqrt{1+(\tau S')^2}} + \beta_1 \xi\Big]\Big|_{r=1-\epsilon+\tau S} + \beta_1 c_1(\beta_1,\epsilon) + \beta_1 c_2(\beta_1,\tau)\\
        =&\Big[-\frac{\p \xi}{\p r}+\beta_1 \Big(\xi+c_1(\beta_1,\epsilon)\Big)\Big]\Big|_{r=1-\epsilon} +\Big[-\frac{\p^2 \xi}{\p r^2}+\beta_1 \frac{\p \xi}{\p r}\Big]\Big|_{r=1-\epsilon}\tau S + 2|\tau| + O(|\tau S|^2) + O(|\tau S'|^2)\\
        =&0 + \Big[\frac{1+(1-\epsilon)^2}{2(1-\epsilon)^2} + \beta_1 \frac{1-(1-\epsilon)^2}{2(1-\epsilon)}\Big]\tau S + 2|\tau| +  O(|\tau S|^2) + O(|\tau S'|^2)\\
        =&(1+O(\epsilon))\tau S + 2|\tau| + O(|\tau S|^2) + O(|\tau S'|^2) > 0.
    \end{split}
\end{equation*}

\subsection{Transformation $T_\tau$}\label{app-2}
The transformation $T_\tau$
\begin{equation*}
    T_\tau: \tilde{r}=\frac{r-1}{2(\epsilon-\tau S(\theta))}+1, \quad \tilde{\theta}=\frac{\theta}{\epsilon}
\end{equation*}
maps $\Omega_\tau$ to a long stripe region $(\tilde{r},\tilde{\theta})\in [\frac12,1]\times[0,\frac{2\pi}{\epsilon}]$. Let $y$ satisfies
\begin{eqnarray}
    &-\Delta y =  -\frac1r\frac{\p}{\p r} \Big(r\frac{\p y}{\p r}\Big)-\frac1r\frac{\p }{\p \theta}\Big(\frac1r \frac{\p y}{\p \theta} \Big)= f(y) \hspace{2em} &\text{in } \Omega_\tau,
\end{eqnarray}
and set $\tilde{y}(\tilde{r},\tilde{\theta})=y(r,\theta)-y_0$. Obviously, $\tilde{y}$ should be $\frac{2\pi}{\epsilon}$-periodic in $\tilde{\theta}$. Using the chain rule, we obtain:
\begin{equation*}
    \begin{split}
        &\frac{\p }{\p r}=\frac{\p }{\p \tilde{r}}\frac{\p \tilde{r}}{\p r} = \frac{1}{2(\epsilon-\tau S)}\frac{\p }{\p \tilde{r}} = \Big(\frac{1}{2\epsilon} + O(\tau S)\Big)\frac{\p}{\p \tilde{r}}\, ,\\
        &\frac{\p }{\p \theta}=\frac{\p }{\p \tilde{r}}\frac{\p \tilde{r}}{\p \theta}+\frac{\p }{\p \tilde{\theta}}\frac{\p \tilde{\theta}}{\p \theta}=\frac{\tau S' }{\epsilon-\tau S}\Big(\tilde{r}-1\Big)\frac{\p }{\p \tilde{r}}  + \frac{1}{\epsilon} \frac{\p }{\p \tilde{\theta}}=O(\tau \|S\|_{C^1})\frac{\p}{\p \tilde{r}} + \frac{1}{\epsilon}\frac{\p }{\p \tilde{\theta}}\, .
        \end{split}
\end{equation*}
Hence we can write the equation of  $\tilde{y}(\tilde{r},\tilde{\theta})$ as
\begin{equation*}
     -\frac{\p }{\p \tilde{r}}\Big((1+ A_1) \frac{\p \tilde{y}}{\p \tilde{r}} + A_2 \frac{\p \tilde{y}}{\p \tilde{\theta}}\Big) - \frac{\p}{\p \tilde{\theta}} \Big(A_3\frac{\p \tilde{y}}{\p \tilde{r}} + (1+A_4) \frac{\p \tilde{y}}{\p \tilde{\theta}}\Big) +  A_5\frac{\p \tilde{y}}{\p \tilde{r}} + A_6\frac{\p \tilde{y}}{\p \tilde{\theta}} = \epsilon^2 \tilde{f}(\tilde{y}),
\end{equation*}
where $\tilde{f}(\tilde{y}) = r f(y)$, and $A_1,A_2,A_3,A_4,A_5,A_6 \sim O(\epsilon)$ are thus bounded. Furthermore, since $A_1,A_2,A_3,A_4$ contain at most first derivative of $S$, they are $C^\alpha$ if $S\in C^{1+\alpha}$.

\subsection{A continuation lemma}\label{appcon}
The next lemma concerns the continuation of estimates. The proof is standard and we omit the details.
\begin{lem} \label{con}
  Let $\{ \vec{Q}_\delta^{(i)}\}_{i=1}^M$ be a finite collection of real vectors, and define the norm of the vector by $|\vec{Q}_\delta|_{\max}=\max\limits_{1\le i \le M}|Q_\delta^{(i)}|$. Suppose that $0<C_1<C_2$, and
  
  (i) $|\vec{Q}_0|_{\max}\le C_1$;
  
  (ii) For any $0< \delta \le 1$, if $|\vec{Q}_\delta|_{\max}\le C_2$, then $|\vec{Q}_\delta|_{\max}\le C_1$;
  
  (iii) $\vec{Q}_\delta$ is continuous in $\delta$.
  
\noindent Then $|\vec{Q}_\delta|_{\max}\le C_1$ for all $0< \delta \le 1$.
\end{lem}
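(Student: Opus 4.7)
The plan is to run a standard connectedness (continuation) argument on the set
\[
 E \;=\; \{\delta \in [0,1] \;:\; |\vec{Q}_\delta|_{\max} \le C_1\},
\]
showing that $E$ is both (relatively) open and (relatively) closed in $[0,1]$, hence $E = [0,1]$, which is exactly the conclusion.

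First I would verify $E$ is non-empty: by hypothesis (i) we have $0 \in E$. Next I would check that $E$ is closed in $[0,1]$. Suppose $\delta_k \in E$ and $\delta_k \to \delta_\infty \in [0,1]$. Then $|\vec{Q}_{\delta_k}|_{\max} \le C_1$ for every $k$, and by the continuity hypothesis (iii) combined with the fact that $|\cdot|_{\max}$ is continuous as a map from the finite-dimensional vector space to $\mathbb{R}$, we pass to the limit to get $|\vec{Q}_{\delta_\infty}|_{\max} \le C_1$; thus $\delta_\infty \in E$.

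Then I would show $E$ is open in $[0,1]$. Fix $\delta_0 \in E$, so $|\vec{Q}_{\delta_0}|_{\max} \le C_1 < C_2$. By (iii), there is a (relative) neighborhood $U$ of $\delta_0$ in $[0,1]$ on which $|\vec{Q}_\delta|_{\max} < C_2$, in particular $|\vec{Q}_\delta|_{\max} \le C_2$. For every $\delta \in U$ with $0 < \delta \le 1$, hypothesis (ii) now yields $|\vec{Q}_\delta|_{\max} \le C_1$; and for the one possible exceptional point $\delta = 0$ in $U$, membership in $E$ is already guaranteed by (i). Hence $U \subset E$.

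Since $[0,1]$ is connected, and $E$ is a non-empty, open and closed subset, we conclude $E = [0,1]$, which gives $|\vec{Q}_\delta|_{\max} \le C_1$ for all $\delta \in [0,1]$, a fortiori for all $0 < \delta \le 1$. There is really no hard step here; the only subtlety to be a little careful about is the boundary point $\delta = 0$ in the openness step, where hypothesis (ii) formally excludes $\delta = 0$ and must be replaced by hypothesis (i) to conclude inclusion in $E$. This is why the statement is essentially a one-paragraph topological argument and the authors elect to omit the details.
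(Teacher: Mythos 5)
Your argument is correct and is precisely the standard open--closed connectedness argument that the paper has in mind when it writes ``the proof is standard and we omit the details.'' You correctly identify the one subtlety (hypothesis (ii) excludes $\delta=0$, which must be covered instead by (i) in the openness step), and the use of $C_1<C_2$ to get a buffer via continuity is exactly the point of the lemma.
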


\begin{rem}
If the finite collection is replaced by an infinite collection, then
(iii) will need to be replaced by ``uniform continuity" in $\delta$.
\end{rem}

\bigskip

\end{document}